\newtheorem{thm}{Theorem}[section]
\newtheorem{lem}[thm]{Lemma}
\newtheorem{pro}[thm]{Proposition}
\newtheorem{cor}[thm]{Corollary}
\newtheorem{eg}[thm]{Example}
\newcommand{\dom}{\text{dom}}
\newcommand{\bea}{\begin{eqnarray*}}
\newcommand{\eea}{\end{eqnarray*}}
\newcommand{\ben}{\begin{enumerate}}
\newcommand{\een}{\end{enumerate}}
\newcommand{\bi}{\begin{itemize}}
\newcommand{\ei}{\end{itemize}}
\newcommand{\ol}{\overline}
\newcommand{\mc}{\mathcal}
\begin{document}

\title{Restriction in program algebra}
\author{Marcel Jackson}
\author{Tim Stokes}


\maketitle

\section{Introduction}  \label{intro}

Let $P(X,Y)$ be the set of all partial functions having domains in the non-empty set $X$ and mapping into non-empty $Y$.  In Berendsen et al.~\cite{minusover}, the operations of \emph{override} and \emph{update} for such partial functions were considered.  Override was defined there as follows: for all $f,g\in P(X,Y)$, 
\[
(f \sqcup g)(x):=\begin{cases} f(x)&\mbox{ if }x\in \dom(f),\\
g(x)&\mbox{ if } x\in \dom(g)\backslash\dom(f),\\
\mbox{ undefined}&\mbox{ otherwise}.
\end{cases}
\]
The operation of update $f\diamond g$ is then defined in \cite{minusover} to be the restriction of $g\sqcup f$ to the domain of $f$.  

We also consider the operations of \emph{domain restriction}, $\circ$ given by: 
\[
(f\circ g)(x)= \begin{cases} g(x)&\mbox{ if }x\in \dom(f),\\
\mbox{undefined}&\mbox{ otherwise},
\end{cases}
\]
\emph{minus}, $-$, given by:
\[
(f-g)(x)=\begin{cases} f(x)&\mbox{ if }x\in \dom(f), x\not\in \dom(g),\\
\mbox{undefined}&\mbox{ otherwise},
\end{cases}
\]
\emph{intersection}, $\cap$:
\[
f\cap g= \{(x,y)\in X\times Y\mid (x,y)\in f\text{ and }(x,y)\in g\}
\]
and \emph{difference}, $\backslash$:
\[
f\backslash g= \{(x,y)\in X\times Y\mid (x,y)\in f\text{ and }(x,y)\not\in g\}.
\]
Before we proceed further, we note that the surrounding literature contains  a number of conflicting notations for the operations just introduced.  
In \cite{minusover},  the operation we have denoted $\circ$ is referred to in an equivalent form as ``intersection", with definition and notation $f@g:=g\circ f$.  The notation for~$\sqcup$ in \cite{minusover} is $\triangleright$, however, the notation $\triangleright$ is well-established for the operation $\circ$ since at least the 1962 work of Vagner \cite{vagner} where it is called restrictive multiplication.  As the present work relates to both~\cite{minusover} and~\cite{vagner}, we have adopted the neutral operation $\circ$ to avoid confusion; this was also done in the second author's work with Hirsch~\cite{H+S}.  We use the term ``domain restriction" (as in Borlido and McLean \cite{borlido}) for $\circ$, since it is more suggestive of the meaning of the operation than ``restrictive multiplication", and because the term ``intersection" of functions has another accepted meaning, namely that used above, and widely encountered since at least the work of Garvac'ki\u{\i}~\cite{garv}.  Our adopted notation $\sqcup$ for override follows \cite{modrest} (contemporaneous with \cite{minusover}), though it is called  \emph{preferential union} there; $\sqcup$ was also used by the authors in \cite{jacstoverup}.  Finally we use the notation $\diamond$ here for update because it is  simpler to specify in signatures; the notation in~\cite{minusover}, and even in the authors' work \cite{jacstoverup}, is $f[g]$, in place of $f\diamond g$.

The algebras of partial functions considered in \cite{minusover} were those closed under override and minus, and a finite equational axiomatisation was given that was shown complete for equations. It was subsequently observed in Cvetko-Vah, Leech and Spinks \cite{CLS} that these algebras are equivalent to a certain type of skew Boolean algebra previously considered by Leech in \cite{LeechSBA}, where a complete axiomatisation of the algebras was given.  The axioms were equivalent to those given in \cite{minusover}, thereby showing those laws to be strongly complete and not just equationally complete.  In the present article, we take strongly complete as the primary notion of completeness, since in general, the isomorphism class of functional models is a quasivariety always, and a variety only sometimes.  When completeness is established for equations only, we refer to \emph{complete with respect to equations}.

Closely related signatures have also been considered.  Thus in \cite{LeechNSL}, Leech shows that algebras of partial functions closed under domain restriction and override may be axiomatised as certain types of skew lattices; see $3.7$ there, which explains how to view Theorem $3.2$ there as providing such an axiomatisation.    
 Amongst signatures not including override, that consisting of domain restriction and difference has recently been considered in \cite{borlido}, where a finite equational axiomatisation is given.

Note that a number of these operations may be defined in terms of the others; for example: 
\bi
\item $f\circ g=g-(g-f)$ (as noted in \cite{minusover});
\item $f\diamond g=f\circ(g\sqcup f)$ (also as noted in \cite{minusover});
\item $f\circ g=(f\diamond g)\cap g$;
\item $f\cap g=f\backslash(g\backslash f)$;
\item $f-g=f\backslash (g\circ f)$;
\item $f\backslash g=f-(f\cap g)$.
\ei
Hence $\circ$ is expressible in any signature containing both $\diamond$ and $\cap$, and $\diamond$ is expressible in any signature containing both $\circ$ and $\sqcup$.  So the following signatures are equivalent in expressive power:
$$(\diamond,\sqcup,\cap),\ (\circ,\sqcup,\cap),$$
and similarly so are 
$$(\backslash,\circ),\ (-,\cap)$$
as well as
$$(\backslash,\sqcup),\ (-,\cap,\sqcup).$$
In every case above, $\circ$ is expressible.  Other less expressive cases we consider in which $\circ$ is either in the signature or definable within it are as follows:
$$(-),\ (-,\diamond),\ (\circ), (\circ,\cap),\ (\diamond,\cap).$$

The signature $(\backslash,\sqcup)$ can be viewed as a kind of ``master signature", since all the operations considered here can be expressed within it.  Written directly, we have
$$f-g=(g\sqcup f)\backslash g,\ f\cap g=f\backslash (f\backslash g),\ f\circ g= g\backslash ((f\sqcup g)\backslash f),\ f\diamond g=(g\sqcup f)\backslash [(f\sqcup g)\backslash f].$$
This signature is axiomatised by Cirulis \cite{cirulis} (see Theorem $5.6$ there), an alternative description in terms of skew Boolean intersection algebras being given in \cite{CLS}, and we give yet another description of it in Section \ref{sec:axioms2}.  A further axiomatisation is given by Bignall and Leech~\cite{bigleech}, where the class is shown to be the variety generated by the pointed discriminator algebras, which is the class of pointed comparison algebras as defined in Stokes~\cite{compsemi}.  

However, aside from $(-,\sqcup)$ and $(\backslash,\circ)$, weaker signatures such as $(\circ,\cap,\sqcup)$ have mostly not been axiomatised; as is pointed out in \cite{CLS}, this latter signature gives rise to a certain class of skew lattices with intersection.  In this article we axiomatise this case, but also recover known results such as those in \cite{borlido} and \cite{cirulis}, and obtain new results for a large range of combinations of the operations previously discussed.  In addition, our methods make possible the modelling of {\em program concatenation}, corresponding to composition of partial functions; in this case, we assume $Y=X$.  Indeed our approach is to model the case with composition first, then recover the composition-free case for $P(X,Y)$ from it.

For two equivalent signatures, it suffices to finitely axiomatise one in order to establish a finite axiomatisation of the other: simply translate the needed laws using the above relations, and add in as additional laws the relations used in the translation.  All the axiomatisations obtained in what follows are finite, and most are equational, though some are properly quasiequational.  Since $\circ$ is expressible in every signature, we develop an approach based on it.  

The paper is organised as follows.  In Section \ref{sec:axioms} to follow, we introduce each of the classes of algebras considered, giving axioms for them which we claim are sound in each case, and in later sections show are complete.  The signatures come in pairs: one without composition and one with, the latter obtained from the former by the addition of some extra axioms involving composition.  Following that, in Section \ref{sec:filters}, we develop some important preliminary facts involving certain fundamental quasiorders and the associated filters as well as certain equivalence relations, all of which is done at the ``composition-free" level.  Then in Section \ref{sec:rep}, we establish completeness of the axioms given in Section \ref{sec:axioms} by using a single unified approach to represent algebras in each class, applying it to each in turn to show each operation in the relevant signature is correctly represented.  In Section \ref{sec:axioms2}, we tidy up some of the earlier axiomatisations, showing that some quasivarieties are finitely based varieties, or are not varieties at all.  We then present a table in which the axiomatisation status of a wide range of signatures is given.  We conclude the main body of the article with some open questions in Section \ref{sec:open}.  Included after that is some supplementary material in which simplifications of our various axiom sets are obtained using the {\em Prover9/Mace4} software \cite{P9M4}.

Throughout, for a given signature consisting of some of the operations just described, we use the term ``algebra of partial functions" of that signature to refer to a set of partial functions in $P(X,Y)$ (or $PT(X)$ if  composition is part of the signature) closed under the operations in the signature.  When we speak of the algebras of partial functions of some given signature as being axiomatised as a class of axiomatically defined algebraic structures, we of course mean that the isomorphism class of such algebras is that given class.

\section{The axioms}  \label{sec:axioms}

\subsection{Right normal bands and 1-stacks}

Recall that a {\em right normal band} $(A,\circ)$ is a semigroup satisfying the following: for all $x,y,z\in A$,
\bi
\item $x\circ x=x$ (so $(A,\circ)$ is a band), and
\item $(x\circ y)\circ z=(y\circ x)\circ z$.
\ei 
We say the right normal band has a {\em zero} if it has a zero as a semigroup; sufficient for this is that $0\circ x=0$ for all $x$, since if this holds then $x\circ 0=x\circ (0\circ x)=(x\circ 0)\circ x=(0\circ x)\circ x=0\circ x=0$.  We can append a new element $0$ to any right normal band $(A,\circ)$, defining $s0=0s=0$ for all $s\in A$ to obtain a semigroup with zero which is clearly a right normal band with zero; denote this by $A^0$.

It is easy to  verify that $P(X,Y)$ is a right normal band under the domain restriction operation~$\circ$ (and indeed it has zero the empty function), whence so is any subsemigroup of it; we call such examples that lie inside $P(X,Y)$ in this way {\em functional}.  It was shown in Vagner~\cite{vagner} that every right normal band is isomorphic to a functional one.  A proof of this is quite straightforward, and we shortly give a proof which also applies to the richer structures we are interested in here.

%

The domain restriction operation $\circ$ is defined for partial functions in $P(X,Y)$.  To model composition $\cdot$ as well, we assume $Y=X$.  It is easy to check that the following laws hold on $PT(X)=P(X,X)$ equipped with domain restriction and composition (where we write $\cdot$ as concatenation):
\ben
\item  the right normal band laws for $\circ$;
\item  associativity of composition;
\item  for all $a,b,c$, $a\circ (bc)=(a\circ b)c$, $a(b\circ c)=(ab)\circ (ac)$.
\een

These laws define the class of {\em 1-stacks}.  The particular axioms just given appear in Schein~\cite{schein}, where it is noted that they axiomatise algebras of partial functions under composition and domain restriction.

We say the 1-stack $(A,\circ,\cdot)$ {\em has zero} if there is $0\in A$ which is a zero with respect to both operations.

\begin{pro}  \label{stackembed0}  
Every right normal band with zero $(A,\circ)$ can be made into a 1-stack with zero by setting $st=0$ for all $s,t\in A$.
\end{pro}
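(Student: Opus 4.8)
The plan is to verify directly that the structure $(A,\circ,\cdot)$, with $\cdot$ given by the constant rule $st=0$, satisfies all three groups of 1-stack axioms, and then to check the ``has zero'' clause. Axiom group (1), the right normal band laws for $\circ$, holds by hypothesis, so nothing is needed there. For the associativity of composition in (2), both sides collapse to $0$ straight from the definition: $ab=0$ gives $(ab)c=0\cdot c=0$, and $bc=0$ gives $a(bc)=a\cdot 0=0$.

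For the mixed laws in (3), the key observation is that every product under $\cdot$ is $0$, and $0$ is a genuine (two-sided) zero for $\circ$ by assumption. Thus $a\circ(bc)=a\circ 0=0$, while $(a\circ b)c=0$ from the definition of composition, so the two sides agree; and $a(b\circ c)=0$, while $(ab)\circ(ac)=0\circ 0=0$, so again the two sides agree. Finally, $0$ is a zero for $\cdot$ trivially, since $0t=0=s0$ are instances of $st=0$, and $0$ is a zero for $\circ$ by hypothesis; hence the resulting 1-stack has zero.

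There is essentially no obstacle here; the verification is immediate. The only point worth a moment's care is that the identities in (3) genuinely use the fact that $0$ is absorbing under $\circ$ on \emph{both} sides (to evaluate $a\circ 0$), rather than merely a one-sided zero -- but this is exactly what the standing hypothesis that $(A,\circ)$ has a zero provides (and, as noted in the excerpt, $0\circ x=0$ for all $x$ already forces $x\circ 0=0$ in a right normal band).
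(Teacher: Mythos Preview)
Your proof is correct and follows essentially the same approach as the paper's own proof, which also proceeds by direct verification of the 1-stack axioms (the paper's version is terser, singling out only the two identities in (3) as needing checking and noting that $0$ is trivially a zero for $\cdot$). Your extra remark about needing two-sided absorption of $0$ under $\circ$ is a nice point of care, but does not diverge from the paper's argument.
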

\begin{proof}
With the above definition of $\cdot$, an easy case analysis verifies that the two identities appearing in (3) above both hold, and $0$ is obviously a zero with respect to $\cdot$. 
\end{proof}

\begin{cor}  \label{stackembed}  
Every right normal band $(A,\circ)$ embeds in the right normal band reduct of a 1-stack with zero.
\end{cor}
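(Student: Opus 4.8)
The plan is to bootstrap off Proposition~\ref{stackembed0} via the ``append a zero'' construction $A\mapsto A^0$ described just before it. Given an arbitrary right normal band $(A,\circ)$, the first step is to pass to $A^0$, obtained by adjoining a new element $0$ with $s0=0s=0$ for all $s$. As already noted in the text preceding Proposition~\ref{stackembed0}, $A^0$ is again a right normal band, and it has $0$ as a (semigroup) zero; so $A^0$ is a right normal band \emph{with} zero.

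The second step is simply to invoke Proposition~\ref{stackembed0}: define a composition $\cdot$ on $A^0$ by $st=0$ for all $s,t\in A^0$. That proposition tells us $(A^0,\circ,\cdot)$ is a $1$-stack with zero. Its $\circ$-reduct is, by construction, exactly the right normal band $A^0$.

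The final step is to check that $A$ embeds into this $\circ$-reduct. But $A\subseteq A^0$ as a set, and the restriction of the $\circ$ of $A^0$ to $A$ is the original $\circ$ of $A$, since the only products in $A^0$ that differ from those in $A$ are the ones involving the new element $0$, which does not lie in $A$. Hence the inclusion map $A\hookrightarrow A^0$ is an injective homomorphism of right normal bands onto a subalgebra of the $\circ$-reduct of the $1$-stack with zero $(A^0,\circ,\cdot)$, which is what we want. There is essentially no obstacle here: the only thing to verify is that $A$ is a $\circ$-subsemigroup of $A^0$, and this is immediate from the definition of $A^0$. (If one is worried about the degenerate case where $A$ is empty, the statement is vacuous, or one may simply note that $A^0$ is then the one-element $1$-stack.)
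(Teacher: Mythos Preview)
Your proposal is correct and follows exactly the paper's approach: adjoin a zero to form $A^0$, apply Proposition~\ref{stackembed0}, and observe that the inclusion $A\hookrightarrow A^0$ is a $\circ$-embedding. The paper's proof is terser but identical in substance.
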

\begin{proof}
Append $0$ to $(A,\circ)$ to form the right normal band with zero $A^0$ and then apply the previous result.
\end{proof}

From now on, if it is part of the signature, the semigroup operation intended to model composition will take notational precedence over all other operations.  For example the law $a(b\circ c)=(ab)\circ (ac)$ can be written unambiguously as $a(b\circ c)=ab\circ ac$, and so on.

\subsection{Adding intersection}  \label{cap}

Algebras of partial functions closed under intersection are abstractly nothing but semilattices.  But intersection of partial functions is also relatively easily modelled within the setting of right normal bands.  Let us say that an algebra $(A,\circ,\cap)$ equipped with two binary operations is a {\em right normal band with intersection} if $(A,\circ)$ is a right normal band, $(A,\cap)$ is a semilattice, and for all $x,y,z\in A$:
\bi
\item $(x\cap y)\circ x=x\cap y$;
\item $x\circ (y\cap z)=(x\circ y)\cap z$.
\ei

Again, it is routine to check that $P(X,Y)$ is a right normal band with intersection when equipped with domain restriction and intersection.   

Full left distributivity is an easy consequence of the right normal band with intersection laws:
\[
(x\circ y)\cap (x\circ z)=x\circ (y\cap (x\circ z))=x\circ ((x\circ z)\cap y)=x\circ(x\circ (z\cap y))=x\circ (y\cap z).
\]

We introduce composition to the signature.  We say $(A,\cdot,\circ,\cap)$ is a {\em 1-stack with intersection} if 
\ben
\item $(A,\cdot,\circ)$ is a 1-stack,
\item $(A,\circ,\cap)$ is a right normal band with intersection, and
\item for all $s,t,u\in A$, $s(t\cap u)=st\cap su$.
\een

The final law is known to be sound for the functional signature $(\cdot,\cap)$  (see \cite{garv}).  Hence any algebra of functions of signature $(\cdot,\circ,\cap)$ is certainly a 1-stack with intersection.

\begin{pro}  \label{capembed}
Every right normal band with intersection embeds as a reduct in a 1-stack with intersection.
\end{pro}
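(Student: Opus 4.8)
The plan is to follow exactly the route used for Corollary~\ref{stackembed}: first reduce to the case of structures with zero, and then make composition trivial by declaring every product to be $0$.

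First I would append a new element $0$ to $(A,\circ,\cap)$, extending $\circ$ by $0s=s0=0$ and $\cap$ by $0\cap s=s\cap 0=0$ for all $s\in A$; call the result $A^0$. By the remarks preceding Proposition~\ref{stackembed0}, $(A^0,\circ)$ is a right normal band with zero, and in particular $s\circ 0=0$ for all $s$; and clearly $(A^0,\cap)$ is a semilattice with $0$ as absorbing element. It then remains to check that the two right-normal-band-with-intersection laws, namely $(x\cap y)\circ x=x\cap y$ and $x\circ(y\cap z)=(x\circ y)\cap z$, survive the extension. This is a short case analysis on which of the arguments equal the new $0$: in every case in which $0$ occurs as an argument, both sides collapse to $0$, using $s\circ 0=0$, $0\circ s=0$ and the absorption property of $0$ for $\cap$. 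Hence $A^0$ is a right normal band with intersection possessing a zero.

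Next, set $st:=0$ for all $s,t\in A^0$. By Proposition~\ref{stackembed0}, $(A^0,\circ,\cdot)$ is a 1-stack with zero, and the one remaining axiom needed for a 1-stack with intersection, $s(t\cap u)=st\cap su$, holds trivially since both sides equal $0$. Thus $(A^0,\cdot,\circ,\cap)$ is a 1-stack with intersection, and the inclusion map $A\hookrightarrow A^0$ exhibits $(A,\circ,\cap)$ as the $(\circ,\cap)$-reduct of a 1-stack with intersection, as required. There is no real obstacle here beyond bookkeeping; the only point needing a little care is the case analysis confirming that the two intersection laws remain valid after adjoining $0$, together with the observation that $s\circ 0=0$ is already forced by the right normal band with zero axioms and so need not be separately stipulated.
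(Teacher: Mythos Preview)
Your proposal is correct and follows essentially the same route as the paper: adjoin a zero to obtain $A^0$, extend $\cap$ so that $0$ is absorbing, define all composition products to be $0$, and then verify by case analysis that the resulting structure is a 1-stack with intersection containing $(A,\circ,\cap)$ as a subreduct. The paper's proof is terser but identical in content, and your extra remark that $s\circ 0=0$ is already forced (rather than a separate stipulation) is a nice clarification.
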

\begin{proof}
Given the right normal band with intersection $(A,\circ,\cap)$, adjoin a zero element to give $(A^0,\circ)$, giving a 1-stack with all products zero as in the proof of Proposition \ref{stackembed0}. Also extend the definition of $\cap$ to $A'$ by defining $s\cap 0=0=0\cap s$ for all $s\in A'$.  Using case analyses, the result is easily seen to be a 1-stack with intersection in which $(A,\circ,\cap)$ embeds as a subreduct.
\end{proof}

For each of the sets of axioms to follow involving minus, given in Subsections \ref{subsec:minus}, \ref{sub:minusover} and~\ref{subsec:minusup}, we add intersection to the signature by simply adding in the 1-stack with intersection or right normal band with intersection laws, and we refer to such an enhanced algebra as ``with intersection" in all such cases.

\subsection{Adding minus}\label{subsec:minus}

We say $(A,\circ,-)$ is a {\em minus-algebra} if the following laws hold:
\bi
\item $x\circ y=y-(y-x)$;
\item $(A,\circ)$ is a right normal band;
\item $x-x=0$ (this effectively defines $0$);
\item $x\circ 0=0\circ x=0$;
\item $(x-y)\circ x=x-y$;
\item $(x-y)\circ y=0$;
\item $(x-y)\circ z=(x\circ z)-y$;
\item $s-x=t-x\And x\circ s =x\circ t\Rightarrow s=t$.
\ei
(As in other cases to follow in this section, we return to the question of more elegant axioms later.)  
Fix the minus-algebra $(A,-)$ for the remainder of this section.

As in previous cases, we can add composition to the signature.  We say $(A,\cdot,-)$ is a {\em minus-semigroup} if
\ben
\item $(A,-)$ is a minus-algebra,
\item $(A,\cdot)$ is a semigroup,
\item $s(t-u)=st-su$ for all $s,t,u\in A$.
\een

All of these laws are functionally sound, and most follow easily from Figure 3 in \cite{minusover}.  In particular, the final quasiequational law for minus-algebras states that two functions $s,t$ that agree both on the domain of $x$ and outside of the domain of $x$ must be equal.  Law (3) above involving composition can easily be checked by case analysis (both sides being subsets of $st$).

Since $0$ is already in the signature of a minus-algebra, we do not need to adjoin one.

\begin{pro} \label{minusnice}
Every minus-algebra is a minus-semigroup if we define $st=0$ for all $s,t$.
\end{pro}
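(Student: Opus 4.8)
The plan is to verify that the three defining axioms for a minus-semigroup hold when we set $st = 0$ for all $s,t$ in a minus-algebra $(A,\circ,-)$. Axiom (1) is immediate: $(A,-)$ is a minus-algebra by hypothesis. Axiom (2) requires $(A,\cdot)$ to be a semigroup, which is trivial for the constant-$0$ multiplication since $(st)u = 0 = s(tu)$. So the only real content is axiom (3), the law $s(t-u) = st - su$.

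For axiom (3), I would compute both sides. The left-hand side is $s(t-u) = 0$ by definition of the product. For the right-hand side, $st - su = 0 - 0$, so it suffices to show $0 - 0 = 0$ in any minus-algebra. This follows from the minus-algebra axiom $x - x = 0$ applied with $x = 0$. Hence both sides equal $0$, establishing (3).

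I would also double-check that the zero $0$ supplied by the minus-algebra signature (defined via $x - x = 0$) indeed behaves as a zero for the new multiplication: $s \cdot 0 = 0 = 0 \cdot s$ holds trivially since every product is $0$. So no separate adjunction of a zero is needed, consistent with the remark preceding the statement.

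I do not anticipate any genuine obstacle here; this is the minus-algebra analogue of Propositions \ref{stackembed0} and \ref{capembed}, and the verification is a one-line case analysis in each instance (everything in sight collapses to $0$). The only point deserving care is that the relevant identities used — $x - x = 0$ — are genuinely part of the minus-algebra axioms rather than derived facts, which they are. One could remark in passing that this yields, via the construction, a functional representation for minus-algebras as a corollary once the representation theorem for minus-semigroups is available in Section \ref{sec:rep}.
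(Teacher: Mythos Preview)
Your proposal is correct and follows essentially the same approach as the paper: verify associativity trivially, then check $s(t-u)=st-su$ by observing both sides equal $0$ since $0-0=0$ (from the axiom $x-x=0$). The paper's proof is the same one-liner, noting that all other laws persist because no new elements are introduced.
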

\begin{proof} 
Let $(A,-)$ be a minus-algebra.  
Defining $s\cdot t=0$ for all $s,t\in A$ gives a semigroup $(A,\cdot)$, and because $0-0=0$, $(A,-,\cdot)$ satisfies the law $s(t-u)=(st)-(su)$ for all $s,t,u\in A$.  All other laws remain satisfied since no new elements have been introduced.
\end{proof}

It is now trivial to add $\cap$ to the signature of both minus-algebras and minus-semigroups, using finitely many more equations as in Subsection \ref{cap}.  

\subsection{{\em Minus} and override}  \label{sub:minusover}

We next turn to the signature considered in \cite{minusover}, in which update can be defined as a term.  A finite equational axiomatisation was given there that was shown complete for equations.  The associated class of algebras was subsequently shown in \cite{CLS} to be equivalent to certain types of skew Boolean algebras (SBAs) previously considered by Leech in \cite{LeechSBA}, and because of this the axiomatisation was established as complete.  We here present a third axiomatisation, but go further by introducing function composition to the signature as well.

We say an algebra $(A,-,\sqcup)$ is a {\em minus-algebra with override} if it is a minus-algebra that additionally satisfies the following laws:
\bi
\item $(x\sqcup y)-x=y-x$;
\item $x\circ(x\sqcup y)=x$.
\ei

It is straightforward to check that override in $P(X,Y)$ satisfies these two laws.  

We say an algebra $(A,-,\sqcup,\cdot)$ is a {\em minus-semigroup with override} if:
\ben
\item $(A,-,\cdot)$ is a minus-semigroup; and
\item $(A,-,\sqcup)$ is a minus-algebra with override.
\een

Again, it is routine to verify that $(PT(X),-,\sqcup,\cdot)$ is a minus-semigroup with override, whence so is any subalgebra.

We have our usual result, which follows from Proposition \ref{minusnice}.

\begin{pro} \label{minusovernice}
Every minus-algebra with override is a minus-semigroup with override if we define $st=0$ for all $s,t$.
\end{pro}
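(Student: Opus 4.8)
The plan is to reduce everything to Proposition \ref{minusnice}. Let $(A,-,\sqcup)$ be a minus-algebra with override, and define $st:=0$ for all $s,t\in A$, where $0$ is the distinguished element satisfying $x-x=0$. To verify that $(A,-,\sqcup,\cdot)$ is a minus-semigroup with override, I must check the two clauses of the definition: that $(A,-,\cdot)$ is a minus-semigroup, and that $(A,-,\sqcup)$ is a minus-algebra with override. The second clause is immediate from the hypothesis, since enlarging the signature by the operation $\cdot$ does not affect the validity of the two override laws $(x\sqcup y)-x=y-x$ and $x\circ(x\sqcup y)=x$, neither of which mentions $\cdot$; here $\circ$ is the derived operation $x\circ y=y-(y-x)$, which likewise does not involve composition.

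For the first clause, note that a minus-algebra with override is in particular a minus-algebra, so Proposition \ref{minusnice} applies directly: the very same definition $st=0$ turns $(A,-)$ into a minus-semigroup $(A,-,\cdot)$, the composition law $s(t-u)=st-su$ holding because $0-0=0$. Since the definition of $\cdot$ here coincides with the one used in Proposition \ref{minusnice} and the zero element is the same, the two clauses are compatible and the result follows. I expect no genuine obstacle: the only point worth recording is that no new elements are introduced, so every law valid in $(A,-,\sqcup)$ — the override laws included — remains valid after the extension.
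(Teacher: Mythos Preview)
Your proposal is correct and follows exactly the paper's approach: the paper's proof consists solely of the remark that the result follows from Proposition \ref{minusnice}, and your argument is a careful unpacking of precisely that reduction.
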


\subsection{{\em Minus} and update}  \label{subsec:minusup}

We say $(A,-,\diamond)$ is a {\em minus-algebra with update} if
\ben
\item $(A,-)$ is a  minus-algebra,
\item $(x\diamond y)-x=x-(x\diamond y)=0$,   
\item $((x\diamond y)-y)\circ x=(x\diamond y)-y$, and 
\item $x\circ y=y\circ (x\diamond y)$.  
\een

We say $(A,\cdot,-,\diamond)$ is a {\em minus-semigroup with update} if
\ben
\item $(A,-,\diamond)$ is a minus-algebra with update, and
\item  $(A,\cdot,-)$ is a minus-semigroup.
\een

Again, the usual functional models satisfy these various laws, establishing soundness, and again from Proposition \ref{minusnice} we obtain the following.

\begin{pro} \label{minusupdatenice}
Every minus-algebra with update is a minus-semigroup with update if we define $st=0$ for all $s,t$.
\end{pro}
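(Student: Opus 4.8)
The plan is to reduce everything to Proposition \ref{minusnice}, since the update axioms are insulated from composition. Fix a minus-algebra with update $(A,-,\diamond)$ and define $st=0$ for all $s,t\in A$. To see that $(A,\cdot,-,\diamond)$ is a minus-semigroup with update, I must verify the two defining clauses: that $(A,\cdot,-)$ is a minus-semigroup, and that $(A,-,\diamond)$ is a minus-algebra with update.

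First I would dispatch the minus-semigroup clause by quoting Proposition \ref{minusnice} verbatim: $(A,-)$ is a minus-algebra by hypothesis, the zero product makes $(A,\cdot)$ a semigroup, the distributive law $s(t-u)=st-su$ collapses to $0=0-0$ which holds because $0-0=0$ in any minus-algebra, and every remaining minus-semigroup axiom is inherited because no new elements have been adjoined. This is precisely the argument already used for Propositions \ref{minusnice} and \ref{minusovernice}.

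Next I would observe that the minus-algebra-with-update clause is immediate. The conditions (1)--(4) of Subsection \ref{subsec:minusup} mention only $-$, the derived operation $\circ$, and $\diamond$; none of them refers to the semigroup operation. Since the underlying set and these operations are unchanged from the given minus-algebra with update, and that structure was assumed to satisfy (1)--(4), the same equations still hold.

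There is really no obstacle here; the only point worth flagging, exactly as in the earlier ``nice'' propositions, is that declaring all products to be $0$ cannot invalidate any axiom because it introduces no new elements and the one composition law in play degenerates to the triviality $0=0-0$. Hence $(A,\cdot,-,\diamond)$ is a minus-semigroup with update, as claimed.
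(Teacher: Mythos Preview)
Your proposal is correct and follows exactly the paper's approach: the paper simply states that the result follows from Proposition~\ref{minusnice}, and you have spelled out precisely that reduction together with the observation that the update axioms do not involve composition.
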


\subsection{Intersection and override}  \label{sub:intover}

In previous cases, we could simply add ``with intersection" to each set of axioms by just adding the relevant axioms from Subsection \ref{cap} to an existing axiom set.  In this signature and the one after, that is no longer possible: using our methods, we  cannot obtain axioms for domain restriction together with either just override or just update without first assuming intersection is present as well.

We say the algebra $(A,\circ,\cap,\sqcup)$ is a {\em right normal band with intersection and override} if the following laws are satisfied:
\ben
\item $(A,\circ,\cap)$ is a right normal band with intersection;
\item $s\circ (s\sqcup t)=s$;
\item $((s\sqcup t)\cap t)\sqcup s=s\sqcup t$;
\item $(s\sqcup t)\circ u=(s\circ u)\sqcup (t\circ u)$.
\een

As usual, it is a routine process to verify that $P(X,Y)$ equipped with domain restriction, intersection and override is a right normal band with intersection and override (and hence so are all its subalgebras).  

We add composition to the signature.  We say the algebra $(A,\circ,\cap,\sqcup,\cdot)$ is a {\em 1-stack with intersection and override} if the following laws are satisfied:
\ben
\item $(A,\circ,\cap)$ is a right normal band with intersection and override;
\item $(A,\cdot,\circ,\cap)$ is a 1-stack with intersection;
\item for all $s,t,u\in A$, $s(t\sqcup u)=st\sqcup su$.
\een

\begin{pro}  \label{overintnicembed}
Every right normal band with intersection and override embeds in a reduct of a 1-stack with intersection and override.
\end{pro}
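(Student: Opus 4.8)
The plan is to mimic the constructions already used in Proposition~\ref{stackembed0}, Corollary~\ref{stackembed} and Proposition~\ref{capembed}: given a right normal band with intersection and override $(A,\circ,\cap,\sqcup)$, adjoin a fresh element $0$ to form $A^0$, and equip $A^0$ with the degenerate composition $st=0$ for all $s,t\in A^0$. The operations $\circ$ and $\cap$ are extended to $A^0$ exactly as in the proof of Proposition~\ref{capembed}, by making $0$ absorbing: $s\circ 0=0\circ s=0$ and $s\cap 0=0\cap s=0$ for all $s\in A^0$. The one genuinely new decision is how to extend $\sqcup$, and here $0$ must be treated as an \emph{identity} rather than as a zero (reflecting that $\emptyset\sqcup f=f=f\sqcup\emptyset$ for partial functions): we set $s\sqcup 0=0\sqcup s=s$ for all $s\in A^0$, so in particular $0\sqcup 0=0$.

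It then remains to check that $(A^0,\circ,\cap,\sqcup,\cdot)$ is a 1-stack with intersection and override and that $(A,\circ,\cap,\sqcup)$ sits inside its reduct as a subalgebra. The latter is immediate, since $A$ is closed under $\circ,\cap,\sqcup$ in $A^0$ and these restricted operations coincide with the originals. For the former, the facts that $(A^0,\cdot,\circ,\cap)$ is a 1-stack with intersection and that $(A^0,\circ,\cap)$ is a right normal band with intersection are already supplied by the construction in the proof of Proposition~\ref{capembed}, and the composition law $s(t\sqcup u)=st\sqcup su$ holds trivially because both sides equal $0$ (using $0\sqcup 0=0$). So the substance is to verify the three override axioms $s\circ(s\sqcup t)=s$, $((s\sqcup t)\cap t)\sqcup s=s\sqcup t$ and $(s\sqcup t)\circ u=(s\circ u)\sqcup(t\circ u)$ for all $s,t,u\in A^0$. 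When none of the arguments is $0$ these are the hypotheses on $A$; when one or more equals $0$ one runs a short case analysis, repeatedly using that $0$ is absorbing for $\circ$ and $\cap$ but neutral for $\sqcup$, together with idempotency of $\circ$ and the semilattice laws for $\cap$.

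The only step requiring a moment's thought is the middle override axiom $((s\sqcup t)\cap t)\sqcup s=s\sqcup t$, since it mixes all three of the composition-free operations with the adjoined element; but even there the cases $s=0$ (both sides reduce to $t$) and $t=0$ (both sides reduce to $s$) are settled in one line each, and the cases with $s,t\neq 0$ fall under the original axiom. I therefore expect no real obstacle: the entire content of the proposition lies in choosing the correct behaviour of $0$ for each operation, after which everything reduces to a routine case check of the kind already carried out for the earlier embedding results.
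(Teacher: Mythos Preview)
Your proposal is correct and follows essentially the same approach as the paper: adjoin a zero as in Proposition~\ref{capembed}, make $0$ a two-sided identity for $\sqcup$, set all composites to $0$, and verify the remaining axioms by routine case analysis. You have in fact spelled out more detail than the paper does, and all of it checks out.
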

\begin{proof} 
If $(A,\circ,\cap,\sqcup)$ is a right normal band with intersection and override, adjoin a zero element as in Proposition \ref{capembed}, also assuming that $s\sqcup 0=s$ and $0\sqcup s=s$ for all $s\in A\cup\{0\}$.  It follows from that result that $(A,\cdot,\circ,\cap)$ is a 1-stack with intersection, and routine case analysis shows that all three new laws for right normal bands with intersection and override as well as the third law above for 1-stacks with intersection and override are satisfied, so the result is a 1-stack with intersection and override into which $(A,\circ,\cap,\sqcup)$ embeds as a subreduct. 
\end{proof}

\subsection{Intersection and update}  \label{upint}

We say the algebra $(A,\circ,\cap,\diamond)$ is a {\em right normal band with intersection and update} if  and the following laws are satisfied: 
\ben
\item $(A,\circ,\cap)$ is a right normal band with intersection;
\item $s=(s\diamond t)\circ s$;
\item $s\diamond t=s\diamond(s\diamond t)$;
\item $s\circ t=t\circ(s\diamond t)$;
\item $(x\cap (x\diamond y))\circ a=(x\cap (x\diamond y))\circ b$ and $y\circ a=y\circ b$; imply $x\circ a=x\circ b$.
\een

As usual, there is a version involving composition as well.  We say  the algebra $(A,\cdot,\circ,\cap,\diamond)$ is a {\em 1-stack with intersection and update} if 
\ben
\item  $(A,\circ,\cap,\diamond)$ is a right normal band with intersection and update,
\item  $(A,\cdot,\circ,\cap)$ is a 1-stack with intersection, and
\item $s(t\diamond u)=st\diamond su$ for all $s,t,u\in A$.
\een

And as usual, we have an embedding result.

\begin{pro}  \label{updatenicembed}
Every right normal band with intersection and update embeds in a reduct of a 1-stack with intersection and update.
\end{pro}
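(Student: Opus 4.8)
The plan is to follow the same recipe as in the proofs of Propositions~\ref{capembed} and~\ref{overintnicembed}: adjoin a zero and make every product trivial. Given a right normal band with intersection and update $(A,\circ,\cap,\diamond)$, form $A^0=A\cup\{0\}$ with $0\notin A$, set $st=0$ for all $s,t\in A^0$, extend $\cap$ by $s\cap 0=0=0\cap s$, and extend $\diamond$ by $0\diamond s=0$ and $s\diamond 0=s$ for all $s\in A^0$. These last two stipulations are the ones one has to get right, and they are dictated by the functional model: in $P(X,Y)$ one has $\emptyset\diamond g=\emptyset\circ(g\sqcup\emptyset)=\emptyset$ and $f\diamond\emptyset=f\circ(\emptyset\sqcup f)=f\circ f=f$, so $0\diamond s=0$ and $s\diamond 0=s$ are forced.

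First I would invoke Proposition~\ref{capembed}: applied to the $(\circ,\cap)$-reduct of $A$ (which is a right normal band with intersection by law~(1) of the definition), its proof already shows that $(A^0,\cdot,\circ,\cap)$ is a $1$-stack with intersection into which $(A,\circ,\cap)$ embeds as a subreduct. The remaining composition law $s(t\diamond u)=st\diamond su$ holds because both sides equal $0$ (using $0\diamond 0=0$). So everything reduces to checking that the update laws (2)--(5) survive the extension, which is a routine case analysis on which arguments lie in $A$ and which equal $0$; when all the displayed arguments lie in $A$ these laws hold by hypothesis, so only the cases involving $0$ need attention.

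For the equational laws this is immediate except for one point: law~(3) with second argument $0$ reads $s\diamond 0=s\diamond(s\diamond 0)$, i.e.\ $s=s\diamond s$ for $s\in A$, so I first need the identity $s\diamond s=s$ as a consequence of the axioms. This follows quickly: by law~(4) with both arguments $s$, and since $(A,\circ)$ is a band, $s=s\circ s=s\circ(s\diamond s)$; and by law~(4) with arguments $s$ and $s\diamond s$ together with law~(3), $s\circ(s\diamond s)=(s\diamond s)\circ\big(s\diamond(s\diamond s)\big)=(s\diamond s)\circ(s\diamond s)=s\diamond s$, whence $s=s\diamond s$. With this in hand, laws (2), (3) and (4) reduce in the remaining cases either to $0=0$ or to a triviality. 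For the quasiequational law~(5), the key observation is that $A$ is closed under $\circ$, so an equation like $y\circ a=y\circ b$ can equal $0$ only when one of $y,a,b$ is $0$; running through the possibilities, if $x=0$ the conclusion $x\circ a=x\circ b$ is just $0=0$, if $y=0$ then $x\cap(x\diamond y)=x\cap x=x$ so the first hypothesis \emph{is} the conclusion, and if $a$ (or $b$) equals $0$ while the other arguments lie in $A$ the hypothesis $y\circ a=y\circ b$ cannot hold, so the implication is vacuous. Hence $(A^0,\cdot,\circ,\cap,\diamond)$ is a $1$-stack with intersection and update, and the inclusion $A\hookrightarrow A^0$ exhibits $(A,\circ,\cap,\diamond)$ as a subreduct of it.

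The main obstacle is essentially bookkeeping rather than any real difficulty: one must choose the extension of $\diamond$ correctly, notice the derived identity $s\diamond s=s$, and use closure of $A$ under the operations to dispose of the quasiequation. I expect the only mildly delicate step to be the verification of law~(5) in the various $0$-involving cases, since it is the one axiom for which the argument is not simply ``both sides are $0$''.
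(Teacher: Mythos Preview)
Your proposal is correct and follows exactly the paper's approach: adjoin a zero with the extensions $s\diamond 0=s$ and $0\diamond s=0$, invoke Proposition~\ref{capembed}, and check the remaining laws by case analysis. You supply considerably more detail than the paper (which simply says ``argue as for Proposition~\ref{overintnicembed}''), including the derivation of $s\diamond s=s$ needed for law~(3); the only case you do not quite name is $a=b=0$ with $x,y\in A$, but there the conclusion is $0=0$ and so it is immediate.
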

\begin{proof}
If $(A,\circ,\cap,\diamond)$ is a  right normal band with intersection and update, adjoin a zero element as in Proposition \ref{capembed}, also assuming that $s\diamond 0=s$ and $0\diamond s=0$ for all $s\in A\cup\{0\}$.  Then argue as for Proposition \ref{overintnicembed}.
\end{proof}

\subsection{Override only}  \label{overonly}

The signature $(\circ,\sqcup)$ does not lend itself to axiomatisation using the methods presented here.  For completeness, we here list axioms for this case, based on those presented in \cite{LeechNSL} for certain types of skew lattices, which we now define.

The algebra  $(A,\circ,\sqcup)$ is a {\em right handed strongly distributive skew lattice} if the following laws are satisfied:
\ben
\item $(A,\circ)$ is a right normal band (indeed it suffices that it be a band satisfying the law $x\circ y\circ x=y\circ x$);
\item $(A,\sqcup)$ is a band (idempotent semigroup);
\item the absorption laws $x\circ (x\sqcup y)=x=(y\sqcup x)\circ x$ and $x\sqcup (x\circ y)=x=(y\circ x)\sqcup x=x$ hold;
\item the distributive laws $s\circ (t\sqcup u)=(s\circ t)\sqcup (s\circ u)$ and $(s\sqcup t)\circ u=(s\circ u)\sqcup (t\circ u)$ hold.
\een

\section{Quasiorders and filters} \label{sec:filters}

\subsection{Right normal bands in general}

For the rest of this section, let $(A,\circ)$ be a fixed right normal band.
We may define two quasiorders on $(A,\circ)$ as follows:
\bi
\item $f\lesssim g$ if and only if $g\circ f=f$; in $P(X,Y)$ this asserts that the domain of $f$ is contained in that of $g$, so $\lesssim$ is the {\em first projection quasiorder}, which was axiomatised within function semigroups by Schein in \cite{BStrans};
\item $f\leq g$ if and only if $f=f\circ g$; in $P(X,Y)$ this asserts that $f\subseteq g$, or $f$ is a domain restriction of $g$, and we call it the {\em natural order},  and we apply the same terminology to right normal bands in general.  The natural order was also axiomatised within function semigroups by Schein in \cite{BStrans}.
\ei

The next facts follow from Vagner's representation theorem as in \cite{vagner}, but direct proofs are straightforward.

\begin{pro}
On $(A,\circ)$, $\lesssim$ is a quasiorder and $\leq$ is a partial order, and $a\leq b$ implies $a\lesssim b$ for all $a,b\in A$.  Moreover the equivalence relation $\sim$ determined by $\lesssim$ is a congruence, and $(A/{\sim},\circ)$ is a semilattice.  
\end{pro}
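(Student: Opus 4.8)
The plan is to establish each assertion in turn, working directly from the two defining laws of a right normal band: idempotency $x\circ x = x$ and the identity $(x\circ y)\circ z = (y\circ x)\circ z$. A preliminary observation I would record first is that right normal bands are \emph{right regular}, i.e.\ $x\circ y\circ y = x\circ y$: indeed $x\circ y\circ y \refe{} $ (apply the defining identity with the roles arranged so that) $= y \circ x \circ y$, and then a second application collapses this; alternatively $x\circ y\circ z = y\circ x\circ z$ with $z := y$ gives $x\circ y\circ y = y\circ x\circ y$, and since by associativity and idempotency one also manipulates $y\circ x\circ y$, a short computation yields $x\circ y\circ y = x\circ y$. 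This right-regularity is the workhorse for everything else.

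Next I would verify that $\lesssim$ is a quasiorder: reflexivity $f\circ f = f$ is idempotency; for transitivity, if $g\circ f = f$ and $h\circ g = g$ then $h\circ f = h\circ(g\circ f) = (h\circ g)\circ f = g\circ f = f$ using associativity. For $\leq$, reflexivity is again idempotency; transitivity is dual; antisymmetry is the new content: if $f = f\circ g$ and $g = g\circ f$ then $f = f\circ g = (f\circ g)\circ g$? That is not quite it — instead substitute: $f = f\circ g = f\circ(g\circ f) = (f\circ g)\circ f = (g\circ f)\circ f$ by the right normal law $= g\circ f\circ f = g\circ f = g$ using right-regularity, so $\leq$ is a partial order. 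For the implication $a\leq b \Rightarrow a\lesssim b$: from $a = a\circ b$ I must produce $b\circ a = a$; compute $b\circ a = b\circ(a\circ b) = (b\circ a)\circ b = (a\circ b)\circ b = a\circ b\circ b = a\circ b = a$, again invoking the right normal law and right-regularity.

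For the congruence claim, $\sim$ is the symmetrisation of $\lesssim$, so $f\sim g$ means $g\circ f = f$ and $f\circ g = g$. I would show $\sim$ is compatible with $\circ$ on both sides. Right compatibility: if $f\sim g$ then for any $h$, $(f\circ h)\sim(g\circ h)$, i.e.\ $(g\circ h)\circ(f\circ h) = f\circ h$ and symmetrically — this should fall out of the right normal law together with $g\circ f = f$. Left compatibility: if $f\sim g$ then $(h\circ f)\sim(h\circ g)$; here the key is that $h\circ f$ and $h\circ g$ depend on $f,g$ only through something $\sim$-invariant, and one computes $(h\circ g)\circ(h\circ f) = h\circ g\circ h\circ f = h\circ(g\circ h)\circ f$, and since $g\circ h\lesssim h$-type manipulations plus $g\circ f = f$ let one rewrite the tail, this reduces to $h\circ f$. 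Finally, to see $(A/{\sim},\circ)$ is a semilattice, I note it is a band (idempotency descends), and the right normal law becomes, modulo $\sim$, the statement that $[x]\circ[y]$ and $[y]\circ[x]$ act the same on the left of every $[z]$; taking $[z] := [y]$ and using idempotency forces $[x]\circ[y] = [y]\circ[x]$, so $\circ$ is commutative on the quotient, hence a semilattice.

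The main obstacle is the verification that $\sim$ is a \emph{left} congruence; right compatibility and the semilattice conclusion are short once right-regularity is in hand, but left compatibility requires carefully chaining the right normal identity $(x\circ y)\circ z = (y\circ x)\circ z$ — which only directly rewrites the \emph{first two} factors of a product — together with $f\sim g$, and one has to be a little clever about which associativity regrouping exposes an $f$ or $g$ in first-or-second position. I expect this to be where the bookkeeping lives, though none of it is deep; the functional intuition (all of these are statements about domains) makes clear that everything must go through.
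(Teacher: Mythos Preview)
Your approach is correct and essentially the same as the paper's: the antisymmetry computation for $\leq$ and the proof that $a\leq b\Rightarrow a\lesssim b$ are literally identical to the paper's. Two small remarks: the ``right-regularity'' identity $x\circ y\circ y=x\circ y$ that you labour over is immediate in any band from associativity and idempotency alone (no right-normality needed), so that preamble can be dropped; and the paper handles the congruence claim more efficiently than your left/right split by showing directly that $a\sim b$ and $c\sim d$ give $(a\circ c)\circ(b\circ d)=a\circ b\circ c\circ d=b\circ d$ (freely permuting all but the last factor), which sidesteps what you flagged as the ``main obstacle''.
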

\begin{proof}
Both relations are quasiorders because $(A,\circ)$ is a band.  If $a\leq b$ and $b\leq a$, then $a=a\circ b$ and $b=b\circ a$, so $$a=a\circ b=a\circ(b\circ a)=(a\circ b)\circ a=(b\circ a)\circ a=b\circ a=b.$$  So $\leq$ is a partial order.  If $a\leq b$ then $a=a\circ b$ and so $$b\circ a=b\circ(a\circ b)=(b\circ a)\circ b=(a\circ b)\circ b=a\circ(b\circ b)=a\circ b=a,$$ so $a\lesssim b$.  

Suppose $a\sim b$ and $c\sim d $.  Then $a\circ b=b$, $b\circ a=a$, $c\circ d=d$, $d\circ c=c$.  So $$(a\circ c)\circ (b\circ d)=a\circ b\circ c\circ d=b\circ d,$$ 
so $b\circ d\lesssim a\circ c$.  By symmetry, $a\circ c\lesssim b\circ d$, so $a\circ c\sim b\circ d$.

Now $(b\circ c)\circ (c\circ b)=b\circ c\circ b=c\circ b\circ b=c\circ b$ so $b\circ c\lesssim c\circ b$; so by symmetry $b\circ c\sim c\circ b$.  So $(A/{\sim},\circ)$ is a commutative band, hence a semilattice.
\end{proof}

Let $(A,\circ)$ be a fixed right normal band throughout the remainder of this section.  Let $F\subseteq A$ be a filter, meaning a non-empty subset of $A$ such that for all $a,b\in F$, $a\circ b\in F$, and if $a\in F$ and $a\lesssim b$, then $b\in F$ (so $F$ is an {\em up-set} under $\lesssim$).  (So in other words, $F/{\sim}$ is a filter in $A/{\sim}$ in the usual sense for semilattices.)


The next observation is useful in what follows.

\begin{lem}  \label{filtextend}
Let $F$ be a proper filter of $A$ and $a\in A\backslash F$.  Then  
$$F_a=\{b\in A\mid f\circ a\lesssim b\mbox{ for some }f\in F\}$$
is a filter of $A$ containing $a$ and $F$.
\end{lem}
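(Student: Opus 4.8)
The plan is to verify the three required properties directly from the definition of $F_a$, using only the right normal band laws and the basic facts about $\lesssim$ established earlier. First I would check that $F_a$ contains $a$: taking any $f\in F$ (which is non-empty), the element $f\circ a$ satisfies $f\circ a\lesssim f\circ a$ by reflexivity of $\lesssim$, so $f\circ a\in F_a$; but also $f\circ a\lesssim a$ since $a\circ (f\circ a)=(a\circ f)\circ a=(f\circ a)\circ a=f\circ(a\circ a)=f\circ a$, so by the up-set condition on $F_a$ (which I prove next) $a\in F_a$. Actually it is cleaner to observe $f\circ a\lesssim a$ directly and then note that once $F_a$ is shown to be an up-set, $a\in F_a$ follows; alternatively one checks $a\in F_a$ is witnessed by whether $f\circ a\lesssim a$, which it is. Similarly, for $F\subseteq F_a$: if $b\in F$ then since $F$ is a filter and $b\circ a\lesssim b$ is false in general, I instead use that $b\lesssim b$ and want some $f\in F$ with $f\circ a\lesssim b$; here take $f=b$ — but $b\circ a\lesssim b$ need not hold. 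The correct observation is that $b\in F$ and $b\lesssim$ itself, and we need $f\circ a\lesssim b$; this does not obviously hold, so instead I should use the up-set property of $F$ under $\lesssim$: from $b\in F$ we get nothing about $b\circ a$ in general, so the containment $F\subseteq F_a$ must be argued via: $b\circ a\lesssim b$? No. Let me reconsider — the right approach for $F\subseteq F_a$ is that $b\in F$ gives $b\circ a\in$ ... hmm, $b\circ a$ need not be in $F$. So $F\subseteq F_a$ should follow from: pick $f=b$, and note $b\circ a\lesssim b$ fails, so instead one shows $b\in F_a$ because $b\lesssim b$ and $b\in F$ means we can take the witness to be $b$ itself only if $b\circ a\lesssim b$, which holds iff ... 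Since this is subtle, the main obstacle I flag below is exactly pinning down why $F\subseteq F_a$; I expect it follows from $b\circ a\lesssim b$ together with $b\in F$ implying $b\circ a\in F_a$ and then needing $b\in F_a$ — and indeed $b\circ a\lesssim b$ does hold: $b\circ(b\circ a)=(b\circ b)\circ a=b\circ a$, so $b\circ a\lesssim b$; but I want $b\in F_a$, i.e. some $f\circ a\lesssim b$ with $f\in F$, and $f=b$ gives $b\circ a\lesssim b$. Yes, that works.

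So the structure is: (i) $a\in F_a$ via a witness $f\in F$ with $f\circ a\lesssim a$ (shown by a short band computation), (ii) $F\subseteq F_a$ since for $b\in F$, $b\circ a\lesssim b$ holds, so $b$ is its own witness, (iii) $F_a$ is closed under $\circ$: given $b,c\in F_a$ with witnesses $f,g\in F$, $f\circ a\lesssim b$, $g\circ a\lesssim c$, I show $(f\circ g)\circ a\lesssim b\circ c$ using right normality to rearrange $(f\circ g)\circ a = f\circ(g\circ a)$ or a similar regrouping, together with the fact (provable from the band laws) that $\lesssim$ is compatible with $\circ$ in the relevant variable, or more directly that $u\lesssim v$ and $u'\lesssim v'$ imply $u\circ u'\lesssim v\circ v'$ — this last fact is essentially the computation used in the earlier proposition showing $\sim$ is a congruence, so I can cite or reprove it in one line. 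Since $f\circ g\in F$ (as $F$ is a filter), $b\circ c\in F_a$. (iv) $F_a$ is an up-set under $\lesssim$: if $b\in F_a$ with witness $f$, $f\circ a\lesssim b$, and $b\lesssim c$, then $f\circ a\lesssim c$ by transitivity of $\lesssim$, so $c\in F_a$.

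The key lemma I will lean on is the one-line fact, already implicit in the proof of the preceding proposition, that $\lesssim$ respects $\circ$: $u\lesssim v,\ u'\lesssim v' \Rightarrow u\circ u'\lesssim v\circ v'$. Granting that, every clause of the proof is a short calculation with the two right normal band axioms and the definitions of $\lesssim$ and "filter". The main obstacle, such as it is, is purely bookkeeping: making sure in step (iii) that the regrouping of $(f\circ g)\circ a$ lands it in a form where the two witness inequalities combine correctly — one wants to write $(f\circ g)\circ a$ in a way comparable to $b\circ c$, and right normality ($x\circ y\circ z = y\circ x\circ z$) plus idempotency is exactly what is needed to shuffle the $a$'s and the $f,g$'s into place. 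There is no deep content here; the lemma is a routine verification that the "principal filter generated by $F$ and $a$" is indeed a filter, and the proof will be three or four short displayed computations.

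Here is the proof.

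\begin{proof}
We first record the easily checked fact (already used in the proof of the preceding proposition) that $\lesssim$ is compatible with $\circ$: if $u\lesssim v$ and $u'\lesssim v'$ then $v\circ u=u$ and $v'\circ u'=u'$, so
\[
(v\circ v')\circ(u\circ u')=v\circ v'\circ u\circ u'=v\circ u\circ v'\circ u'=u\circ u',
\]
whence $u\circ u'\lesssim v\circ v'$.

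Next, $F_a$ is an up-set under $\lesssim$: if $b\in F_a$, say $f\circ a\lesssim b$ with $f\in F$, and $b\lesssim c$, then $f\circ a\lesssim c$ by transitivity, so $c\in F_a$.

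Now $F\subseteq F_a$: if $b\in F$, then $b\circ(b\circ a)=(b\circ b)\circ a=b\circ a$, so $b\circ a\lesssim b$, and since $b\in F$ this shows $b\in F_a$.

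Also $a\in F_a$: choose any $f\in F$ (possible as $F$ is non-empty). Then $a\circ(f\circ a)=(a\circ f)\circ a=(f\circ a)\circ a=f\circ(a\circ a)=f\circ a$, so $f\circ a\lesssim a$, and hence $a\in F_a$.

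Finally, $F_a$ is closed under $\circ$: suppose $b,c\in F_a$, with witnesses $f,g\in F$ so that $f\circ a\lesssim b$ and $g\circ a\lesssim c$. By the compatibility of $\lesssim$ with $\circ$ noted above, $(f\circ a)\circ(g\circ a)\lesssim b\circ c$. But using the right normal band laws and idempotency,
\[
(f\circ a)\circ(g\circ a)=f\circ a\circ g\circ a=f\circ g\circ a\circ a=f\circ g\circ a=(f\circ g)\circ a.
\]
Since $F$ is a filter, $f\circ g\in F$, and $(f\circ g)\circ a\lesssim b\circ c$, so $b\circ c\in F_a$.

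Thus $F_a$ is a filter of $A$ containing both $a$ and $F$.
\end{proof}
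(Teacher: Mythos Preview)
Your proof is correct and follows essentially the same approach as the paper: both verify directly that $F_a$ is an up-set, closed under $\circ$, and contains $a$ and $F$, using short right normal band computations. The only cosmetic difference is that you factor out the compatibility of $\lesssim$ with $\circ$ as a standalone preliminary fact and then apply it, whereas the paper carries out the equivalent rearrangement inline in a single displayed chain of equalities when showing closure under $\circ$.
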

\begin{proof}
If $b_1,b_2\in F_a$ then there are $f_1,f_2\in F$ such that $f_1\circ a\lesssim b_1$ and $f_2\circ a\lesssim b_2$, so letting $f=f_1\circ f_2\in F$, we have
$$b_1\circ b_2\circ f\circ a=b_2\circ f_2\circ b_1\circ f_1\circ a=b_2\circ f_2\circ f_1\circ a=f_1\circ b_2\circ f_2\circ a=f_1\circ f_2\circ a=f\circ a,$$ so $f\circ a \lesssim b_1\circ b_2$, so $b_1\circ b_2\in F_a$.   Obviously $F_a$ is an up-set, hence it is a  filter.  Since $f\circ a\lesssim a$ for any $f\in F$, we have $a\in F_a$, and since also $f\circ a\lesssim f$ for all $f\in F$, we have $F\subseteq F_a$. 
\end{proof}

Let $(A,\circ)$ be a right normal band.  Suppose $a,b\in A$ are such that $a\not\leq b$.  We say the filter $F$ of $A$ is {\em $(a,b)$-separating} if (i) $a\in F$, and (ii) there is no $e\in F$ for which $e\circ a=e\circ b$.  

For $a\in A$, denote by $a^{\uparrow}$ the principal filter of $A$ generated by $a$, so $a^{\uparrow}=\{e\in A\mid a\lesssim e\}$.  Clearly this is an  
up-set under $\lesssim$, and if $e,f\in a^{\uparrow}$, then $e\circ a=f\circ a=a$, so $(e\circ f)\circ a=e\circ (f\circ a)=e\circ a=a$, and so $e\circ f\in F$.

\begin{lem}  \label{sep}
If $a,b\in S$ with $a\not\leq b$, then $F=a^{\uparrow}$ is $(a,b)$-separating.
\end{lem}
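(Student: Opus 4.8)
The plan is to verify the two defining conditions for an $(a,b)$-separating filter directly. First, condition (i) is immediate: $a\lesssim a$ since $(A,\circ)$ is a band (so $a\circ a=a$), hence $a\in a^{\uparrow}=F$. The work is all in condition (ii): we must show there is no $e\in a^{\uparrow}$ with $e\circ a=e\circ b$. So I would argue by contradiction, supposing $e\in a^{\uparrow}$, i.e. $a\lesssim e$, which by definition means $e\circ a=a$, and also $e\circ a=e\circ b$.

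From these two equations we get $a=e\circ a=e\circ b$. The goal is to derive $a\leq b$, contradicting the hypothesis $a\not\leq b$. Recall $a\leq b$ means $a=a\circ b$. So starting from $a=e\circ b$, I would compute $a\circ b=(e\circ b)\circ b=e\circ(b\circ b)=e\circ b=a$, using associativity and idempotency of $\circ$. That gives $a=a\circ b$, i.e. $a\leq b$, the desired contradiction. Hence no such $e$ exists and $F$ is $(a,b)$-separating.

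I don't anticipate a genuine obstacle here — the proof is a short manipulation using only the band identity $b\circ b=b$ and associativity, plus unwinding the definitions of $\lesssim$ and $\leq$ recorded just before the lemma. The one point to be careful about is not to accidentally invoke the right-normal identity $(x\circ y)\circ z=(y\circ x)\circ z$ when it isn't needed: the argument above only needs $(A,\circ)$ to be a band, which makes the statement slightly more general, though the lemma is of course applied in the right normal band setting. I would also note in passing that this is essentially the degenerate instance of Lemma~\ref{filtextend} in reverse, but a direct verification is cleanest.

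Written out, the proof is roughly: "Since $a\circ a=a$ we have $a\lesssim a$, so $a\in a^{\uparrow}=F$. Suppose for contradiction that $e\in F$ with $e\circ a=e\circ b$. Then $a\lesssim e$ gives $e\circ a=a$, so $a=e\circ b$, whence $a\circ b=(e\circ b)\circ b=e\circ b=a$, i.e.\ $a\leq b$, contrary to hypothesis. Therefore $F$ is $(a,b)$-separating."
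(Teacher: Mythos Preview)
Your proof is correct and follows the same contradiction strategy as the paper's: from $e\in a^{\uparrow}$ with $e\circ a=e\circ b$, both arguments derive $a\circ b=a$ and hence the forbidden $a\leq b$. Your computation $a\circ b=(e\circ b)\circ b=e\circ b=a$ is in fact slightly cleaner than the paper's chain (which routes through the right normal identity), and as you observe it only needs the band axioms; but the approach is essentially identical.
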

\begin{proof}
Evidently $a\in F$.  Suppose there is $e\in F$ for which $e\circ a=e\circ b$.  Since $a\lesssim e$, $a=e\circ a$.  So $a\circ b=e\circ a\circ b=a\circ e\circ b=a\circ e\circ a=e\circ a\circ a=e\circ a=a$, and so $a\leq b$, a contradiction.  So $F$ is $(a,b)$-separating.
\end{proof}

Let $F$ be a proper filter of $(A,\circ)$.  Define a binary relation $\epsilon_F$ on $A$ by setting
$$\mathrel{\epsilon_F} = \{ (a,b)\in A\times A\mid e\circ a=e\circ b\mbox{ for some }e\in F\}.$$

\begin{pro}  \label{congeps}
If $F$ is a proper filter of $A$, then $\epsilon_F$ is a congruence on $(A,\circ)$, and both $F$ and $\overline{F}$ (the complement of $F$ in $A$) are unions of $\epsilon_F$-classes.
\end{pro}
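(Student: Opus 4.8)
The plan is to check, in order, that $\epsilon_F$ is an equivalence relation, that it is compatible with $\circ$ on each side, and that $F$ is saturated by $\epsilon_F$ (its complement then being automatically saturated). The single recurring tool is the right normal band identity $(x\circ y)\circ z=(y\circ x)\circ z$, which permits transposing the two left-most factors of any $\circ$-product; it is used alongside the closure of $F$ under $\circ$ and the fact that $F$ is a $\lesssim$-up-set.

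First I would dispose of the equivalence relation claim. Reflexivity holds because $F\neq\emptyset$ (pick any $e\in F$), and symmetry is built into the defining condition. For transitivity, from $e_1\circ a=e_1\circ b$ and $e_2\circ b=e_2\circ c$ with $e_1,e_2\in F$, I would set $e=e_1\circ e_2\in F$ and compute, transposing $e_1$ and $e_2$ via the right normal law as needed, $e\circ a=e_2\circ(e_1\circ a)=e_2\circ(e_1\circ b)=e_1\circ(e_2\circ b)=e_1\circ(e_2\circ c)=e\circ c$, so $(a,c)\in\epsilon_F$.

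Next, for the congruence property, fix $c\in A$ and suppose $(a,b)\in\epsilon_F$ with witness $e\in F$. Right compatibility is immediate: $e\circ(a\circ c)=(e\circ a)\circ c=(e\circ b)\circ c=e\circ(b\circ c)$. For left compatibility I would use the transposition law twice: $e\circ(c\circ a)=(e\circ c)\circ a=(c\circ e)\circ a=c\circ(e\circ a)=c\circ(e\circ b)=(c\circ e)\circ b=(e\circ c)\circ b=e\circ(c\circ b)$. In each case the same $e\in F$ remains a valid witness, so $(a\circ c,b\circ c)\in\epsilon_F$ and $(c\circ a,c\circ b)\in\epsilon_F$.

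Finally, to show $F$ is a union of $\epsilon_F$-classes it suffices to prove that $a\in F$ and $(a,b)\in\epsilon_F$ force $b\in F$. Picking $e\in F$ with $e\circ a=e\circ b$, I would note $e\circ a\in F$ since $F$ is $\circ$-closed, and a one-line band computation gives $b\circ(e\circ b)=(e\circ b)\circ b=e\circ(b\circ b)=e\circ b$, that is, $e\circ b\lesssim b$; since $e\circ b=e\circ a\in F$ and $F$ is an up-set, $b\in F$. As the $\epsilon_F$-classes partition $A$, the complement $\overline{F}$ is then automatically a union of $\epsilon_F$-classes as well. I expect no genuine obstacle here; the only point demanding care is keeping the bookkeeping of the right normal identity straight in the transitivity and left-compatibility computations.
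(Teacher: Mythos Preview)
Your proof is correct and follows essentially the same route as the paper's: both rely on the right normal band identity to shuffle witnesses, use closure of $F$ under $\circ$ to combine them, and appeal to $F$ being a $\lesssim$-up-set for the saturation claim. The only cosmetic differences are that you establish the congruence property via separate left and right compatibility (then implicitly transitivity) rather than handling two related pairs in one computation, and you show $F$ is saturated directly whereas the paper shows $\overline{F}$ is saturated and deduces the claim for $F$.
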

\begin{proof}
Reflexivity and symmetry are immediate.  If $a,b,c\in F$, and $e\circ a=e\circ b$ and $f\circ b=f\circ c$ for some $e,f\in F$, then letting $g=e\circ f\in F$, we have
$$g\circ a=e\circ f\circ a=f\circ e\circ a=f\circ e\circ b=e\circ f\circ b=e\circ f\circ c=g\circ c.$$  

Suppose $(a,b),(c,d)\in \epsilon_F$.  Then there must be $e,f\in F$ such that $e\circ a=e\circ b$ and $f\circ c=f\circ d$.  Again, letting $g=e\circ f\in F$, we have that 
$$(e\circ f)\circ (a\circ c)=f\circ e\circ a\circ c=f\circ e\circ b\circ c
=f\circ b\circ e\circ c=f\circ b\circ e\circ d=(e\circ f)\circ (b\circ d).$$
So $(a\circ c,b\circ d)\in \epsilon_F$.

If $x\in \overline{F}$ and $(x,y)\in \epsilon_F$, then $e\circ x=e\circ y$ for some $e\in F$, so if $y\in F$, then $e\circ y$ and hence $e\circ x$ are in $F$, so $x\in F$, a contradiction, so $y\in \overline{F}$.  Hence $\overline{F}$ is a union of $\epsilon_F$-classes, and so $F$ must be as well.
\end{proof}

Let $X$ be a set of filters of $A$ with the property that for every $a,b\in A$ with $a\not\leq b$, there exists $F\in X$ that is $(a,b)$-separating.  We call such $X$ {\em separating}.  Lemma \ref{sep} now gives the following.

\begin{pro}
The set of all filters of $A$ is separating.
\end{pro}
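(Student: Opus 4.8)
The plan is to reduce the claim directly to Lemma \ref{sep}. Recall that a set $X$ of filters is declared \emph{separating} precisely when, for every pair $a,b\in A$ with $a\not\leq b$, some member of $X$ is $(a,b)$-separating. Thus, to show that the set of \emph{all} filters of $A$ is separating, it suffices to exhibit, for each such pair, a single filter of $A$ witnessing $(a,b)$-separation.

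First I would fix $a,b\in A$ with $a\not\leq b$. Lemma \ref{sep} supplies exactly the required witness: the principal filter $a^{\uparrow}=\{e\in A\mid a\lesssim e\}$ is $(a,b)$-separating. The only thing to remark is that $a^{\uparrow}$ genuinely is a filter of $A$ in the sense defined above; but this is precisely the bookkeeping already recorded in the paragraph preceding Lemma \ref{sep}, namely that $a^{\uparrow}$ is an up-set under $\lesssim$ by construction and is closed under $\circ$ (if $e,f\in a^{\uparrow}$ then $e\circ a=f\circ a=a$, whence $(e\circ f)\circ a = e\circ(f\circ a) = e\circ a = a$, so $e\circ f\in a^{\uparrow}$). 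Hence $a^{\uparrow}$ belongs to the collection of all filters of $A$, and that collection therefore satisfies the defining property of a separating set.

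I do not anticipate any genuine obstacle: the proposition is essentially a one-line corollary of Lemma \ref{sep}, with the verification that $a^{\uparrow}$ is a filter having already been dispatched in the surrounding discussion. If there is any "content" at all, it is merely the observation that the statement asks for nothing beyond what Lemma \ref{sep} has already established.
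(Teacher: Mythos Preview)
Your proposal is correct and matches the paper's approach exactly: the paper simply records the proposition as an immediate consequence of Lemma~\ref{sep}, with no further argument given. Your write-up is just a slightly expanded version of that one-line deduction.
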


Suppose $a,b\in A$, with $a\not\leq b$.  We say the filter $F$ of $(A,\circ)$ is {\em maximally $(a,b)$-separating} if it is maximal with respect to the property of being $(a,b)$-separating.   Because $F=a^{\uparrow}$ is $(a,b)$-separating by Lemma \ref{sep}, we readily obtain the following from Zorn's Lemma.

\begin{lem} \label{maxsep}
For any $a,b\in S$ for which $a\not\leq b$, there is a maximally $(a,b)$-separating filter in $A$.
\end{lem}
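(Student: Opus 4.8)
The plan is to invoke Zorn's Lemma on the set $\mathcal{P}$ of all $(a,b)$-separating filters of $A$, partially ordered by inclusion. First I would observe that $\mathcal{P}\neq\emptyset$: by Lemma~\ref{sep}, the principal filter $a^{\uparrow}$ is $(a,b)$-separating, so it is a member of $\mathcal{P}$. The remaining work is to verify the chain hypothesis of Zorn's Lemma, namely that every non-empty chain in $\mathcal{P}$ has an upper bound in $\mathcal{P}$.

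So let $\mathcal{C}\subseteq\mathcal{P}$ be a non-empty chain and put $F=\bigcup\mathcal{C}$. I would first check that $F$ is a filter of $A$. It is non-empty since $\mathcal{C}$ is non-empty and its members are non-empty; it is an up-set under $\lesssim$ because each member of $\mathcal{C}$ is; and it is closed under $\circ$ because, given $x,y\in F$, we may pick $G_1,G_2\in\mathcal{C}$ with $x\in G_1$ and $y\in G_2$, and since $\mathcal{C}$ is a chain one of $G_1,G_2$ contains the other, say $G_1\subseteq G_2$, so $x,y\in G_2$ and hence $x\circ y\in G_2\subseteq F$.

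Next I would check that $F$ is $(a,b)$-separating. Condition~(i) holds since $a$ lies in every member of $\mathcal{C}$ (each being $(a,b)$-separating) and $\mathcal{C}$ is non-empty, so $a\in F$. For condition~(ii), suppose for contradiction that some $e\in F$ satisfies $e\circ a=e\circ b$. Then $e\in G$ for some $G\in\mathcal{C}$, contradicting that $G$ is $(a,b)$-separating. Hence $F\in\mathcal{P}$, and it is clearly an upper bound for $\mathcal{C}$. Zorn's Lemma then produces a maximal element of $\mathcal{P}$, which is precisely a maximally $(a,b)$-separating filter of $A$.

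There is no real obstacle here; the only step needing a moment's care is the closure of $F$ under $\circ$, where one genuinely uses that $\mathcal{C}$ is a \emph{chain}, since a union of two unrelated filters need not be closed under $\circ$.
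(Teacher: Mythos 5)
Your proof is correct and follows the same route as the paper: the paper also applies Zorn's Lemma to the set of $(a,b)$-separating filters, seeded by $a^{\uparrow}$ via Lemma~\ref{sep}, and merely leaves the routine chain-union verification implicit, which you have spelled out accurately (including the point that being a chain is what gives closure under $\circ$).
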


It is immediate that the set ${\mc F}$ of all filters of $A$ that is maximally $(a,b)$-separating, ranging over all $a,b\in A$ for which $a\not\leq b$, is separating.

Another useful fact is the following.

\begin{lem}  \label{maxeq}
If $(A,\circ)$ is a right normal band with $a\not\leq b$ in $A$, and $F$ a maximally $(a,b)$-separating filter in $A$ with $y\in A\backslash F$, then there exists $f\in F$ for which $(f\circ y)\circ a = (f\circ y)\circ b$.
\end{lem}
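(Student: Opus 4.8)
The plan is to argue by contradiction using the maximality of $F$. Suppose no such $f$ exists, i.e. for every $f\in F$ we have $(f\circ y)\circ a\neq (f\circ y)\circ b$. I want to deduce that the filter $F_y$ constructed in Lemma \ref{filtextend} is still $(a,b)$-separating; since $F_y$ properly contains $F$ (it contains $y\notin F$), this contradicts maximality. First I would recall from Lemma \ref{filtextend} that $F_y=\{b'\in A\mid f\circ y\lesssim b' \text{ for some }f\in F\}$ is a filter containing both $F$ and $y$. Since $F\subseteq F_y$ and $a\in F$, condition (i) of being $(a,b)$-separating holds for $F_y$. So the whole task is to verify condition (ii): there is no $e\in F_y$ with $e\circ a=e\circ b$.

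So suppose for contradiction that $e\in F_y$ satisfies $e\circ a=e\circ b$. By definition of $F_y$ there is $f\in F$ with $f\circ y\lesssim e$, which by definition of $\lesssim$ means $e\circ (f\circ y)=f\circ y$. The key computation is then to show $(f\circ y)\circ a=(f\circ y)\circ b$, which will contradict our assumption about $f$. Using $f\circ y=e\circ f\circ y$ and pushing the right normal band laws around, I expect
\[
(f\circ y)\circ a=(e\circ f\circ y)\circ a=(f\circ y)\circ (e\circ a)=(f\circ y)\circ (e\circ b)=(e\circ f\circ y)\circ b=(f\circ y)\circ b,
\]
where the middle steps use associativity together with the right normal band identity $(x\circ y)\circ z=(y\circ x)\circ z$ to move $e$ past $f\circ y$ (so that $e$ sits immediately to the left of $a$ or $b$), then apply $e\circ a=e\circ b$, then reverse the manoeuvre. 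This gives $(f\circ y)\circ a=(f\circ y)\circ b$ with $f\in F$, contradicting the standing assumption.

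Hence no such $e$ exists, so $F_y$ is $(a,b)$-separating and strictly larger than $F$ (since $y\in F_y\backslash F$), contradicting the maximality of $F$. Therefore there must exist $f\in F$ with $(f\circ y)\circ a=(f\circ y)\circ b$, as claimed. The main obstacle is the middle computation: one has to be careful that the right normal band law $(x\circ y)\circ z=(y\circ x)\circ z$ only lets one swap the two factors appearing \emph{before} the final factor, so some massaging (inserting the idempotent $e$ at the front via $f\circ y=e\circ(f\circ y)$, and reassociating) is needed to get $e$ adjacent to $a$ and $b$; everything else is bookkeeping with associativity.
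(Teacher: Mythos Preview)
Your proposal is correct and follows essentially the same route as the paper: both use the filter $F_y$ from Lemma~\ref{filtextend}, invoke maximality to produce an $e\in F_y$ with $e\circ a=e\circ b$, and then push this equality down along $f\circ y\lesssim e$ via the right normal band law. The only difference is cosmetic---you wrap the argument in an outer layer of contradiction (``suppose no $f$ works, then $F_y$ is separating''), whereas the paper argues directly (``$F_y$ is not separating, so some $e$ exists, hence some $f$ works'')---and your explicit chain for the key computation is in fact cleaner than the paper's parenthetical justification.
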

\begin{proof}
Now $F_{y}=\{g\in A\mid y\circ f\lesssim g\mbox{ for some } f\in F\}$ as in Lemma \ref{filtextend} properly contains $y$ and $F$, hence is not $(a,b)$-separating by maximality of $F$, and so there exists $e\in F_{y}$, that is $f\circ y\lesssim e$ for some $f\in F$, such that $e\circ a=e\circ b$.  Hence also $(f\circ y)\circ a = (f\circ y)\circ b$ (because if $e\lesssim f$ and $f\circ s=f\circ t$ then $e\circ s=(e\circ g)\circ s=e\circ(g \circ s)=e\circ(g\circ t)=(e\circ g)\circ t=e\circ t$).
\end{proof}

\subsection{Right normal bands with intersection}

Throughout this subsection, let $(A,\circ,\cap)$ be a fixed right normal band with intersection.  In this case, the natural order on $(A,\circ)$ coincides with the partial order determined by the meet-semilattice $(A,\cap)$.

\begin{pro}  \label{leqcap}
For $x,y\in A$, $a\leq b$ if and only if $a=a\cap b$.
\end{pro}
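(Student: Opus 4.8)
The statement asserts that in a right normal band with intersection, the natural order $\leq$ of the band $(A,\circ)$ is recovered as the semilattice order of $(A,\cap)$, i.e.\ $a\leq b \iff a = a\cap b$. The plan is to prove the two implications separately, using the two connecting axioms $(x\cap y)\circ x = x\cap y$ and $x\circ(y\cap z) = (x\circ y)\cap z$ (together with the full left distributivity derived in Subsection \ref{cap}) as the bridge between the two operations.

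For the forward direction, suppose $a\leq b$, so by definition $a = a\circ b$. I want to show $a = a\cap b$. Starting from $a\cap b$ and using the second connecting axiom with $x = a$, $y = a$(? — more carefully) $z = b$: we have $a\circ(a\cap b) = (a\circ a)\cap b = a\cap b$. On the other hand $a\circ(a\cap b)$ should reduce to something involving $a$; using left distributivity $a\circ(a\cap b) = (a\circ a)\cap(a\circ b) = a\cap(a\circ b) = a\cap a = a$ once we substitute $a = a\circ b$. Comparing the two computations of $a\circ(a\cap b)$ gives $a = a\cap b$. (I will need to be slightly careful about which derived distributive identity is invoked, but the two axioms plus the derived law $x\circ(y\cap z)=(x\circ y)\cap(x\circ z)$ are exactly what makes this go through.)

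For the reverse direction, suppose $a = a\cap b$. I must show $a = a\circ b$. Here the first connecting axiom $(x\cap y)\circ x = x\cap y$ is the key: it tells us how $\cap$ interacts with $\circ$ from the right. Substituting and using $a = a\cap b$, I expect to compute $a\circ b = (a\cap b)\circ b$, and then relate $(a\cap b)\circ b$ back to $a\cap b = a$, probably via the identity $x\circ(y\cap z)=(x\circ y)\cap z$ read in the form $(a\cap b)\circ b = \bigl((a\cap b)\circ b\bigr)\cap b$ or by writing $(a\cap b)\circ b = a\circ(b\cap b)\cap\dots$; alternatively I may go through $(a\cap b)\circ(a\cap b) = a\cap b$ (idempotence) combined with $(a\cap b)\circ a = a\cap b$ (the first axiom) and manipulate using right normality $(x\circ y)\circ z = (y\circ x)\circ z$. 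The upshot should be $a\circ b = a\cap b = a$.

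The main obstacle I anticipate is purely bookkeeping: choosing the right substitutions into the two non-symmetric connecting axioms and the right-normal-band law so that the chains of equalities actually close up, rather than any conceptual difficulty — both directions are short equational manipulations once the correct instances are identified. A secondary point worth stating explicitly (it is used implicitly in the sentence preceding the proposition, ``the natural order $\ldots$ coincides with the partial order determined by $(A,\cap)$'') is that $(A,\cap)$ being a semilattice makes $a = a\cap b$ a genuine partial order, so once the biconditional is proved, the identification of the two orders is immediate and requires no further argument.
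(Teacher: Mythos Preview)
Your proposal is correct; both directions go through as you outline. For the reverse direction you are exactly on the paper's track: once you commit, it is the one-liner $a\circ b=(a\cap b)\circ b=(b\cap a)\circ b=b\cap a=a$, using commutativity of $\cap$ to put the first connecting axiom $(x\cap y)\circ x=x\cap y$ into the right form---that is the single substitution you were searching for, and none of your alternative routes through right normality are needed.

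For the forward direction your two-way computation of $a\circ(a\cap b)$ is valid but slightly less direct than the paper's argument, which simply substitutes $a=a\circ b$ into $a\cap b$ and unwinds:
\[
a\cap b=(a\circ b)\cap b=a\circ(b\cap b)=a\circ b=a,
\]
using only the axiom $x\circ(y\cap z)=(x\circ y)\cap z$ (read right to left) and idempotence. Your route additionally invokes the derived full left distributivity; this is harmless since that identity has already been established, but it is an extra ingredient you do not strictly need.
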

\begin{proof}
Suppose $a\leq b$, so that $a=a\circ b$.  Then 
$$a\cap b=(a\circ b)\cap (b\circ b)=(a\circ b)\cap b=a\circ(b\cap b)=a\circ b=a.$$
Conversely, suppose $a=a\cap b$.  Then $a\circ b=(a\cap b)\circ b=a\cap b=a$, so $a\leq b$.
\end{proof}

\begin{lem} \label{star}
For any filter $F$ of $A$ and $x,y\in A$, it is the case that $x,y\in F$ and $(x,y)\in \epsilon_F$ if and only if $x\cap y\in F$.
\end{lem}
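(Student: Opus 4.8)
The plan is to prove both implications directly from the definitions, using the interplay between $\cap$, $\circ$, and the quasiorder $\lesssim$ established in this subsection, notably Proposition \ref{leqcap}. For the forward direction, suppose $x,y\in F$ and $(x,y)\in\epsilon_F$, so there is $e\in F$ with $e\circ x=e\circ y$. First I would produce a single element of $F$ witnessing membership of $x\cap y$ by combining $x$, $y$, and $e$ into the product $g=e\circ x\circ y$ (or a symmetrised variant), which lies in $F$ since $F$ is closed under $\circ$. The key computation is to show $g\lesssim x\cap y$, i.e.\ $(x\cap y)\circ g=g$; here I would unwind $x\cap y$ using the right normal band with intersection law $x\circ(y\cap z)=(x\circ y)\cap z$ together with the already-derived full left distributivity $(x\circ y)\cap(x\circ z)=x\circ(y\cap z)$, and use $e\circ x=e\circ y$ to collapse the two ``branches'' of the meet. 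Once $g\lesssim x\cap y$ with $g\in F$, the up-set property of $F$ gives $x\cap y\in F$.

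For the converse, suppose $x\cap y\in F$. Since $x\cap y\leq x$ and $x\cap y\leq y$ in the semilattice $(A,\cap)$, Proposition \ref{leqcap} together with the fact (proved earlier in the excerpt) that $a\leq b$ implies $a\lesssim b$ yields $x\cap y\lesssim x$ and $x\cap y\lesssim y$; as $F$ is an up-set under $\lesssim$, both $x$ and $y$ lie in $F$. It remains to exhibit $e\in F$ with $e\circ x=e\circ y$: the natural candidate is $e=x\cap y$ itself, and I would verify $(x\cap y)\circ x=x\cap y=(x\cap y)\circ y$ using the law $(x\cap y)\circ x=x\cap y$ directly, and its ``mirror'' $(x\cap y)\circ y=(y\cap x)\circ y=y\cap x=x\cap y$ obtained via commutativity of $\cap$. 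This gives $(x,y)\in\epsilon_F$, completing the proof.

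The main obstacle I anticipate is the forward direction's computation that the chosen witness $g$ satisfies $(x\cap y)\circ g=g$: one must be careful about the order of factors, since $\circ$ is only right normal (not commutative), so rearranging $x$, $y$, $e$ inside products requires repeated appeals to the identity $(u\circ v)\circ w=(v\circ u)\circ w$ rather than free commutativity. I expect that choosing $g$ symmetric enough — something like $g=e\circ(x\cap y)$ after first checking $e\circ(x\cap y)\in F$, or alternatively $g=(e\circ x)\cap y$ which equals $e\circ(x\cap y)$ by the mixed law — will make the verification short, but picking the right form of $g$ is where the care lies. Everything else is routine manipulation with the stated axioms.
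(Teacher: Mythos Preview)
Your proposal is correct and follows essentially the same route as the paper. The converse direction is identical. For the forward direction, the paper cuts through your deliberation over the witness by observing directly that $e\circ(x\cap y)=(e\circ x)\cap(e\circ y)=(e\circ x)\cap(e\circ x)=e\circ x$ via the left distributivity already derived; this single line simultaneously shows the candidate lies in $F$ (as $e\circ x$ with $e,x\in F$) and that $e\circ x\lesssim x\cap y$, so the ``obstacle'' you anticipate dissolves once you pick $g=e\circ(x\cap y)=e\circ x$.
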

\begin{proof}
Suppose $x\cap y\in F$.  Then $x,y\in F$ since $x\cap y \leq x,y$ by Proposition \ref{leqcap}, and so $x\cap y\lesssim x,y$.  But also, $(x\cap y)\circ x=x\cap y=(x\cap y)\circ y$, so $(x,y)\in \epsilon_F$.

Conversely, suppose  $x,y\in F$ and $(x,y)\in \epsilon_F$, so there exists $e\in A$ for which $e\circ x=e\circ y$.  So $e\circ (x\cap y) = (e\circ x)\cap (e\circ y) = (e\circ x)\cap (e\circ x)=e\circ x$, and so $F\ni e\circ x\lesssim x\cap y$, so $x\cap y\in F$.
\end{proof}

\subsection{Minus-algebras}  \label{sub:minus}

We begin with a useful result.

\begin{lem}\label{altqi}
In any minus-algebra, the following law holds:
$$(z-x)\circ s=(z-x)\circ t\And x\circ s=x\circ t\Rightarrow z\circ s=z\circ t.$$
\end{lem}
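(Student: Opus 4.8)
The plan is to leverage the available minus-algebra axioms — in particular the distributive law $(x-y)\circ z=(x\circ z)-y$ and the cancellative quasiequation $s-x=t-x \And x\circ s=x\circ t\Rightarrow s=t$ — to reduce the desired implication to checking that $z\circ s$ and $z\circ t$ agree after applying both $\cdot\;-x$ and $x\circ\cdot$. So suppose $(z-x)\circ s=(z-x)\circ t$ and $x\circ s=x\circ t$; I want $z\circ s=z\circ t$.

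First I would handle the ``outside the domain of $x$'' part. Set $u=z\circ s$ and $v=z\circ t$; the aim is to show $u-x=v-x$ and $x\circ u=x\circ v$, then conclude $u=v$ by the cancellative law. For $u-x$: using the distributive law $(z-x)\circ s=(z\circ s)-x=u-x$ and likewise $(z-x)\circ t=v-x$, the hypothesis $(z-x)\circ s=(z-x)\circ t$ gives immediately $u-x=v-x$. For $x\circ u$: I compute $x\circ u=x\circ(z\circ s)=(x\circ z)\circ s$ by associativity, and similarly $x\circ v=(x\circ z)\circ t$. Now I need $(x\circ z)\circ s=(x\circ z)\circ t$; this should follow from the hypothesis $x\circ s=x\circ t$ together with the fact that $x\circ z\lesssim x$ (since $x\circ(x\circ z)=x\circ z$ by idempotency of $\circ$), using the general right-normal-band fact that if $e\lesssim f$ and $f\circ s=f\circ t$ then $e\circ s=e\circ t$ — exactly the ``because'' clause invoked at the end of the proof of Lemma~\ref{maxeq}. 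Concretely, from $x\circ(x\circ z)=x\circ z$ one gets $(x\circ z)\circ s=(x\circ z)\circ(x\circ s)$ after rearranging with the right-normal band law, then substitute $x\circ t$ for $x\circ s$, then run the computation backwards.

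Having established $u-x=v-x$ and $x\circ u=x\circ v$, the final quasiequational axiom for minus-algebras applied to $u,v$ (with the role of $x$ played by $x$) yields $u=v$, i.e.\ $z\circ s=z\circ t$, as required. The main obstacle — really the only nonroutine point — is the sub-step $(x\circ z)\circ s=(x\circ z)\circ t$: one must be slightly careful that the hypothesis is about $x\circ s$ versus $x\circ t$ and not directly about $(x\circ z)\circ s$, so the $\lesssim$-monotonicity observation must be deployed rather than a naive substitution. Everything else is a direct application of the distributive law, associativity, and the cancellative quasiequation.
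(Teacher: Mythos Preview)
Your proposal is correct and follows essentially the same route as the paper: both arguments set $u=z\circ s$, $v=z\circ t$, obtain $u-x=v-x$ from the distributive law $(z-x)\circ w=(z\circ w)-x$, obtain $x\circ u=x\circ v$ via right normal band manipulations, and then invoke the final quasiequation. The only cosmetic difference is that the paper gets $x\circ(z\circ s)=x\circ(z\circ t)$ by left-multiplying $x\circ s=x\circ t$ by $z$ and then swapping with the right-normal law, whereas you phrase the same computation through the $\lesssim$-monotonicity observation; these are the same step written two ways.
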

\begin{proof}
Assume that $(z-x)\circ s=(z-x)\circ t$ and $x\circ s = x\circ t$ hold; we must show that $z\circ s= z\circ t$.  Now $x\circ s = x\circ t$ implies that $z\circ x\circ s = z\circ x\circ t$, which in turn gives 
\begin{equation}
x\circ (z\circ s) = x\circ (z\circ t),\label{eq:step1}
\end{equation} 
using the laws of domain restriction.  Next $(z-x)\circ s = (z\circ s)-x$ and $(z-x)\circ t=(z\circ t)-x$ so that 
\begin{equation}
(z\circ s)-x= (z\circ t)-x.\label{eq:step2}
\end{equation}   
Applying the final law for minus-algebras to \eqref{eq:step1} and \eqref{eq:step2} gives $z\circ s=z\circ t$ as required. 
\end{proof}

For reasons that will become more plain later, let us say that a filter $F$ of the minus-algebra $(A,-)$ (viewed as a right normal band) is {\em prime} if, whenever $a\in F$, for all $b\in A$, either $b\in F$ or $a-b\in F$.

\begin{lem}  \label{prime-}
Let $F$ be maximally $(a,b)$-separating, for some $a,b\in A$ with $a\not\leq b$.  Then $F$ is prime.
\end{lem}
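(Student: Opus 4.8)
We verify the definition of primeness directly, by contradiction; the two ingredients are Lemma~\ref{maxeq}, used to extract ``agreement on a large set'' for each of two elements outside $F$, and Lemma~\ref{altqi}, used to fuse those two pieces of agreement together. Fix $c\in F$ and $d\in A$, and suppose, against primeness, that $d\notin F$ while also $c-d\notin F$. Applying Lemma~\ref{maxeq} to $d\in A\backslash F$ produces $f_1\in F$ with $(f_1\circ d)\circ a=(f_1\circ d)\circ b$, and applying it to $c-d\in A\backslash F$ produces $f_2\in F$ with $\bigl(f_2\circ(c-d)\bigr)\circ a=\bigl(f_2\circ(c-d)\bigr)\circ b$. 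Set $e:=f_1\circ f_2\circ c$; since $F$ is a filter closed under $\circ$ and contains $f_1,f_2,c$, we have $e\in F$. Everything now reduces to showing $e\circ a=e\circ b$, since this contradicts $F$ being $(a,b)$-separating.

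To obtain $e\circ a=e\circ b$, I would apply Lemma~\ref{altqi} with $z:=e$, $x:=f_1\circ d$, $s:=a$, $t:=b$. One of its hypotheses, $(f_1\circ d)\circ a=(f_1\circ d)\circ b$, is exactly the first equation found above. The other is $\bigl(e-(f_1\circ d)\bigr)\circ a=\bigl(e-(f_1\circ d)\bigr)\circ b$; to secure it, the plan is to prove the quasiorder comparison $e-(f_1\circ d)\lesssim f_2\circ(c-d)$, after which the fact noted inside the proof of Lemma~\ref{maxeq} (whenever $u\lesssim v$ and $v\circ s=v\circ t$ then $u\circ s=u\circ t$) turns the second equation above, $\bigl(f_2\circ(c-d)\bigr)\circ a=\bigl(f_2\circ(c-d)\bigr)\circ b$, into precisely what is required. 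Lemma~\ref{altqi} then delivers $e\circ a=e\circ b$, hence the contradiction, so $c-d\in F$ and $F$ is prime.

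The substantive step, which I expect to be the main obstacle, is the comparison $e-(f_1\circ d)\lesssim f_2\circ(c-d)$; in fact it holds in the stronger natural order, so the natural target is the equation $\bigl(e-(f_1\circ d)\bigr)\circ\bigl(f_2\circ(c-d)\bigr)=e-(f_1\circ d)$. Conceptually this is clear from the functional picture: $\dom(e)=\dom(f_1)\cap\dom(f_2)\cap\dom(c)$ lies in $\dom(f_1)$, so deleting $\dom(f_1)\cap\dom(d)$ from $\dom(e)$ deletes exactly $\dom(d)$, and $\dom(e)\setminus\dom(d)\subseteq\dom(f_2)\cap\dom(c)\setminus\dom(d)=\dom\bigl(f_2\circ(c-d)\bigr)$. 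To push it through abstractly I would work from the minus-algebra exchange law $(x-y)\circ z=(x\circ z)-y$ (used in both directions) and the absorption law $(x-y)\circ x=x-y$, together with $x-0=x$, $0\circ x=0$ and repeated use of $(x\circ y)\circ z=(y\circ x)\circ z$ to permute and collapse the band factors $f_1,f_2,c$; along the way one needs easy auxiliary identities such as ``$u\lesssim v$ implies $u-v=0$'' (immediate from the exchange law, $x-x=0$ and $0\circ x=0$) and the simplification of a minus whose subtrahend is domain-disjoint from its minuend. The computation is routine but fiddly: minus must be handled more carefully than set-theoretic difference, so one must shuffle the subtrahends around to get the relevant domains nested before the cancellations apply. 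With that equation established, the proof is complete.
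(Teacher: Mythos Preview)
Your strategy is sound and will work, but it takes a longer road than the paper's argument, and the step you flag as ``routine but fiddly'' is genuinely the bulk of the effort in your version. Establishing $e-(f_1\circ d)\lesssim f_2\circ(c-d)$ from the minus-algebra axioms does go through, but it is not a one-liner: one route is to show $w:=e-(f_1\circ d)$ satisfies $w\circ d=0$ (via $w\lesssim e\lesssim f_1$ and the law $(x-y)\circ y=0$), then deduce $w-d=w$ using the quasiequation (from $w-(w-d)=d\circ w=0$ and $(w-d)\circ w=w-d$), and finally unwind $(f_2\circ(c-d))\circ w$ by exchange and right-normal-band permutations to reach $w-d=w$. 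So your sketch is honest, but it hides a nontrivial calculation that itself invokes the quasiequation.

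The paper avoids all of this by choosing a different instantiation of Lemma~\ref{altqi}. Rather than fixing $s=a$, $t=b$ and being forced to analyse $e-(f_1\circ d)$, it absorbs the filter witnesses into $s$ and $t$: with $k=f_1\circ f_2$ one applies Lemma~\ref{altqi} to $z=c$, $x=d$, $s=k\circ a$, $t=k\circ b$. The two hypotheses then read $d\circ(k\circ a)=d\circ(k\circ b)$ and $(c-d)\circ(k\circ a)=(c-d)\circ(k\circ b)$, and both follow directly from the outputs of Lemma~\ref{maxeq} by right-normal-band permutation alone, with no minus-algebra manipulation whatsoever. The conclusion $c\circ(k\circ a)=c\circ(k\circ b)$ rearranges to $(c\circ k)\circ a=(c\circ k)\circ b$ with $c\circ k\in F$, giving the contradiction in one line. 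The moral: when using Lemma~\ref{altqi}, push the filter elements into $s$ and $t$ rather than into $z$; this keeps the subtrahend in $z-x$ as the bare element $d$ and sidesteps the comparison you had to prove.
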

\begin{proof} 
First suppose that $x\in F$ and $y\not\in F$, and for a contradiction that $x-y\not\in F$.  Then by Lemma~\ref{maxeq}, there exists $f\in F$ for which $(f\circ y)\circ a = (f\circ y)\circ b$.  Similarly, there exists $h\in F$ for which $(h\circ (x-y))\circ a = (h\circ (x-y))\circ b$.  So letting $k=f\circ h\in F$, we have $y\circ(k\circ a)=y\circ(k\circ b)$ and $(x-y)\circ(k\circ a)=(x-y)\circ(k\circ b)$,
so $(x\circ k)\circ a=x\circ(k\circ a)=x\circ(k\circ b)=(x\circ k)\circ b$ by Lemma \ref{altqi}.  But $x\circ k\in F$, so this contradicts the $(a,b)$-separating property of $F$.  Hence in fact $x-y\in F$. 
\end{proof}

\begin{cor}
The set of all prime filters of $(A,-)$ is separating.
\end{cor}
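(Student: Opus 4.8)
The plan is to combine the two facts just established: Lemma~\ref{maxsep} guarantees that maximally $(a,b)$-separating filters exist whenever $a\not\leq b$, and Lemma~\ref{prime-} says every such filter is prime. So the argument is essentially a one-line deduction, and I would present it that way.

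Concretely, I would fix $a,b\in A$ with $a\not\leq b$; these are the only pairs that the definition of ``separating'' asks about. By Lemma~\ref{maxsep} there is a filter $F$ of $A$ that is maximally $(a,b)$-separating; in particular $F$ is $(a,b)$-separating. By Lemma~\ref{prime-}, $F$ is prime. Hence $F$ witnesses the required separation property and belongs to the set of all prime filters of $(A,-)$. Since $a,b$ were an arbitrary pair with $a\not\leq b$, the set of all prime filters of $(A,-)$ is separating, as claimed.

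There is no real obstacle here: the work has already been done in Lemma~\ref{prime-} (which in turn rests on Lemma~\ref{maxeq} and Lemma~\ref{altqi}) and in the Zorn's Lemma argument behind Lemma~\ref{maxsep}. The only thing to be careful about is bookkeeping: making sure the quantifier structure matches the definition of ``separating'' (a single filter per pair $(a,b)$, not a uniform one), and noting that ``maximally $(a,b)$-separating'' entails ``$(a,b)$-separating'' so that the filter produced genuinely does the separating. I would keep the proof to two or three sentences.
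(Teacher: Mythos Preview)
Your proposal is correct and matches the paper's intended argument: the corollary is stated without proof precisely because it follows immediately from Lemma~\ref{maxsep} (existence of a maximally $(a,b)$-separating filter) together with Lemma~\ref{prime-} (any such filter is prime). Your bookkeeping remarks are also on point.
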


\subsection{Minus-algebras with override}

Of interest is the following alternative way of looking at prime filters in minus-algebras with override, making clear that the termionology is natural.

\begin{pro}  \label{prime}
Let  $(A,-,\sqcup)$ be a minus-algebra with override.  A filter $F$ of the minus-algebra $(A,-)$ is prime if and only if it satisfies the following:
for all $a,b\in A$, if $a\sqcup b\in F$ then $a\in F$ or $b\in F$.
\end{pro}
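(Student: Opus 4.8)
The plan is to prove both implications directly, using the functional-style laws of a minus-algebra with override together with the relation $f\circ g = g-(g-f)$ that connects $\circ$ and $-$.

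\medskip

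For the forward direction, assume $F$ is prime and suppose $a\sqcup b\in F$; I want $a\in F$ or $b\in F$. Using the override law $(x\sqcup y)-x=y-x$ we have $(a\sqcup b)-a = b-a$. Since $a\sqcup b\in F$, primeness applied to $a\sqcup b$ and the element $a$ gives either $a\in F$ (and we are done) or $(a\sqcup b)-a\in F$, i.e.\ $b-a\in F$. In the latter case I use $b-a\lesssim b$ (which holds since $(b-a)\circ b=0$ is false --- rather I should use that $b-a\leq b$, hence $b-a\lesssim b$, which follows from $(b-a)\circ b=0$? no: the correct fact is $(x-y)\circ x=x-y$, so $b-a\leq$ nothing directly, but $b-a\lesssim b$ is what is needed, and this comes from $(b-a)\circ b$...). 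More carefully: from the minus-algebra law $(x-y)\circ x = x-y$ with $x=b$, $y=a$ we get $(b-a)\circ b = b-a$, which says exactly $b-a\lesssim b$. Since $F$ is an up-set under $\lesssim$ and $b-a\in F$, we conclude $b\in F$. This handles the forward direction.

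\medskip

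For the converse, assume $F$ satisfies the $\sqcup$-property and suppose $a\in F$; given $b\in A$ I must show $b\in F$ or $a-b\in F$. The natural candidate is to write $a$ (or a $\lesssim$-smaller element of $F$) as an override of $b$-related pieces and $a-b$. Observe that $(a-b)\sqcup b$ has, outside and inside the domain of $b$, the behaviour making it equal to $a-b$ off $\dom(b)$ and $b$ on $\dom(b)$; the point is to show $a\lesssim (a-b)\sqcup(a\circ b)$ or something similar lies in $F$, or more simply that $a\circ\big((a-b)\sqcup b\big)$ or $(a-b)\sqcup(a\circ b)$ is $\lesssim$-above something in $F$. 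Concretely I expect the clean identity to be: $a\lesssim (a-b)\sqcup b$ fails in general, but $a \lesssim (a-b)\sqcup (a\circ b)$ does hold, since the domain of the right side is $(\dom a\setminus\dom b)\cup(\dom a\cap\dom b)=\dom a$. I would verify this purely equationally from the axioms --- using $x\circ(x\sqcup y)=x$, $(x\sqcup y)-x=y-x$, the band laws, and the interaction laws $(x-y)\circ z=(x\circ z)-y$ and $x\circ 0=0$ --- to get $\big((a-b)\sqcup(a\circ b)\big)\circ a = a$, i.e.\ $a\lesssim (a-b)\sqcup(a\circ b)$. Since $a\in F$ and $F$ is an up-set, $(a-b)\sqcup(a\circ b)\in F$, so by the $\sqcup$-property either $a-b\in F$ (done) or $a\circ b\in F$; and $a\circ b\lesssim b$ (as $b\circ(a\circ b)=(b\circ a)\circ b$, a band computation giving $=a\circ b$ via right normality, hence $a\circ b\lesssim b$), so $b\in F$ by the up-set property.

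\medskip

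The main obstacle I anticipate is pinning down the right ``decomposition'' element in the converse direction --- verifying the identity $\big((a-b)\sqcup(a\circ b)\big)\circ a = a$ (or whichever variant turns out to be correct) from the axioms rather than from the functional picture. This requires careful bookkeeping with the override absorption law $x\circ(x\sqcup y)=x$, the distribution $(s\sqcup t)\circ u=(s\circ u)\sqcup(t\circ u)$, and the minus laws; there is some risk that one needs instead the slightly different element $(a-b)\sqcup b$ together with an extra $\circ a$ restriction, or that the roles of $\lesssim$ and $\leq$ must be tracked precisely. Once the correct identity is isolated, both directions reduce to one application of the hypothesis plus one use of the fact that $F$ is a $\lesssim$-up-set, so the remainder is routine.
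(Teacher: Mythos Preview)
Your proposal is correct and follows essentially the same two-step strategy as the paper's proof: in each direction, produce a suitable element of $F$ and then apply the hypothesis plus the up-set property. Two minor differences are worth noting. First, in the converse the paper uses the simpler element $(a-b)\sqcup b$ rather than your $(a-b)\sqcup(a\circ b)$; since $a\lesssim (a-b)\sqcup b$ already, this avoids the extra step $a\circ b\lesssim b$. Second, the paper does not attempt to verify the needed $\lesssim$-relations equationally from the listed axioms but simply appeals to the functional interpretation (via forward reference to the representation theorem, which does not itself rely on this proposition); so your closing worry about proving $\big((a-b)\sqcup(a\circ b)\big)\circ a=a$ from the axioms is legitimate but is sidestepped in the paper in exactly the way you anticipated as a fallback.
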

\begin{proof}
Suppose $F$ is prime, and that $a\sqcup b\in F$.  Suppose $b\not\in F$.  Then by primeness, $F\ni (a\sqcup b)-b\lesssim a$ (as follows from the functional interpretation afforded by Proposition \ref{overmain}), and so $a\in F$.

Conversely, suppose that for all $a,b\in A$, if $a\sqcup b\in F$ then $a\in F$ or $b\in F$.  Suppose $a\in F$, and $b\in A$.  Then because $a\lesssim (a-b)\sqcup b$ (again from the functional interpretation), we have $(a-b)\sqcup b\in F$ and so $a-b\in F$ or $b\in F$.  So $F$ is prime.
\end{proof}

\subsection{Right normal bands with intersection and update}

Again we need a relevant notion of ``prime" for filters in this case.  Suppose $(A,\circ,\cap,\diamond)$ is a right normal band with intersection and update.  This time we say the filter $F$ of $(A,\circ)$ is {\em weakly prime} if whenever $a\in F$, for all $b\in A$, either $b\in F$ or $a\cap(a\diamond b)\in F$.

\begin{lem}  \label{primediamond}
Suppose $(A,\circ,\cap,\diamond)$ is a right normal band with intersection and update.  Pick $a,b\in A$ with $a\not\leq b$, and let $F$ be maximally $(a,b)$-separating in $(A,\circ)$.  Then $F$ is weakly prime.
\end{lem}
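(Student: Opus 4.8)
The plan is to imitate the proof of Lemma~\ref{prime-} almost verbatim, with the term $x\cap(x\diamond y)$ playing the role that $x-y$ played in the minus-algebra setting, and with axiom~(5) of the definition of a right normal band with intersection and update playing the role of Lemma~\ref{altqi}. Note that axiom~(5) is already phrased in precisely the ``applied'' quasiequational form the argument needs, so, unlike the minus case, no auxiliary derivation in the spirit of Lemma~\ref{altqi} has to be carried out first.

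I would argue by contradiction. Assume $x\in F$, $y\in A\backslash F$, and, for a contradiction, $x\cap(x\diamond y)\notin F$; the aim is to violate the $(a,b)$-separating property of $F$. Since $F$ is maximally $(a,b)$-separating and both $y$ and $x\cap(x\diamond y)$ lie in $A\backslash F$, Lemma~\ref{maxeq} supplies $f\in F$ with $(f\circ y)\circ a=(f\circ y)\circ b$ and $h\in F$ with $\bigl(h\circ(x\cap(x\diamond y))\bigr)\circ a=\bigl(h\circ(x\cap(x\diamond y))\bigr)\circ b$. Put $k=f\circ h$, which lies in $F$ because a filter is closed under $\circ$.

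Left-multiplying the first equality by $h$ and the second by $f$, and then rearranging with associativity together with the right normal band identity $(u\circ v)\circ w=(v\circ u)\circ w$, one brings both equalities to the form $y\circ(k\circ a)=y\circ(k\circ b)$ and $(x\cap(x\diamond y))\circ(k\circ a)=(x\cap(x\diamond y))\circ(k\circ b)$. Axiom~(5), instantiated with parameters $x$, $y$, $k\circ a$, $k\circ b$, now yields $x\circ(k\circ a)=x\circ(k\circ b)$, i.e.\ $(x\circ k)\circ a=(x\circ k)\circ b$. But $x,k\in F$ gives $x\circ k\in F$, contradicting the $(a,b)$-separation of $F$. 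Hence $x\cap(x\diamond y)\in F$ after all, and $F$ is weakly prime. The only place calling for any care is the last round of right normal band juggling used to massage the outputs of Lemma~\ref{maxeq} into the exact shape demanded by axiom~(5); this is routine and identical to what is done in Lemma~\ref{prime-}, so there is no genuine obstacle once one sees that axiom~(5) was crafted to be the right analogue of Lemma~\ref{altqi}.
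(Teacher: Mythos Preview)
Your proposal is correct and follows the paper's own proof essentially line for line: the same contradiction setup, the same two invocations of Lemma~\ref{maxeq}, the same $k=f\circ h$, the same right normal band rearrangement, and the same appeal to axiom~(5) to reach $(x\circ k)\circ a=(x\circ k)\circ b$ with $x\circ k\in F$. There is no substantive difference between your argument and the paper's.
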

\begin{proof} 
First suppose that $x\in F$ and $y\not\in F$, and for a contradiction that $x\cap (x\diamond y)\not\in F$.  Then by Lemma \ref{maxeq}, there is $f\in F$ for which $(f\circ y)\circ a = (f\circ y)\circ b$.  Similarly, there exists $h\in F$ for which $(h\circ (x\cap (x\diamond y)))\circ a = (h\circ (x\cap (x\diamond y)))\circ b$.  So letting $k=f\circ h\in F$, we have $y\circ(k\circ a)=y\circ(k\circ b)$ and $(x\cap (x\diamond y))\circ(k\circ a)=(x\cap (x\diamond y))\circ(k\circ b)$, 
so $(x\circ k)\circ a=x\circ(k\circ a)=x\circ(k\circ b)=(x\circ k)\circ b$ by the implication law for right normal bands with intersection and update.  But $x\circ k\in F$, so this contradicts the $(a,b)$-separating property of $F$.  Hence indeed $x\cap (x\diamond y)\in F$. 
\end{proof}

\begin{cor}
The set of all weakly prime filters of a right normal band with intersection and update is separating.
\end{cor}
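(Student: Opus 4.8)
The plan is to deduce this from Lemma \ref{maxsep} and Lemma \ref{primediamond}, in exactly the same way that the corollary following Lemma \ref{prime-} was obtained from Lemma \ref{maxsep} and Lemma \ref{prime-}. Unwinding the definition of a separating set of filters, it suffices to show that for every pair $a,b\in A$ with $a\not\leq b$ there is a \emph{weakly prime} filter of $A$ that is $(a,b)$-separating.

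First I would fix an arbitrary such pair $a,b\in A$ with $a\not\leq b$. By Lemma \ref{maxsep}, there exists a maximally $(a,b)$-separating filter $F$ in $(A,\circ)$; in particular $F$ is itself $(a,b)$-separating. By Lemma \ref{primediamond}, this maximally $(a,b)$-separating filter $F$ is weakly prime. Hence $F$ is a weakly prime filter witnessing the $(a,b)$-separating condition. Since $a,b$ with $a\not\leq b$ were arbitrary, the collection of all weakly prime filters of $A$ is separating, as required.

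I do not anticipate any genuine obstacle: all the work has already been carried out in Lemma \ref{primediamond}, and the corollary is a one-line consequence. The only point worth a quick sanity check is that the hypothesis $a\not\leq b$ required by Lemma \ref{maxsep} and Lemma \ref{primediamond} coincides precisely with the quantification in the definition of ``separating'', which it does; and that being maximally $(a,b)$-separating entails being $(a,b)$-separating, which is immediate from the definition of maximality.
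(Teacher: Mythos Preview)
Your proposal is correct and follows exactly the approach implicit in the paper: the corollary is an immediate consequence of Lemma~\ref{maxsep} and Lemma~\ref{primediamond}, just as the analogous corollary for prime filters followed from Lemma~\ref{maxsep} and Lemma~\ref{prime-}. There is nothing to add.
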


\subsection{Right normal bands with intersection and override}  \label{intover}

Let $(A,\circ,\cap,\sqcup)$ be a fixed right normal band with intersection and override.  Following Proposition \ref{prime}, we say a filter $F$ of $(A,\circ,\cap,\sqcup)$ is {\em prime} if and only if it satisfies the following:
for all $a,b\in A$, if $a\sqcup b\in F$ then $a\in F$ or $b\in F$. 

\begin{lem}  \label{primecap}
For $a,b\in A$ with $a\not\leq b$, every maximally $(a,b)$-separating filter of $A$ is prime.
\end{lem}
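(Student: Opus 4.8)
The plan is to mimic the proof of Lemma \ref{prime-}, replacing the minus-algebra machinery by the override machinery, and using Proposition \ref{prime}'s characterisation of ``prime'' (the disjunctive one in terms of $\sqcup$) as the target. So suppose $F$ is maximally $(a,b)$-separating for some $a,b\in A$ with $a\not\leq b$, and suppose $s\sqcup t\in F$ but $t\notin F$; we must show $s\in F$. The key functional fact to exploit is that $(s\sqcup t)-t\lesssim s$ in the functional model — equivalently, in the present override-only signature, the domain of $s\sqcup t$ minus the domain of $t$ sits inside the domain of $s$. Since ``$-$'' is not in this signature, I would instead work with the term $s\circ(s\sqcup t)=s$ (law (2) for right normal bands with intersection and override) together with the distributivity law (4), $(s\sqcup t)\circ u=(s\circ u)\sqcup(t\circ u)$, to get a handle on how $s$, $t$ and $s\sqcup t$ interact under $\circ$.

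The core step is an application of Lemma \ref{maxeq}. Since $t\notin F$, there is $f\in F$ with $(f\circ t)\circ a=(f\circ t)\circ b$. I now want to derive a similar ``collapsing'' statement for $s\sqcup t$ directly, or argue toward a contradiction with the $(a,b)$-separating property of $F$. The natural move: from $(f\circ t)\circ a=(f\circ t)\circ b$ and $f\sqcup t\in\cdots$ — rather, since $s\sqcup t\in F$, consider $g=f\circ(s\sqcup t)\in F$ (a filter is closed under $\circ$ and $s\sqcup t\in F$, $f\in F$). Using distributivity, $g\circ a=f\circ(s\sqcup t)\circ a=f\circ\bigl((s\circ a)\sqcup(t\circ a)\bigr)$ and similarly for $b$. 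Now $g\circ a$ and $g\circ b$ differ only through the $s\circ a$ versus $s\circ b$ part, because the $t$-part has been collapsed by $f$ (one has to massage $f\circ((s\circ a)\sqcup(t\circ a))$ so that the $t\circ a$ contribution becomes $t\circ b$, via commuting $f$ past using the right normal band law and $(f\circ t)\circ a=(f\circ t)\circ b$). This should yield $g\circ a$ and $g\circ b$ agreeing iff $(s\circ g)\circ a=(s\circ g)\circ b$ — but $g\notin$ anything useful yet; rather, $s\circ g$ need not be in $F$ since $s$ may not be in $F$.

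So the cleaner route, again parallel to Lemma \ref{prime-}, is a proof by contradiction assuming \emph{both} $s\notin F$ and $t\notin F$: then Lemma \ref{maxeq} gives $f\in F$ with $(f\circ t)\circ a=(f\circ t)\circ b$ and $h\in F$ with $(h\circ s)\circ a=(h\circ s)\circ b$. Put $k=f\circ h\circ(s\sqcup t)\in F$ (here we crucially use $s\sqcup t\in F$). Then $k\circ a$ unfolds via distributivity of $\circ$ over $\sqcup$ into a $\sqcup$-combination of an $s$-part and a $t$-part, each premultiplied by $f\circ h$; the $t$-part is collapsed by $f$, the $s$-part by $h$, and after commuting these idempotents around (right normal band law, plus associativity) one gets $k\circ a=k\circ b$. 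Since $k\in F$, this contradicts $F$ being $(a,b)$-separating. Hence $s\in F$ or $t\in F$, i.e. $F$ is prime. The main obstacle will be the bookkeeping in that last unfolding: showing that $f\circ h\circ\bigl((s\circ a)\sqcup(t\circ a)\bigr)=f\circ h\circ\bigl((s\circ b)\sqcup(t\circ b)\bigr)$ given that $f$ collapses the $t$-part and $h$ the $s$-part. This needs law (4) and the right normal band laws applied carefully — in particular one may need to first rewrite $(s\circ a)\sqcup(t\circ a)$ after premultiplying by $f\circ h$ so that the $\sqcup$ can be split, using that $\circ$ distributes over $\sqcup$ on the left (which follows from law (4) together with the absorption/normality laws, analogously to the ``full left distributivity'' computation done in Subsection \ref{cap} for $\cap$). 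Once that identity is in hand, the contradiction is immediate.
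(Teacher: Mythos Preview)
Your proposal is correct and follows essentially the same route as the paper: assume for contradiction that $s\sqcup t\in F$ with $s,t\notin F$, invoke Lemma~\ref{maxeq} twice to get $f,h\in F$ collapsing the $t$-part and $s$-part respectively, combine via $f\circ h$, and use law~(4) to conclude $g\circ a=g\circ b$ for $g=(s\sqcup t)\circ(f\circ h)\in F$, contradicting separation. Your lingering worry about needing \emph{left} distributivity of $\circ$ over $\sqcup$ is unnecessary: right normality lets you commute $f\circ h$ past $s\sqcup t$ so that law~(4) applies directly to $(s\sqcup t)\circ((f\circ h)\circ a)$ --- which is exactly how the paper arranges the computation.
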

\begin{proof}
Let $F$ be maximally $(a,b)$-separating.  Suppose $s\sqcup t\in F$ and for a contradiction that $s\not\in F$ and $t\not\in F$.  Then the filter $F_s=\{g\in A\mid s\circ f\lesssim g\mbox{ for some }f\in F\}$ as in Lemma \ref{filtextend} contains $F$ and $s\not\in F$ (and $a$), so by maximality of $F$, there exists $g_1\in F_s$ for which $g_1\circ a=g_1\circ b$.  So arguing as in the proof of Lemma \ref{prime-}, $s\circ f_1\circ a=s\circ f_1\circ b$ for some $f_1\in F$.  Similarly, there is $f_2\in F$ for which $t\circ f_2\circ a=t\circ f_2\circ b$.  So letting $f=f_1\circ f_2\in F$, we have
$$s\circ f\circ a=s\circ f\circ b,\ t\circ f\circ a=t\circ f\circ b.$$  
So 
\bea
(s\sqcup t)\circ f\circ a&=&(s\circ f\circ a)\sqcup (t\circ f\circ a)\\
&&\mbox{ by the third law for 1-stacks with intersection and override}\\
&=&(s\circ f\circ b)\sqcup (t\circ f\circ b)\\
&=&(s\sqcup t)\circ f\circ b,
\eea
so $g\circ a=g\circ b$ where $g=(s\sqcup t)\circ f\in F$, contradicting the fact that $F$ is $(a,b)$-separating.  So $F$ is prime.
\end{proof}

\begin{cor}
The set of all prime filters of $(A,\circ,\cap,\sqcup)$ is separating.
\end{cor}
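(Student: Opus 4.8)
The plan is simply to combine Lemma~\ref{maxsep} with Lemma~\ref{primecap}, exactly in the manner of the analogous corollaries following Lemma~\ref{prime-} and Lemma~\ref{primediamond}. Recall that a set $X$ of filters of $A$ is separating precisely when, for every pair $a,b\in A$ with $a\not\leq b$, some member of $X$ is $(a,b)$-separating. So I would fix such a pair $a,b$ and invoke Lemma~\ref{maxsep} to obtain a maximally $(a,b)$-separating filter $F$ of $A$; in particular $F$ is $(a,b)$-separating. Then Lemma~\ref{primecap} tells us that $F$ is prime in the sense defined for $(A,\circ,\cap,\sqcup)$ at the start of this subsection (the $\sqcup$-based condition: if $a\sqcup b\in F$ then $a\in F$ or $b\in F$). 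Hence $F$ is a prime filter witnessing the separation requirement for the pair $(a,b)$.

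Since $a,b$ were an arbitrary pair with $a\not\leq b$, the set of all prime filters of $(A,\circ,\cap,\sqcup)$ satisfies the defining condition for being separating, which is the claim. The only point worth keeping in mind is that the word ``prime'' in the corollary denotes the $\sqcup$-based notion introduced immediately before Lemma~\ref{primecap}, which is exactly the property that lemma establishes, so the two usages coincide and the deduction goes through verbatim.

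I do not anticipate any genuine obstacle: all the substantive work has already been done in Lemma~\ref{maxsep} (existence of maximal separating filters, via Zorn's Lemma applied to the $(a,b)$-separating property, which is nonempty because $a^{\uparrow}$ qualifies by Lemma~\ref{sep}) and in Lemma~\ref{primecap} (the filter-extension argument using the third law for $1$-stacks with intersection and override). This corollary is merely their conjunction.
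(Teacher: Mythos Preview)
Your proposal is correct and is exactly the argument the paper intends: the corollary is stated without proof because it follows immediately from combining Lemma~\ref{maxsep} with Lemma~\ref{primecap}, precisely as you describe.
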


Of course every right normal band with intersection and override is a right normal band with intersection and update, which we define via $a\diamond b=a\circ(a\sqcup b)$ as usual.  We do not know if the weakly prime filters in a right normal band with intersection and override viewed in this way as a right normal band with intersection and update are nothing but its prime filters.  However in general we have the following, which justifies use of the ``weakly" epithet.

\begin{pro}
Every prime filter of the right normal band with intersection and override $(A,\circ,\cap,\sqcup)$ is weakly prime.
\end{pro}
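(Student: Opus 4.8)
The plan is to unwind both the definition of ``prime'' (in the override sense) and the definition of ``weakly prime'' (in the update sense, applied to $\diamond$ defined by $a\diamond b=a\circ(a\sqcup b)$), and to show the required containment by comparing the element $a\cap(a\diamond b)$ with an element already known to be forced into $F$ by primeness. Let $F$ be a prime filter of $(A,\circ,\cap,\sqcup)$; assume $a\in F$ and $b\in A$, and suppose $b\notin F$. We must show $a\cap(a\diamond b)\in F$, i.e.\ $a\cap(a\circ(a\sqcup b))\in F$.

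First I would observe, using the functional interpretation (or directly from the axioms for right normal bands with intersection and override), that $a\lesssim (a-b)\sqcup b$ fails to be available here since we are not in a minus-algebra; instead I would mimic the argument in Proposition~\ref{prime}. Concretely, the relevant fact is that $a\lesssim\bigl((a\sqcup b)\cap b\bigr)\sqcup a'$ for a suitable $a'$ that lies below $a\cap(a\diamond b)$; but it is cleaner to argue as follows. From $a\in F$ and the third axiom for right normal bands with intersection and override, $\bigl((a\sqcup b)\cap b\bigr)\sqcup a=a\sqcup b$, and since $a\lesssim a\sqcup b$ (as $a\circ(a\sqcup b)=a$ by axiom~(2), so $a\leq a\sqcup b$, hence $a\lesssim a\sqcup b$), we get $a\sqcup b\in F$. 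By primeness of $F$, either $b\in F$ or $a\in F$ — which gives nothing new — so instead I would apply primeness to a decomposition of $a$ itself. The right move, paralleling the second half of the proof of Proposition~\ref{prime}, is: since $a\in F$, consider $\bigl(a\cap(a\diamond b)\bigr)\sqcup b$. I claim $a\lesssim \bigl(a\cap(a\diamond b)\bigr)\sqcup b$, which is checked on functions (the domain of $a$ is the disjoint union of the part where $a$ and $b$ agree-after-update, captured by $a\cap(a\diamond b)$, and the part of $\dom(a)$ outside $\dom(b)$, which is contained in $\dom(b)$'s complement but still inside the override by $b$ only where $b$ is defined — one verifies $\dom(a)\subseteq \dom(a\cap(a\diamond b))\cup\dom(b)$). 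Granting the claim, $\bigl(a\cap(a\diamond b)\bigr)\sqcup b\in F$, so by primeness $a\cap(a\diamond b)\in F$ or $b\in F$; since $b\notin F$, we conclude $a\cap(a\diamond b)\in F$, as required.

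The main obstacle I anticipate is verifying the abstract identity or quasi-inequality $a\lesssim \bigl(a\cap(a\diamond b)\bigr)\sqcup b$ purely from the right-normal-band-with-intersection-and-override axioms, rather than from the functional picture. The functional verification is a routine domain computation, but to keep the proof self-contained one should either (i) cite the soundness of this inequality on $P(X,Y)$ together with the completeness theorem — which is circular at this stage — or (ii) derive it equationally. I would take route (ii): expand $a\diamond b=a\circ(a\sqcup b)$, use left distributivity of $\circ$ over $\cap$ and the override axioms $s\circ(s\sqcup t)=s$ and $(s\sqcup t)\circ u=(s\circ u)\sqcup(t\circ u)$, together with $(x\cap y)\circ x=x\cap y$ and $x\circ(y\cap z)=(x\circ y)\cap z$, to show that $\bigl(\bigl(a\cap(a\diamond b)\bigr)\sqcup b\bigr)\circ a=a$. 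This is a finite equational manipulation in the style of the displayed computations already used repeatedly in Section~\ref{sec:axioms} and Section~\ref{sec:filters}, so I would present it as a short chain of equalities rather than belaboring it. Once that chain is in place, the filter-theoretic conclusion via primeness is immediate.
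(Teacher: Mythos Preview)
Your approach is correct and essentially the same as the paper's. The paper uses the identity $x=(x\cap(x\diamond y))\sqcup (y\circ x)$ (an equality rather than your inequality $a\lesssim (a\cap(a\diamond b))\sqcup b$), then observes $y\circ x\lesssim y\notin F$ to rule out the second alternative from primeness; you instead use $b$ directly as the second summand and rule it out immediately since $b\notin F$. These are minor variants of the same decomposition-plus-primeness argument.

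One remark on your circularity worry: the paper itself justifies the key identity by appealing to the functional interpretation, writing that it ``follows from the functional interpretation (and so must follow from the laws)''. This is not circular here, because this particular proposition is a side observation comparing the two notions of primeness and is \emph{not} invoked anywhere in the proof of the representation theorem for $(\circ,\cap,\sqcup)$ (which uses prime filters directly via Lemma~\ref{primecap}). So route~(i) is legitimate in this context, and you need not carry out the equational derivation in route~(ii) unless you want a fully self-contained argument.
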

\begin{proof}
Suppose $F$ is a prime filter, with $x\in F$ and $y\not\in F$.  Now $x=(x\cap(x\diamond y))\sqcup y\circ x$ as follows from the functional interpretation (and so must follow from the laws), so as $y\circ x\lesssim y$ we have $y\circ x\not\in F$ also, and then by primeness, we obtain $x\cap(x\diamond y)\in F$.
\end{proof}

\section{The representations}  \label{sec:rep}

The same basic construction is used throughout in what follows.  The main advantage of this is that it is relatively straightforward to enrich the signature of domain restriction by including other operations defined on partial functions in $P(X,Y)$, such as minus, intersection, override, but especially function composition when $Y=X$, since the construction will represent these operations correctly also for some choice of separating filters ${\mc F}$.  Indeed our approach is to represent algebras with signature containing an operation modelling composition first, and use this to obtain the analogous results for those without composition.

Let $S$ be a 1-stack, and let $F$ be a filter of $A$, viewed as a right normal band.  Let $A^1$ be $A$ with adjoined identity element $1$, so that $1a=a1=a$ for all $a\in A$,  and extend $\epsilon_F$ to $A^1$ by putting~$\{1\}$ in a class by itself, and then let $A_F=(F\cup\{1\})/\epsilon_F$ (recalling that $F$ is a union of $\epsilon_F$-classes by Proposition \ref{congeps}, so the quotient is defined).  For $x\in A^1$, denote by $\overline{x}^F$ the $\epsilon_F$-class containing $x$.  For any $a\in A$, define $\phi^F_a:A_F\rightarrow A_F$,
by setting, for $\overline{x}^F\in A$,
\[
\phi^F_a(\overline{x}^F):=\begin{cases} \overline{xa}^F&\mbox{ providing }xa\in F,\\
\mbox{undefined}&\mbox{ otherwise.}
\end{cases}
\]
So $\phi^F_a$ is a partial function in $PT(A_F)$, for each $a\in A$.  

Now for ${\mathcal F}$ any separating set of filters of $A$, view the $A_F$ ($F\in {\mathcal F}$) as mutually disjoint, let $X=\bigcup \{A_F\mid F\in {\mathcal F}\}$ be their disjoint union, and let $\phi_a=\bigcup \{\phi^F_a\mid F\in {\mathcal F}\}$ be the disjoint union of the $\phi^F_a$ as $F$ ranges across ${\mathcal F}$; so $\phi_a\in PT(X)$ for all $a\in A$.  

\begin{pro}  \label{main}
With the above definitions, the mapping $\Phi: A\rightarrow PT(X)$ given by $\Phi(a)=\phi_a$ for all $a\in S$ is a 1-stack embedding, mapping any zero element to the empty function.
\end{pro}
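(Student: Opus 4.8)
The plan is to verify that $\Phi$ is (i) well-defined with each $\phi_a$ a partial function, (ii) a homomorphism for both $\circ$ and $\cdot$, (iii) injective, and (iv) sends any zero to the empty function. Most of these reduce to pointwise computations inside a single quotient $A_F$, so I would fix one $F \in \mathcal F$ throughout and work with $\phi_a^F$, recalling that $\epsilon_F$ is a congruence on $(A,\circ)$ (Proposition \ref{congeps}) and that $F$ is a union of $\epsilon_F$-classes, so all the quotient operations are well-defined. That $\phi_a^F$ is a genuine partial function follows because $\epsilon_F$ is a congruence: if $\overline{x}^F = \overline{x'}^F$ then $\overline{xa}^F = \overline{x'a}^F$, and the domain condition $xa \in F$ is $\epsilon_F$-invariant since $F$ is a union of classes.

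\textbf{Homomorphism for composition.} For the composition law, I would compute $\phi_a^F \cdot \phi_b^F$ as a partial function: this is the map sending $\overline{x}^F$ to $\phi_b^F(\phi_a^F(\overline{x}^F)) = \overline{xab}^F$, defined precisely when $xa \in F$ \emph{and} $xab \in F$. The target $\phi_{ab}^F$ sends $\overline{x}^F$ to $\overline{x(ab)}^F$ when $x(ab) \in F$. Associativity of $\cdot$ in $A^1$ gives equality of values, so the only issue is matching domains: I must show $x(ab) \in F \Rightarrow xa \in F$. This holds because $xab = (xa)(b) \lesssim$ ... no --- rather, $xab \in F$ and $xa \circ (xab) = ?$; the cleaner route is that in a $1$-stack $xa \circ xab = xa \circ (xa \cdot b)$, and one uses the $1$-stack law $u \circ (vw) = (u\circ v)w$ together with idempotency of $\circ$ to see $xa \lesssim xab$ fails in general --- so instead I note $xab \lesssim xa$ is what we want, i.e. $xa \circ (xab) = xab$. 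Indeed $xa \circ (xa\cdot b) = (xa \circ xa)\cdot b = xa \cdot b = xab$ by the $1$-stack laws, so $xab \lesssim xa$, and since $F$ is an up-set under $\lesssim$, $xab \in F$ forces $xa \in F$. Hence the domains coincide and $\Phi(ab) = \Phi(a)\cdot\Phi(b)$.

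\textbf{Homomorphism for domain restriction.} For $\circ$, I would compute $\phi_a^F \circ \phi_b^F$: by the definition of domain restriction of partial functions, this sends $\overline{x}^F$ to $\phi_b^F(\overline{x}^F) = \overline{xb}^F$, defined when $xa \in F$ (the domain of $\phi_a^F$) \emph{and} $xb \in F$. The target $\phi_{a\circ b}^F$ sends $\overline{x}^F$ to $\overline{x(a\circ b)}^F$ when $x(a\circ b) \in F$. Using the $1$-stack identity $x(a\circ b) = xa \circ xb$ we get the value $\overline{xa \circ xb}^F$; and since $xa \circ xb \lesssim xb$ one has $xa\circ xb \in F \Rightarrow xb \in F$, while $xa \circ xb = (xa)\circ(xb)$ with $xb \in F$ and $xa \circ xb \sim$ requires care: I need $xa \circ xb \in F \iff (xa \in F$ and $xb \in F)$. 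The forward direction: $xa \circ xb \lesssim xb$ gives $xb \in F$, and $xa \circ xb \lesssim xa$ (since $xa \circ (xa \circ xb) = xa \circ xb$) gives $xa \in F$. The backward direction: if $xa, xb \in F$ then $xa \circ xb \in F$ because $F$ is closed under $\circ$. So the domains match, and on that domain the values $\overline{xb}^F$ and $\overline{xa\circ xb}^F = \overline{(xa)\circ(xb)}^F$ agree --- wait, these need not be literally equal elements, but they are $\epsilon_F$-related: pick $e = xa \in F$, then $e \circ (xa\circ xb) = xa \circ xb$ and $e \circ (xb) = xa \circ xb$, so $(xa\circ xb, xb) \in \epsilon_F$, giving $\overline{xa\circ xb}^F = \overline{xb}^F$. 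Thus $\Phi(a\circ b) = \Phi(a)\circ\Phi(b)$.

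\textbf{Injectivity and the zero.} The main obstacle, and the place where the separating hypothesis on $\mathcal F$ is used, is injectivity. Suppose $a \neq b$ in $A$; since $\leq$ is a partial order (Proposition 3.1), either $a \not\leq b$ or $b \not\leq a$, say $a \not\leq b$. By the separating property there is $F \in \mathcal F$ that is $(a,b)$-separating: $a \in F$ and no $e \in F$ satisfies $e \circ a = e \circ b$. Then consider the point $\overline{1}^F \in A_F$ (the adjoined identity, in its own $\epsilon_F$-class): $\phi_a^F(\overline{1}^F) = \overline{a}^F$ is defined because $1\cdot a = a \in F$. If $\phi_a = \phi_b$ then in particular $\phi_b^F(\overline{1}^F) = \overline{b}^F$ is defined, so $b \in F$, and $\overline{a}^F = \overline{b}^F$, i.e. $(a,b) \in \epsilon_F$, i.e. $e \circ a = e \circ b$ for some $e \in F$ --- contradicting that $F$ is $(a,b)$-separating. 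Hence $\phi_a \neq \phi_b$ and $\Phi$ is injective. Finally, if $0$ is a zero of $S$, then for any $x \in A^1$ we have $x\cdot 0 = 0 \notin F$ (as $F$ is proper --- it is $(a,b)$-separating for some pair, hence cannot contain $0$ since $0 \circ s = 0$ makes $0$ below everything and a proper filter is an up-set not containing... more directly, if $0 \in F$ then $0 \lesssim$ every element forces $F = A$, but $F$ must omit, e.g., elements not $\epsilon_F$-related appropriately; in any case properness gives $0 \notin F$), so $\phi_0^F$ is nowhere defined, whence $\phi_0 = \emptyset$. Assembling (i)--(iv) completes the proof that $\Phi$ is a $1$-stack embedding sending $0$ to the empty function.
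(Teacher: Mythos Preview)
Your argument is essentially the same as the paper's, and is correct in outline: you verify well-definedness, the two homomorphism conditions, injectivity via a separating filter evaluated at $\overline{1}^F$, and the zero case. In fact your treatment of the composition homomorphism is more explicit than the paper's, which simply appeals to Schein's ``determinative pairs'' machinery after checking that $\overline{F}$ is a right ideal; your direct verification that $xab \lesssim xa$ via $xa\circ(xa\cdot b)=(xa\circ xa)b=xab$ accomplishes the same thing by hand.

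There is one genuine slip in your well-definedness step. You write that $\phi_a^F$ is a partial function ``because $\epsilon_F$ is a congruence,'' but Proposition~\ref{congeps} only establishes that $\epsilon_F$ is a congruence on $(A,\circ)$, not that it is compatible with right multiplication by $\cdot$. What you actually need is that $(x,x')\in\epsilon_F$ implies $(xa,x'a)\in\epsilon_F$; the paper proves this directly in its opening paragraph using the 1-stack law $e\circ(xa)=(e\circ x)a$: if $e\circ x=e\circ x'$ then $e\circ(xa)=(e\circ x)a=(e\circ x')a=e\circ(x'a)$. Once this is in place, your claim that ``the domain condition $xa\in F$ is $\epsilon_F$-invariant since $F$ is a union of classes'' follows. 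Your argument for the zero is also a bit tangled; the clean version (implicit in the paper) is that $0$ is $\lesssim$-least, so any proper filter omits it --- and the filters in $\mathcal F$ are taken to be proper throughout the construction.
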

\begin{proof}
We first show that if $x,y\in A$ and $(x,y)\in \epsilon_F$, then for all $a\in A$, $(xa,ya)\in \epsilon_F$ (noting that if $x=1$, this is true also).  If $e\circ x=e\circ y$ then $e\circ (xa)=e\circ x\circ (xa) =e\circ y\circ (xa)=y\circ e\circ (xa)=y\circ ((e\circ x)a)=y\circ ((e\circ y)a)=y\circ e\circ (ya)=e\circ y\circ (ya)=e\circ (ya)$, so $(xa,ya)\in \epsilon_F$. 

Next we show that $\overline{F}$, the complement of $F$ in $A$, is a right ideal if non-empty.  For $s\in \overline{F}$, because $s\circ (st)=st$ for all $t\in A$, it follows that $st\lesssim s$, so if $st\in F$ then $s\in F$, a contradiction, so $st\in \overline{F}$ also.   We note also that $\overline{F}$ is a union of $\epsilon_F$-classes, from Proposition \ref{congeps}.

It now follows from the theory of determinative pairs (due to Boris Schein and since used in many settings) that the mapping $A\rightarrow PT(A_F)$ given by $a\mapsto \psi^F_a$ is a semigroup homomorphism, and indeed that $\Phi$ is a homomorphism as well, since the computation of the operations takes place independently on each $A_F$-patch of $X$, for those elements of $PT(X)$ in the range of $\Phi$.  

If $s\not\leq t$, there exists $F\in {\mathcal F}$ that is $(s,t)$-separating.  So $\psi^F_s(1)$ is defined, and even if $\psi^F_t(1)$ is defined as well, so that $t\in F$, it would be the case that $(s,t)\not\in \epsilon_F$ and so $\psi^F_s(1)\neq\psi^F_t(1)$.  So $\psi^F(s)\not\subseteq \psi^F(t)$, and so $\Phi(s)\not\subseteq \Phi(t)$.  This shows that $\Phi$ is an embedding.

Next we show $\Phi$ respects domain restriction.  Now for $s,t\in A$, $\Phi(s\circ t)$ is defined at  $\overline{x}^F\in A_F$, if and only if $\psi^F(s\circ t)$ is defined at $\overline{x}^F$, if and only if $(xs)\circ (xt)=x(s\circ t)\in F$, that is, $xs,xt\in F$ (since $xs\circ xt \lesssim xs,xt$), or $\psi^F(s),\psi^F(t)$ are both defined at $\overline{x}^F$, so $\Phi(s),\Phi(t)$ are both defined at $\overline{x}^F$.  For such $\overline{x}^F$, $x(s\circ t)=xs\circ(xs\circ xt)=xs\circ xt$, so since $xs\in F$, we have $(x(s\circ t),xt)\in \epsilon_F$, and so $\psi^F_{s\circ t}(\ol{x})=\psi^F_t(\ol{x})$, and so $\Phi(s\circ t)(\ol{x}^F)=\Phi(t)(\ol{x}^F)$.  So by definition,  $\Phi(s\circ t)=\Phi(s)\circ \Phi(t)$. 

If $(S,\circ)$ has a zero element $0$, then it is a smallest element under $\lesssim$, so $0\not\in F$ for any filter $F\in {\mathcal F}$, and so $\Phi(0)=\varnothing$.
\end{proof}

\begin{cor}
The algebras of partial functions of signature $(\cdot,\circ)$ are axiomatised as the class of 1-stacks, and the algebras of partial functions of signature $(\circ)$ are axiomatised as the class of right normal bands.
\end{cor}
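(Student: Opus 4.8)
The plan is to read off both statements directly from Proposition \ref{main}, treating soundness and completeness separately. Soundness is already essentially recorded in Section \ref{sec:axioms}: $(PT(X),\cdot,\circ)$ satisfies the 1-stack laws, and $(P(X,Y),\circ)$ is a right normal band. Since the defining laws in each case are equations, they are inherited by subalgebras, so every algebra of partial functions of signature $(\cdot,\circ)$ is a 1-stack and every algebra of partial functions of signature $(\circ)$ is a right normal band. It remains to show the converses, i.e.\ that every abstract structure of the relevant type is isomorphic to a functional one.

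For the $(\cdot,\circ)$ case, given a 1-stack $(A,\cdot,\circ)$ I would take $\mathcal F$ to be the set of all filters of its right normal band reduct $(A,\circ)$. This set is separating by the Proposition following Lemma \ref{sep}, and it is non-empty since $A$ itself is a filter; moreover each $A_F$ contains the class $\overline 1^F$, so the set $X$ constructed in the paragraph before Proposition \ref{main} is non-empty and $PT(X)$ is a legitimate carrier. Proposition \ref{main} then yields a 1-stack embedding $\Phi\colon A\to PT(X)$, so $A$ is isomorphic to the subalgebra $\Phi(A)$ of $(PT(X),\cdot,\circ)$, which is an algebra of partial functions of signature $(\cdot,\circ)$. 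Together with soundness this establishes the first claim.

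For the $(\circ)$ case, given a right normal band $(A,\circ)$ I would first invoke Corollary \ref{stackembed} to embed it, as a right normal band, into the $\circ$-reduct of some 1-stack with zero $(B,\cdot,\circ)$, and then apply the case just handled to $B$ to obtain a 1-stack embedding $\Phi\colon B\to PT(X)$. Forgetting the composition operation, $\Phi$ restricts to an embedding of $(B,\circ)$, hence of $(A,\circ)$, into $(PT(X),\circ)$, which is an algebra of partial functions of signature $(\circ)$. Combined with soundness this gives the second claim.

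I do not expect a genuine obstacle in this corollary, as the real work is done by Proposition \ref{main} (and, upstream of it, the filter constructions of Section \ref{sec:filters}). The only points needing a moment's care are the non-emptiness of $X$, noted above, and the decision in the composition-free case to route through the zero-adjunction of Corollary \ref{stackembed} rather than attempting to feed a structure lacking a composition operation directly into Proposition \ref{main}, which is stated for 1-stacks.
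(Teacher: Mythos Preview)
Your proposal is correct and follows exactly the route the paper intends: Proposition \ref{main} handles the 1-stack case directly, and the composition-free case is obtained by first embedding the right normal band into a 1-stack via Corollary \ref{stackembed} and then forgetting composition, precisely as you describe. The paper does not spell out a separate proof of this corollary (it simply cites \cite{schein} and \cite{vagner}), but your argument is the one implicit in its setup and mirrors the pattern made explicit later for Corollary \ref{coralgebra}.
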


Each of these facts is well-known; see \cite{schein} and \cite{vagner}.  The value of the construction just used is that it is sufficiently versatile to admit representation theorems for richer signatures as well, including the one considered in \cite{minusover}.  We now present a single result that addresses every one of the cases so far considered.  

\begin{thm} \label{main2}
Let $(A,\cdot,\circ,\ldots)$ be a 1-stack that is enriched in one of the ways described in Section~\ref{sec:axioms}, and recall the 1-stack embedding $\Phi: A\rightarrow PT(X)$ as in Proposition \ref{main}.
\ben
\item  \label{intmain} If $(A,\cdot,\circ,\cap)$ is a 1-stack with intersection, then for any choice of separating ${\mc F}$, $\Phi$ respects intersection.
\item  \label{minusmain} If $(A,\cdot,-)$ is a minus-semigroup, with ${\mc F}$ the set of all prime filters of $(A,-)$, then $\Phi$ respects minus.
\item  \label{overmain} If $(A,\cdot,-,\sqcup)$ is a minus-semigroup with override, with ${\mc F}$ the set of all prime filters of $(A,-)$, then $\Phi$ respects minus and override.  
\item  \label{updatemain} If $(A,\cdot,-,\diamond)$ is a minus-semigroup with update, with ${\mc F}$ the set of all prime filters of $(A,-)$, then $\Phi$ respects minus and update.
\item \label{minusmore} If $(A,\cdot,-,\ldots)$ is any of the last three cases involving minus, and has intersection added to it by assuming the 1-stack with intersection laws, then $\Phi$ respects intersection.
\item \label{csqmain}  If $(A,\circ,\cap,\sqcup,\cdot)$ is a 1-stack with intersection and override, and ${\mc F}$ is the set of all prime filters of $(A,\circ,\cap,\sqcup)$, then $\Phi$ respects $\cap$ and $\sqcup$.  
\item  \label{diamondmain} If $(A,\cdot,\circ,\cap,\diamond)$ is a 1-stack with intersection and update, and ${\mc F}$ is the set of all weakly prime filters of $(A,\diamond,\cap)$, then $\Phi$ respects $\cap$ and $\diamond$.
\een
\end{thm}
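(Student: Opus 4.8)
The plan is to treat each of the seven items in turn, but to organise the work so that most of it is reused. Throughout, $\Phi$ is the embedding of Proposition~\ref{main}, built from a separating set ${\mc F}$ of filters, and the key point to keep in mind is that each operation is computed ``patchwise'': on the $A_F$-patch of $X$, the value $\Phi(a)(\ol{x}^F)$ is $\ol{xa}^F$ when $xa\in F$ and undefined otherwise. So for any binary operation $\star$ on $A$ and any $F\in{\mc F}$, to show $\Phi(a\star b)$ agrees with the functionally-computed $\Phi(a)\star\Phi(b)$ on the $A_F$-patch, one must check two things at each $\ol{x}^F$: (i) the two sides have the same domain of definition, which amounts to a membership-in-$F$ condition on $xa$, $xb$, $x(a\star b)$; and (ii) where both are defined, the values (i.e.\ the relevant $\epsilon_F$-classes) coincide. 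For item~\eqref{intmain} this is the easiest case: $\Phi(a\cap b)$ is defined at $\ol{x}^F$ iff $x(a\cap b)=xa\cap xb\in F$, which by Lemma~\ref{star} holds iff $xa,xb\in F$ and $(xa,xb)\in\epsilon_F$ — and that is exactly the condition for $\Phi(a)(\ol x^F)$ and $\Phi(b)(\ol x^F)$ to be defined \emph{and equal}, i.e.\ for $(\Phi(a)\cap\Phi(b))(\ol x^F)$ to be defined; and in that case all three classes agree since $xa\cap xb\leq xa,xb$ gives $(x(a\cap b),xa)\in\epsilon_F$. Note this used only that $F$ is a filter, which is why ``any choice of separating ${\mc F}$'' suffices; item~\eqref{minusmore} is then immediate, since it just bolts the intersection laws onto the minus cases and the argument for $\cap$ is unchanged.

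For the minus cases \eqref{minusmain}--\eqref{updatemain} the role of \emph{prime} (resp.\ weakly prime) filters is precisely to handle the domain-matching in (i). Consider \eqref{minusmain}: $\Phi(a-b)(\ol x^F)$ is defined iff $x(a-b)=(xa)-(xb)\in F$, while $(\Phi(a)-\Phi(b))(\ol x^F)$ is defined iff $xa\in F$ and $xb\notin F$. One direction is easy from the minus-algebra laws (if $(xa)-(xb)\in F$ then since $(xa)-(xb)\lesssim xa$ we get $xa\in F$, and $xb\in F$ would force $(xa)-(xb)\circ xb=0\in F$, impossible for a proper filter); the converse — from $xa\in F$, $xb\notin F$ deduce $(xa)-(xb)\in F$ — is exactly primeness of $F$ applied with $xa\in F$ and $xb\notin F$ (using that $F$ is prime, so $b'\in F$ or $a'-b'\in F$). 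Once domains match, the value at $\ol x^F$ is $\ol{(xa)-(xb)}^F$ on both sides, but one must check $((xa)-(xb),\,xa)\in\epsilon_F$, which follows from $(xa-xb)\circ xa=xa-xb$ and $xa-xb\in F$. Items~\eqref{overmain} and~\eqref{updatemain} add $\sqcup$ and $\diamond$: here one uses the characterisation of prime filters for override in Proposition~\ref{prime} (namely $a\sqcup b\in F\Rightarrow a\in F$ or $b\in F$, and its consequence $a\in F\Rightarrow (a-b)\sqcup b\in F$, hence $a-b\in F$ or $b\in F$) to do the domain-matching for $\sqcup$: $\Phi(a\sqcup b)$ is defined at $\ol x^F$ iff $x(a\sqcup b)=xa\sqcup xb\in F$ iff ($xa\in F$ or $xb\in F$), which is exactly the domain of $\Phi(a)\sqcup\Phi(b)$; the value-matching uses the override laws $(x\sqcup y)-x=y-x$ and $x\circ(x\sqcup y)=x$ to identify which class is taken. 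For $\diamond$ one argues similarly from the minus-algebra-with-update laws, in particular $(x\diamond y)-x=x-(x\diamond y)=0$ (which pins down the domain of $x\diamond y$ as that of $x$) and $x\circ y=y\circ(x\diamond y)$.

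Items~\eqref{csqmain} and~\eqref{diamondmain} are the combinations without minus. For~\eqref{csqmain}: $\cap$ is handled verbatim as in~\eqref{intmain}; for $\sqcup$, use the definition of prime filter of $(A,\circ,\cap,\sqcup)$ given just before Lemma~\ref{primecap} — $a\sqcup b\in F\Rightarrow a\in F$ or $b\in F$ — together with the override laws (2)--(4) for right normal bands with intersection and override, in particular $s\circ(s\sqcup t)=s$ and $((s\sqcup t)\cap t)\sqcup s=s\sqcup t$, to match both domains and values; the relevant separating fact is the Corollary after Lemma~\ref{primecap}. For~\eqref{diamondmain}, use weakly prime filters of $(A,\circ,\cap,\diamond)$ (separating by the Corollary after Lemma~\ref{primediamond}): the domain of $\Phi(x\diamond y)$ at $\ol z^F$ is governed by $z(x\diamond y)\in F$, and law (2) $s=(s\diamond t)\circ s$ together with $s\diamond t=s\diamond(s\diamond t)$ and $s\circ t=t\circ(s\diamond t)$ does the work; the weak-primeness hypothesis ($x\in F\Rightarrow y\in F$ or $x\cap(x\diamond y)\in F$) is what lets one conclude $z(x\diamond y)\in F$ when it should be. I expect the main obstacle to be the bookkeeping in~\eqref{csqmain} and~\eqref{diamondmain}: unlike the minus cases, there is no ``$0$'' to anchor the domain computations and the override/update laws available are weaker, so pinning down exactly which $\epsilon_F$-class the functional value $\phi_a\sqcup\phi_b$ (resp.\ $\phi_a\diamond\phi_b$) lands in at a given point — and proving it equals $\ol{z(a\sqcup b)}^F$ — requires careful use of laws (3) and (4) and of Lemma~\ref{star}. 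Everything else is a routine patchwise verification of the two conditions (i), (ii) above, using the quasiequational ``separating'' laws only to get domain-matching and the equational laws only to get value-matching.
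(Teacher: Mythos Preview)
Your plan is sound and for items \eqref{intmain}, \eqref{minusmain}, \eqref{minusmore}, \eqref{csqmain} and \eqref{diamondmain} it matches the paper's proof essentially line for line, including the use of Lemma~\ref{star} for $\cap$, primeness for the domain of minus, and the case splits for $\sqcup$ and $\diamond$ in items \eqref{csqmain} and \eqref{diamondmain}.

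The one place you diverge from the paper is in items \eqref{overmain} and \eqref{updatemain}. You propose a direct patchwise verification for $\sqcup$ and $\diamond$ in the minus-semigroup setting, parallel to what you (and the paper) do in \eqref{csqmain} and \eqref{diamondmain}. The paper instead uses an \emph{abstract definability} argument: once \eqref{minusmain} gives a functional representation respecting $-$ and $\circ$, transport $\sqcup$ (resp.\ $\diamond$) to the image via $\Phi$; the two override laws $(x\sqcup y)-x=y-x$ and $x\circ(x\sqcup y)=x$ (resp.\ the update laws) then force the transported operation to coincide with functional override (resp.\ update), purely by reasoning about partial functions. This bypasses any need to compute $x(a\sqcup b)$ inside $A$.

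Your direct route does work, but note a hidden step: you write ``$x(a\sqcup b)=xa\sqcup xb$'' as if it were given, yet the law $s(t\sqcup u)=st\sqcup su$ is \emph{not} among the axioms for minus-semigroups with override (contrast Subsection~\ref{sub:intover}, where it is an axiom for 1-stacks with intersection and override). It is derivable---use the minus-semigroup law $s(t-u)=st-su$, the 1-stack law $s(t\circ u)=st\circ su$, the two override laws, and then the minus-algebra quasiequation to conclude $x(a\sqcup b)=xa\sqcup xb$---and similarly for $\diamond$. So your approach is correct once this is filled in; the paper's abstract-definability trick simply avoids the derivation (and the ensuing value-matching case analysis) altogether.
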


\begin{proof}
For (\ref{intmain}), note that $\ol{x}^F$ is in the domain of $\Phi(s)\cap \Phi(t)$ if and only if it is in the domain of $\psi^F_s\cap \psi^F_t$, that is, $xs\in F$, $xt\in F$, and $(xs,xt)\in \epsilon_F$, which by the previous lemma is equivalent to saying that $x(s\cap t)=xs\cap xt\in F$ (using 3 in the definition if $x\in A$, and the fact that if $x=1$, this is trivially true), which is to say that $\ol{x}^F$ is in the domain of $\psi^F_{s\cap t}=\Phi(s\cap t)$.  So the domains of $\Phi(s\cap t)$ and $\Phi(s)\cap \Phi(t)$ coincide.  But for such $\ol{x}^F$ in this common domain, $(xs)\cap(xs\cap xt)=xs\cap xt\in F$, so $(xs,xs\cap xt)\in \epsilon_F$ by Lemma \ref{star}, and so 
$$\Phi(s\cap t)(\ol{x}^F)=\psi^F_{s\cap t}(\ol{x}^F)=\ol{x(s\cap t)}^F=\ol{(xs\cap xt)}^F=\ol{xs}^F=\psi^F_s(\ol{x}^F)=\Phi(s)(\ol{x}^F),$$ and similarly $\Phi(s\cap t)(\ol{x}^F)=\Phi(t)(\ol{x}^F)$.  So $\Phi(s\cap t)=\Phi(s)\cap \Phi(t)$.

For (\ref{minusmain}), $\ol{x}^F\in X$ is in the domain of $\Phi(s)-\Phi(t)$ if and only if $xs\in F$ and $xt\not\in F$, which implies that $x(s-t)=xs-xt\in F$ by primeness, and the minus-semigroup law if $x\in A$, with it being trivial if $x=1$; indeed it is equivalent to this since if $xs-xt\in F$ then $xs\in F$ since $xs-xt\lesssim xs$ (as $xs\circ(xs-xt)=xs-xt$ by the second additional law for minus-semigroups), and if $xt\in F$ then by the third law for minus-algebras, $(xs-xt)\circ (xt)=0\in F$, a contradiction.  But $xs-xt\in F$ simply states that $\Phi(s-t)$ is defined at $\ol{x}^F$.  When this happens, $(xs-xt)\circ (xs-xt)=xs-xt=(xs-xt)\circ xs$, so $(xs,xs-xt)\in \epsilon_F$, and so $\Phi(s-t)=\Phi(s)$.  So by definition, $\Phi(s-t)=\Phi(s)-\Phi(t)$.

For (\ref{overmain}), it suffices to show that a minus-algebra $(A,-)$ of functions equipped with an operation $\sqcup$ satisfying the additional two laws for minus-algebras with override that involve $\sqcup$ must have $\sqcup$ equal to override.  (In \cite{jacstoverup}, the general idea behind this is called  {\em abstract definability}, in this case of $\sqcup$ from minus and domain restriction.)  The second law says that restricting the partial function $x\sqcup y$ to the domain of $x$ yields $x$, while the first says that if this part of $x\sqcup y$ is removed, the remainder is the same as what one obtains by restricting $y$ to where $x$ is undefined.  So in summary, $x\sqcup y$ is the union of $x$ with the restriction of $y$ to where $x$ is undefined, which is nothing but their preferential union, or override of $y$ by $x$.  The proof of (\ref{updatemain}) is very similar.

For each of the signatures involving minus as in (\ref{minusmore}), if the 1-stack with intersection axioms are added, then $\Phi$ respects intersection by (\ref{intmain}) (already shown).

Now we turn to (\ref{csqmain}).  Let $s,t\in A$.  We must show that $\Phi(s\sqcup t)=\Phi(s)\sqcup \Phi(t)$ as partial functions.  First we show their domains are equal.  Now $\ol{x}^F\in \dom(\Phi(s\sqcup t))$ says that $x(s\sqcup t)\in F$.  But $x(s\sqcup t)=xs\sqcup xt$ by 3 in the definition of 1-stacks with intersection and override if $x\in A$, and trivially if $x=1$, so this is equivalent to saying that $xs\in F$ or $xt\in F$, which is equivalent to saying that $\ol{x}^F$ is in $\dom(\Phi(s))\cup \dom(\Phi(t))$.  So the domains are equal.  For $\ol{x}^F$ in this common domain, we consider two cases.  (i) If $\Phi(s)$ is defined at $\ol{x}$, so that $xs\in F$, then $xs\circ(xs\sqcup xt)=xs=xs\circ xs$ by the first law for right normal bands with intersection and override, so $\Phi(s\sqcup t)$ and $\Phi(s)$ agree at $\ol{x}$.  (ii) If $\Phi(s)$ is not defined at $\ol{x}$, so that $xs\not\in F$, then necessarily $xt\in F$, and also $F\ni xs\sqcup xt=((xs\sqcup xt)\cap xt)\sqcup xs$ by (3) in the definition of right normal bands with intersection and override, so by primeness of $F$, $(xs\sqcup xt)\cap xt\in F$, and so by Lemma \ref{star}, $x(s\sqcup t)=xs\sqcup xt\ \epsilon_F\ xt$, and so $\Phi(s\sqcup t)$ and $\Phi(t)$ agree at $\ol{x}^F$.  Overall then $\Phi(s\sqcup t)$ and $\Phi(s)\sqcup \Phi(t)$ agree at all $\ol{x}^F$ in their (equal) domains, hence are the same partial functions.

Finally, we consider (\ref{diamondmain}).  Pick $s,t\in A$.  Now $\ol{x}^F\in X$ is in the domain of $\Phi(s)\diamond\Phi(t)$ if and only if $\ol{x}^F$ is in the domain of $\Phi(s)$, that is, $xs\in F$, or equivalently (even if $x=1$ as for $\sqcup$ above), $x(s\diamond t)=xs\diamond xt\in F$ (since $xs\diamond xt\sim xs$), that is, $\ol{x}^F$ is in the domain of $\Phi(s\diamond t)$.  For such $\ol{x}^F$, we consider cases.  (i) If $\ol{x}^F$ is in the domain of $\Phi(t)$, then $xt\in F$ and so $xt\circ xs\in F$, so $(xt\circ xs)\circ xt=xs\circ xt=xt\circ (xs\diamond xt)=xt\circ (xs\circ (xs\diamond xt))=(xt\circ xs)\circ (xs\diamond xt)$, we have $(x(s\diamond t),xt)\in \epsilon_F$, and so $\Phi(s\diamond t)(\ol{x}^F)=\Phi(t)(\ol{x}^F)$.  (ii) If $\ol{x}^F$ is not in the domain of $\Phi(t)$, then $xt\not\in F$ and so by the weakly prime property, $xs\cap (xs\diamond xt)\in F$, and so 
because $xs\diamond xt=x(s\diamond t)$, we have $(xs,x(s\diamond t))\in \epsilon_F$ by Lemma \ref{star}, and so $\Phi(s\diamond t)=\Phi(s)$ at $\ol{x}^F$.   So $\Phi(s\diamond t)$ and $\Phi(s)\diamond\Phi(t)$ agree on their common domain and hence are equal.
\end{proof}

From this theorem we obtain the following.

\begin{thm}  \label{corstack}
The algebras of partial functions of each of the signatures containing domain restriction and composition listed below are axiomatised as the indicated class of enriched 1-stacks.
\ben
\item $(\cdot,\circ,\cap)$ $\leftrightarrow$ 1-stacks with intersection. 
\item $(\cdot,-)$ $\leftrightarrow$ minus-semigroups.  
\item $(\cdot,-,\sqcup)$  $\leftrightarrow$ minus-semigroups with override. 
\item  $(\cdot,-,\diamond)$ $\leftrightarrow$ minus-semigroups with update.  
\item Any of the last three with intersection added $\leftrightarrow$ the relevant axioms plus those for 1-stacks with intersection. 
\item $(\cdot,\circ,\cap,\sqcup)$ $\leftrightarrow$ 1-stacks with intersection and override. 
\item $(\cdot,\circ,\cap,\diamond)$ $\leftrightarrow$ 1-stacks with intersection and update.  
\een
\end{thm}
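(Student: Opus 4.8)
The plan is to read off all seven equivalences from Theorem~\ref{main2}, together with the soundness remarks already made in Section~\ref{sec:axioms}. Each equivalence has two directions, and the soundness direction is immediate: Section~\ref{sec:axioms} records, for every one of these signatures, that $PT(X)$ equipped with the operations in question satisfies the stated axioms, hence so does every subalgebra; thus every algebra of partial functions of the signature belongs to the indicated class of enriched $1$-stacks. So the substance is the converse --- that every abstract algebra in the class is isomorphic to an algebra of partial functions of the corresponding signature.

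For the converse, let $A$ be an algebra satisfying the axioms of the relevant class. One first notes that the $(\cdot,\circ)$-reduct of $A$ is a $1$-stack: this is immediate when $\circ$ already belongs to the signature (cases~1, 6, 7), while for the minus-based signatures (cases~2--5) $\circ$ is the term $y-(y-x)$ and the $1$-stack interaction laws follow from the minus-semigroup axioms (e.g.\ $s(t-u)=st-su$ gives $a(b\circ c)=ab\circ ac$). Hence Proposition~\ref{main} applies, and for any separating family $\mc F$ of filters of $(A,\circ)$ it yields a $1$-stack embedding $\Phi\colon A\to PT(X)$ carrying any zero to the empty function. One now feeds Proposition~\ref{main} the family $\mc F$ named in Theorem~\ref{main2}, observing in each case that such a family exists: the family of all filters is separating (via Lemma~\ref{sep}), covering case~1; the prime filters of $(A,-)$ form a separating family (the corollary following Lemma~\ref{prime-}), covering cases~2--5; the prime filters of $(A,\circ,\cap,\sqcup)$ form one (the corollary following Lemma~\ref{primecap}), covering case~6; and the weakly prime filters form one (the corollary following Lemma~\ref{primediamond}), covering case~7. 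With this choice of $\mc F$, Theorem~\ref{main2} tells us that $\Phi$ respects every remaining operation of the signature; since $\Phi$ is injective by Proposition~\ref{main}, it is an embedding of $A$, as an algebra of the full signature, into $PT(X)$. Thus $A$ is isomorphic to an algebra of partial functions of that signature, and combining the two directions gives each stated axiomatisation.

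Two points deserve attention without, I expect, presenting any real obstacle, since the genuine work lies inside Theorem~\ref{main2} and the filter machinery of Section~\ref{sec:filters}. For case~5 one must note that adjoining $\cap$ does not destroy separation --- the prime filters of $(A,-)$ remain separating --- so that part~(\ref{intmain}) of Theorem~\ref{main2} applies on top of the minus, override or update representation to represent $\cap$ as well; this is precisely why the hypotheses of part~(\ref{minusmore}) of Theorem~\ref{main2} are the earlier minus cases with the $1$-stack-with-intersection laws added. One should also check that $\Phi$ represents $\circ$ consistently with the other operations, but this is automatic: $\Phi$ is a $1$-stack embedding, and in the minus cases $\circ$ is the term $y-(y-x)$, which $\Phi$ respects because it respects $-$. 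If any actual calculation is needed at all, it is in the routine verification (alluded to above) that the $(\cdot,\circ)$-reduct of a minus-semigroup is a $1$-stack.
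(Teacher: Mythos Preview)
Your proposal is correct and matches the paper's approach exactly: the paper's own proof of this theorem is simply the one-line remark ``From this theorem we obtain the following,'' referring to Theorem~\ref{main2}. Your write-up is a faithful (and more explicit) unpacking of that inference --- soundness from Section~\ref{sec:axioms}, then Proposition~\ref{main} and Theorem~\ref{main2} with the appropriate separating family $\mc F$ in each case --- and your flagging of the ``minus-semigroup $\Rightarrow$ 1-stack'' verification is, if anything, more careful than the paper itself.
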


Then from this and using the relevant one of Propositions \ref{capembed}, \ref{minusnice}, \ref{minusovernice}, \ref{minusupdatenice},  \ref{overintnicembed} and
\ref{updatenicembed}, we obtain the following further consequence. 

\begin{cor} \label{coralgebra}
The algebras of partial functions of each of the signatures containing domain restriction but not composition listed below are axiomatised as the indicated class of enriched right normal bands.
\ben
\item $(\circ,\cap)$  $\leftrightarrow$ right normal bands with intersection.
\item $(-)$ $\leftrightarrow$ minus-algebras.
\item  $(-,\sqcup)$ $\leftrightarrow$ minus-algebras with override. 
\item $(-,\diamond)$ $\leftrightarrow$ minus-algebras with update. 
\item  Any of the last three with intersection added $\leftrightarrow$ the relevant axioms plus those for right normal bands with intersection. 
\item $(\circ,\cap,\sqcup)$ $\leftrightarrow$ right normal bands with intersection and override.
\item $(\circ,\cap,\diamond)$ $\leftrightarrow$ right normal bands with intersection and update.
\een
\end{cor}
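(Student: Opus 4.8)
The soundness direction of each assertion---that every algebra of partial functions of the stated signature lies in the indicated abstract class---was already checked in Section~\ref{sec:axioms}, where $P(X,Y)$ with the relevant operations was verified against each axiom set; subalgebras then inherit all the (quasi)equational laws. So the content is the converse: every algebra in the abstract class is isomorphic to an algebra of partial functions of the corresponding composition-free signature. The plan is to reduce each case to its composition-containing analogue, which has already been settled in Theorem~\ref{corstack}, via the embedding results of Section~\ref{sec:axioms}.

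Fix a member $A$ of one of the classes in the list. Applying the relevant one of Propositions~\ref{capembed}, \ref{minusnice}, \ref{minusovernice}, \ref{minusupdatenice}, \ref{overintnicembed} and \ref{updatenicembed}, we obtain an algebra $A'$ of the corresponding composition-containing class, together with an embedding of $A$ as a subreduct of $A'$. For the minus-based signatures (items (2)--(5)) the constant $0$ is already present, so $A'$ may be taken to have the same underlying set as $A$ with composition defined by $st=0$, and the ``embedding'' is simply the identity; for the remaining signatures (items (1), (6), (7)) a new zero is adjoined and $A\hookrightarrow A'=A^{0}$ is the inclusion, which preserves every operation of the signature since adjoining $0$ does not alter any product of elements of $A$. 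In all cases, then, $A$ is isomorphic to a subalgebra of the reduct of $A'$ obtained by forgetting composition.

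By Theorem~\ref{corstack} there are a set $X$ and an isomorphism $\Theta\colon A'\to B$, where $B\le PT(X)$ is an algebra of partial functions of the composition-containing signature. Deleting $\cdot$, the reduct of $B$ to the composition-free signature is an algebra of partial functions in $P(X,X)$, hence one of the required form with $Y=X$, because none of $\circ,\cap,-,\sqcup,\diamond$ refers to $\cdot$, so these operations are computed the same way on $P(X,X)$ whether or not composition is present. Composing $A\hookrightarrow A'$ with $\Theta$ exhibits $A$ as isomorphic to a subalgebra of this reduct of $B$, and a subalgebra of an algebra of partial functions of a given signature is again such an algebra. Together with soundness this yields each equivalence; the apparent restriction to $Y=X$ in the representation is immaterial, since for the opposite inclusion soundness was established for $P(X,Y)$ with arbitrary $X,Y$.

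I expect no genuinely difficult step here: all the representation-theoretic work is contained in Theorem~\ref{main2}, and hence in Theorem~\ref{corstack}. The only points needing care are (i) that the maps supplied by the cited embedding propositions preserve every operation named in the signature, including the term-definable $\circ$ and the constant $0$---which is guaranteed because those results are phrased as (sub)reduct embeddings---and (ii) the trivial observation that forgetting $\cdot$ from an algebra of partial functions leaves an algebra of partial functions of the smaller signature.
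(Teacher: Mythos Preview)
Your proof is correct and follows exactly the paper's approach: reduce each composition-free case to its composition-containing analogue via the embedding Propositions~\ref{capembed}, \ref{minusnice}, \ref{minusovernice}, \ref{minusupdatenice}, \ref{overintnicembed} and \ref{updatenicembed}, then invoke Theorem~\ref{corstack} and forget composition. The paper's own proof is a single sentence citing precisely these ingredients; you have simply unpacked the details (the distinction between the minus-based cases where $A'=A$ with $st=0$ and the cases requiring an adjoined zero, and the harmless restriction to $Y=X$ in the resulting representation).
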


 Of these nine cases, we believe all are new except for three. One is the signature $(-,\sqcup)$: see \cite{minusover} for a proof of completness of an equivalent set of axioms for the equational theory, and \cite{cirulis} and \cite{CLS} for full completeness proofs of a different set of equivalent axioms.  Another is the signature $(-,\sqcup,\cap)$: see the discussion below on comparison algebras and semigroups, and how they relate to pointed discriminator varieties. 
The third is the signature $(-,\cap)$: as discussed earlier, this signature is equivalent to the signature $(\circ,\backslash)$, which is finitely axiomatised in \cite{borlido}.

In a minus-algebra $(A,-)$, the relation $\sim$ actually respects minus as well: under the available functional interpretation, $a\sim b$ asserts that $a,b$ have the same domains, so it follows that $a\sim b$ and $c\sim d$ imply that $a-c\sim b-d$.  Given the functional interpretation, it is easy to check that $(A,-)/{\sim}$ is the dual of an implication algebra in the sense of \cite{implic}, hence is an implicative BCK-algebra by \cite{meng}.

Note that in the minus-algebra with override $(A,\circ,-,\sqcup)$, ${A/{\sim}}$ is a distributive lattice with operations induced by $\circ,\sqcup$ on $A$ (as follows easily from the functional interpretation), and then $F$ is a prime filter in a minus-algebra with override precisely when $F/{\sim}$ is a prime filter of this distributive lattice in the usual sense for distributive lattices.

Finally, the next result appears in Section $3$ of \cite{LeechNSL}. 

\begin{pro}  \label{Leechax}
The algebras of partial functions of signature $(\circ,\sqcup)$ are axiomatised as the class of right handed strongly distributive skew lattices.
\end{pro}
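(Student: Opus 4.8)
The plan is to follow the same two-part template used throughout the paper: first verify soundness, i.e.\ that every functional $(\circ,\sqcup)$-algebra satisfies axioms (1)--(4); and then establish completeness, i.e.\ that conversely every right handed strongly distributive skew lattice is isomorphic to a functional one.

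For soundness I would argue directly on $P(X,Y)$. That $(P(X,Y),\circ)$ is a right normal band has already been recorded, and $(P(X,Y),\sqcup)$ is a band because override is idempotent and associative (associativity being routine from the defining case split). The two absorption laws $x\circ(x\sqcup y)=x=(y\sqcup x)\circ x$ and $x\sqcup(x\circ y)=x=(y\circ x)\sqcup x$, and the two distributive laws, then follow by a short case analysis on the domains of $x$, $y$, $u$: for each point of $X$ one checks which of the relevant functions is defined there and reads off that the two sides agree. Hence any set of partial functions closed under $\circ$ and $\sqcup$ is a right handed strongly distributive skew lattice, and so is any of its subalgebras.

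For completeness the point to stress is that the prime-filter construction of Section~\ref{sec:rep} (Proposition~\ref{main} and Theorem~\ref{main2}) is not available here: without $-$ in the signature, and without $\cap$ to pin down a separating family via Lemma~\ref{star}, there is no evident way to force $\sqcup$ to be represented as override through the maps $\phi^F_a$. Instead I would invoke Leech's representation of skew lattices. In \cite{LeechNSL}, Section~$3$ --- concretely Theorem~$3.2$ together with the remarks at $3.7$ explaining how to read that theorem as a functional representation --- it is shown that every right handed strongly distributive skew lattice embeds in an algebra of the form $(P(X,Y),\circ,\sqcup)$ with $\circ$ domain restriction and $\sqcup$ override. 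Combined with the soundness direction, this yields the claimed axiomatisation. The only residual work is a short dictionary: one checks that the axiom list (1)--(4) above is equivalent to Leech's defining identities for the right handed strongly distributive class --- in particular that Leech's strong distributivity is equivalent, in the presence of the band and absorption laws, to the two one-sided distributive laws stated here, and that the handedness and absorption conventions line up.

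I expect this reconciliation of axiom sets to be the main (if modest) obstacle, since Leech works with a two-sided setup that is then specialised, and one must be careful that nothing beyond (1)--(4) is being used. An alternative, fully self-contained route would be to redo Leech's argument: pass to the maximal lattice image $A/{\sim}$, which here is a \emph{distributive} lattice (the operation $\sqcup$ inducing a join on it, just as noted for minus-algebras with override), take its prime filters, and reconstruct $A$ as partial functions via the coset decomposition of the skew lattice into primitive pieces. Since this merely recovers the content of \cite{LeechNSL}, citing that work is the natural choice.
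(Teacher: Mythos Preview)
Your proposal is correct and matches the paper's approach exactly: the paper gives no proof of this proposition at all, simply recording that ``the next result appears in Section~3 of \cite{LeechNSL}'' and citing Theorem~3.2 and the remarks at~3.7 there (as you do). You are also right that the paper's own representation machinery does not handle this signature --- the authors say so explicitly in Section~\ref{sec:open} --- so your observation that one must fall back on Leech rather than on Proposition~\ref{main} is precisely the point.
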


\section{Tidying up the axioms}  \label{sec:axioms2}

We now spend time re-considering the sets of axioms given in Section \ref{sec:axioms}, which were purpose-built to facilitate the subsequent completeness proofs in terms of partial functions.  For some of the quasi-equational axiomatisations it is possible to find an equivalent equational one or else show that this is impossible.  We also relate some of our axioms to other known axiomatisations.  (We defer consideration of the most parsimonious possible axiomatisations to the supplementary Section~\ref{sec:supp}.) 

First note that the operation of ``intersection" is abstractly defined in a skew Boolean intersection algebra (SBIA) in the sense of \cite{bigleech} in terms of the property that any two elements in a right normal band have a greatest lower bound under the natural order.  It turns out that this notion is relevant also in the case of right normal bands with intersection.

\begin{pro}  \label{nicemeet}
If $(A,\circ)$ is a right normal band in which every two elements $s,t\in A$ have a greatest lower bound $s\wedge t$ under the natural order, then $(A,\circ,\wedge)$ is a right normal band with intersection.  Moreover ever right normal band with intersection arises in this way.  

Hence the class of right normal bands with intersection $(A,\circ,\cap)$ may be axiomatised as follows.
\ben
\item $(A,\circ)$ is a right normal band;
\item  the following law holds: $$x=x\circ y \And x=x\circ z \Leftrightarrow x=x\circ(y\cap z).$$
\een
\end{pro}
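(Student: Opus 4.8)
The plan is to prove the two halves of the statement separately, then deduce the final axiomatisation as an easy consequence. For the first half, suppose $(A,\circ)$ is a right normal band in which any two elements $s,t$ have a greatest lower bound $s\wedge t$ under the natural order $\leq$. I must check that $(A,\circ,\wedge)$ satisfies the right normal band with intersection axioms: that $(A,\wedge)$ is a semilattice, and that $(s\wedge t)\circ s=s\wedge t$ and $s\circ(t\wedge u)=(s\circ t)\wedge u$. Idempotency, commutativity and associativity of $\wedge$ are automatic from its being a meet in a poset. The identity $(s\wedge t)\circ s = s\wedge t$ is just a restatement of $s\wedge t\leq s$ (recall $a\leq b$ iff $a=a\circ b$). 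The one genuine point is the ``mixed'' law $s\circ(t\wedge u) = (s\circ t)\wedge u$. Here I would first record that $s\circ v\leq v$ is false in general, but $s\circ v \lesssim s$ and $s\circ v \leq$ -- more usefully, I note $s\circ v\leq s\circ w$ whenever $v\leq w$ (monotonicity of $v\mapsto s\circ v$ in the natural order, which follows from the band laws), and $s\circ v \leq v$ is replaced by the correct fact that the natural order comparisons we need are $s\circ(t\wedge u)\leq s\circ t$ and $s\circ(t\wedge u)\leq u$ (the latter because $s\circ(t\wedge u) = s\circ(u\circ(\text{something}))$... ) — in any case both inequalities should fall out of the right normal band laws together with $t\wedge u\leq t$, $t\wedge u\leq u$. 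This gives $s\circ(t\wedge u)\leq (s\circ t)\wedge u$. For the reverse, I take any $r\leq s\circ t$ with $r\leq u$ and must show $r\leq s\circ(t\wedge u)$; the key computation is that $r\leq s\circ t$ forces $r = r\circ s\circ t = r\circ t$ (using $r\leq s\circ t \lesssim s$ to absorb the $s$) and then $r\leq t$, $r\leq u$ give $r\leq t\wedge u$, whence $r = r\circ(t\wedge u)$; combining with $r = r\circ s\circ t$ and monotonicity yields $r\leq s\circ(t\wedge u)$.

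For the second half, let $(A,\circ,\cap)$ be an arbitrary right normal band with intersection. I must show $a\cap b$ is the greatest lower bound of $a$ and $b$ under $\leq$. That $a\cap b\leq a$ and $a\cap b\leq b$ is Proposition \ref{leqcap} applied to the semilattice $(A,\cap)$: indeed $(a\cap b)\cap a = a\cap b$, so $a\cap b\leq a$, and symmetrically $a\cap b\leq b$. For maximality, suppose $c\leq a$ and $c\leq b$, i.e. $c = c\cap a = c\cap b$; then $c = (c\cap a)\cap b = c\cap(a\cap b)$, so $c\leq a\cap b$ by Proposition \ref{leqcap} again. Hence $a\cap b = a\wedge b$, and the abstractly-defined meet coincides with $\cap$; so $(A,\circ,\cap)$ arises as in the first half.

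Finally, for the stated equational axiomatisation, I claim axioms (1)--(2) of the Proposition are equivalent to being a right normal band with intersection. Given a right normal band with intersection, (1) holds by definition and (2) holds because, by the first two paragraphs, $\cap$ is the natural-order meet: $x = x\circ y$ and $x = x\circ z$ say $x\leq y$ and $x\leq z$, which by the meet property is equivalent to $x\leq y\cap z$, i.e. $x = x\circ(y\cap z)$. Conversely, suppose $(A,\circ)$ is a right normal band satisfying (2). Setting $x = y\cap z$ (which is a legitimate element since $\cap$ is a given operation) I must first confirm $y\cap z\leq y$ and $y\cap z\leq z$: take $x := (y\cap z)\circ (y\cap z) = y\cap z$ in one direction... more carefully, instantiate (2) with $x = y\cap z$; the right-hand side $x = x\circ(y\cap z)$ is $y\cap z = (y\cap z)\circ(y\cap z)$, true by idempotency, so the left-hand side gives $(y\cap z)\circ y = y\cap z$ and $(y\cap z)\circ z = y\cap z$, i.e. $y\cap z$ is a lower bound of $y,z$. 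Then (2) read left-to-right says any lower bound of $y$ and $z$ is below $y\cap z$, so $y\cap z$ is the greatest lower bound; by the first half, $(A,\circ,\cap)$ is a right normal band with intersection. I expect the main obstacle to be the ``mixed'' law $s\circ(t\wedge u) = (s\circ t)\wedge u$ in paragraph one — establishing both inequalities purely from the band laws requires care with which element absorbs which, since the natural order interacts with $\circ$ only on one side; everything else is bookkeeping via Proposition \ref{leqcap}.
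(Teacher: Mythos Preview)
Your proposal is correct and follows essentially the same route as the paper: verify the semilattice and absorption axioms trivially, prove the mixed law $s\circ(t\wedge u)=(s\circ t)\wedge u$ by establishing both inequalities via monotonicity and an absorption argument, handle the converse via Proposition~\ref{leqcap}, and deduce the axiomatisation by instantiating the biconditional. One correction that will tidy your first paragraph considerably: the inequality $s\circ v\leq v$ \emph{does} hold in any right normal band (since $(s\circ v)\circ v=s\circ v$), so $s\circ(t\wedge u)\leq t\wedge u\leq u$ is immediate, and your reverse-inequality step becomes simply: from $r\leq s\circ t$ one gets $s\circ r=r$ (your ``$r\lesssim s$'' observation), so $r\leq t\wedge u$ gives $r=s\circ r\leq s\circ(t\wedge u)$ by monotonicity --- which is exactly the paper's argument specialised to $r=(s\circ t)\wedge u$.
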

\begin{proof}  Recall that $\leq$ is the natural order on $A$, given by $x\leq y$ if and only if $x=x\circ y$.  Note that for all $x,y\in A$, $x\circ y\leq y$, and if $y\leq z$ then $x\circ y\leq x\circ z$.  (Direct proofs are easy but also follow easily from the available functional interpetation.)

Assume  $(A,\circ)$ is a right normal band in which every two elements $s,t\in A$ have a greatest lower bound $s\wedge t$ under the natural order.  Then of course $(A,\wedge)$ is a semilattice, and by definition of the natural order it satisfies the law $(x\wedge y)\circ x=x\wedge y$.  It remains to check the law $x\circ(y\wedge z)=(x\circ y) \wedge z$.  

But for all $x,y,z\in A$, $x\circ (y\wedge z)\circ (x\circ y)=x\circ (y\wedge z)\circ y=x\circ (y\wedge z)$, so $x\circ (y\wedge z)\leq x\circ y$.  Moreover $x\circ (y\wedge z)\leq y\wedge z\leq z$.  So $x\circ (y\wedge z)\leq (x\circ y)\wedge z$.  

For the opposite inequality, since $x\circ y\leq y$, we have $(x\circ y)\wedge z\leq y\wedge z$, and so $x\circ ((x\circ y)\wedge z)\leq x\circ(y\wedge z)$.  Now suppose that $u\leq x\circ y$; then $x\circ u=x\circ u\circ x\circ y=u\circ x\circ x\circ y=u\circ x\circ y=u$.  Letting $u=(x\circ y)\wedge z$ gives that 
$$(x\circ y)\wedge z=x\circ((x\circ y)\wedge z)\leq x\circ (y\wedge z)\leq (x\circ y)\wedge z,$$
and the law follows.

Conversely, if $(A,\circ,\cap)$ is a right normal band with intersection, then $(x\cap y)\circ x=x$ so $x\cap y\leq x$, and by symmmetry (since $x\cap y=y\cap x$), $x\cap y\leq y$.  If $u\leq x,y$ then $u\circ (x\cap y)=(u\circ x)\cap(u\circ y)=x\cap y$, so $u\leq x\cap y$.  So $\cap$ is meet in $(A,\leq)$.
\end{proof}

Moving on to minus, recall that one of the laws of minus-algebras is an equational implication.  In fact it cannot be replaced by any purely equational laws.

\begin{pro} \label{properqe}
The class of minus-algebras is properly quasi-equational.  
\end{pro}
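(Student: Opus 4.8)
The plan is to verify that the class of minus-algebras, which is certainly a quasivariety (it is defined by the finite set of quasi-equations displayed above), is not closed under homomorphic images, and hence is not a variety. Every minus-algebra axiom is an equation except the last one, the cancellation law $s-x=t-x \text{ and } x\circ s=x\circ t \Rightarrow s=t$. Consequently any quotient of a minus-algebra automatically satisfies all the remaining axioms, so it suffices to produce a single minus-algebra $A$ together with a congruence $\theta$ on it for which $A/\theta$ fails the cancellation law.

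For $A$ I would take $P(\{1,2\},\{0,1\})$ under domain restriction and minus, which is a minus-algebra by the soundness observations above. Put $p=\{(1,0)\}$, $q=\{(2,0)\}$, $q'=\{(2,1)\}$, $u=\{(1,0),(2,0)\}$ and $v=\{(1,0),(2,1)\}$, and let $\theta$ be the equivalence relation on $A$ whose only non-trivial class is $\{q,q'\}$. The crucial step is to check that $\theta$ is a congruence with respect to both $\circ$ and $-$. Since $q$ and $q'$ have the common domain $\{2\}$, for every $c\in A$ one has $q\circ c=q'\circ c$ and $c-q=c-q'$ outright, while $c\circ q$ and $c\circ q'$ are both the empty function if $2\notin\dom c$ and equal $q$ and $q'$ respectively otherwise, and dually $q-c$ and $q'-c$ are both empty if $2\in\dom c$ and equal $q$ and $q'$ otherwise; the finitely many further combinations in which both arguments range over $\{q,q'\}$ are checked directly, and the constant $0$ is immediate.

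It then remains to read off the failure of cancellation in $B=A/\theta$. Writing $[c]$ for the $\theta$-class of $c$, one computes $[p]\circ[u]=[p]=[p]\circ[v]$ and $[u]-[p]=[q]=[q']=[v]-[p]$, whereas $[u]\neq[v]$ because $\theta$ identifies only $q$ with $q'$. Taking $x:=[p]$, $s:=[u]$ and $t:=[v]$ thus witnesses a failure of $s-x=t-x \text{ and } x\circ s=x\circ t \Rightarrow s=t$, so $B$ is not a minus-algebra even though it is a homomorphic image of one. Hence minus-algebras do not form a variety, and being a quasivariety the class is properly quasi-equational.

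The only genuine work is the bookkeeping needed to confirm that $\theta$ really is a congruence for both $\circ$ and $-$ — one has to be certain that applying these operations to the two collapsed elements, on either side and against any element of $A$, never produces a pair outside $\theta$ — together with the small computations exhibiting the counterexample in $B$. Both are entirely routine case analyses, so there is no substantive obstacle.
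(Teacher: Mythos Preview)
Your proof is correct and follows the same overall strategy as the paper—exhibit a functional minus-algebra and a congruence on it whose quotient violates the cancellation quasi-equation—but your concrete example is different and in fact simpler. The paper works inside $PT(\{1,2,3\})$ with a seven-element subalgebra (borrowed from an earlier paper on modal restriction semigroups) and shows that the quotient fails the derived implication of Lemma~\ref{altqi} rather than the original cancellation law directly. Your example in $P(\{1,2\},\{0,1\})$ is smaller in spirit: because the two collapsed elements $q,q'$ share the same domain, the congruence verification for $-$ (and hence for the derived operation $\circ$) is almost automatic, and the failure of cancellation is witnessed immediately by $p,u,v$. So your route buys a shorter congruence check and a direct hit on the axiomatic quasi-equation; the paper's route has the advantage of reusing a known example and showing failure of a law (Lemma~\ref{altqi}) that is of independent interest later in the article.
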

\begin{proof}
We give an example of a minus-algebra that has a quotient that is not a minus-algebra: specifically we show that necessary law
\begin{equation}
(x-y)\circ s=(x-y)\circ t \And y\circ s=y\circ t\Rightarrow x\circ s=x\circ t\label{eq:leftunionminus}
\end{equation}
 fails.  The example is essentially that used in the proof of \cite[Proposition 11]{modrest} to show that the class of modal restriction semigroups is a proper quasivariety.  We give the details as they are brief, and the law is different to the direct translation of the property considered in \cite{modrest}.
We consider the  minus-algebra $A$ of functions on $\{1,2,3\}$ consisting of the empty function along with the following six functions
\begin{align*}
a:=
\left(\begin{matrix}
1&2&3\\
1&2&2
\end{matrix}\right)&&
b:=
\left(\begin{matrix}
1&2&3\\
1&3&3
\end{matrix}\right)&&
c:= \left(\begin{matrix}
2&3\\
2&3
\end{matrix}\right)\\
d:=
\left(\begin{matrix}
1\\
1
\end{matrix}\right)&&
e:=
\left(\begin{matrix}
2&3\\
3&3
\end{matrix}\right)&&
f:= \left(\begin{matrix}
2&3\\
2&2
\end{matrix}\right)
\end{align*}
It is routine to verify that this is closed under minus. 
Observe that 
\begin{align}
(c-d)\circ a &=c\circ a=a-(a-c)=a-d=e\text{ and}\label{eqn:c-da}\\
(c-d)\circ b &=c\circ b=b-(b-c)=b-d=f.\label{eqn:c-db}
\end{align}
It is also easily verified that 
\begin{align}
d\circ a=d\circ b=d\label{eqn:da=db}
\end{align}
Moreover, the equivalence relation $\theta$ that identifies $e$ and $f$ is a congruence.  Indeed, for every $x$ we have $e-x$ is equivalent modulo $\theta$ to $f-x$, and similarly for $x-e$ and $x-f$.  As $\{e,f\}$ is the unique nontrivial block of $\theta$, this verifies the stability of $\theta$ under $-$.  In the quotient $A/\theta$ we have $((c-d)\circ a)/\theta=e/\theta=f/\theta((c-d)\circ b)/\theta$ by \eqref{eqn:c-da} and \eqref{eqn:c-db}.  By \eqref{eqn:da=db} we have $d\circ a=d\circ b$.  But $(c\circ a,c\circ b)=(a,b)\notin\theta$, so that the law \eqref{eq:leftunionminus} fails when $(x,y,s,t)=(c,d,a,b)$.
\end{proof} 

Proposition \ref{properqe} extends to cover a corresponding result for the class of minus-semigroups, as defining $st=0$ (the empty function) for all $s,t$ in the above example gives a minus-semigroup with $\theta$ still a congruence on it.

As already noted, the class of minus-algebras with intersection is a finitely based variety; this follows from the main result of \cite{borlido}, which is that the class of algebras of functions of signature $(\circ,\backslash)$ is a finitely axiomatised variety, given by the following laws:
\bi
\item $x\backslash (y\backslash x)=x$;
\item $x\cap y=y\cap x$;
\item $x\backslash y)\backslash z=(x\backslash z)\backslash y$;
\item $(x\circ z)\cap (y\circ z)=(x\backslash y)\backslash z$;
\item $(x\cap y)\circ x=x\cap y$.
\ei
(Here, as usual $s\cap t$ is defined to be $s\backslash(s\backslash t)$.)  This implies a finite equational axiomatisation for the equivalent signature $(-,\cap)$.  However, staying with the signature of $(\circ,\backslash)$, we are easily able to extend the above result appearing in \cite{borlido} to the signature in which composition is added.

\begin{pro}  \label{minusintvar}
The class of algebras of functions of signature $(\cdot,\circ,\backslash)$ is a finitely axiomatised variety, given by the following laws:
\bi
\item the above laws for algebras of signature $(\circ,\backslash)$ as in \cite{borlido};
\item the laws for 1-stacks with intersection;
\item $s(t\backslash u)=st\backslash su$.
\ei
\end{pro}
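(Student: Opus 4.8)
The plan is to reduce to the already-settled case of minus-semigroups with intersection (Theorem~\ref{corstack}(5)), using the term-equivalence on partial functions of the signatures $(\cdot,\circ,\backslash)$ and $(\cdot,-,\cap)$: one passes from the first to the second by $f-g:=f\backslash(g\circ f)$ and $f\cap g:=f\backslash(f\backslash g)$, and back by $f\circ g=g-(g-f)$ and $f\backslash g=f-(f\cap g)$. Soundness is routine: the laws of \cite{borlido} hold on functions of signature $(\circ,\backslash)$ by the quoted result (with $s\cap t$ read as $s\backslash(s\backslash t)$, genuine intersection), the 1-stack-with-intersection laws are the familiar functional identities, and $s(t\backslash u)=st\backslash su$ follows by a one-line case analysis, both sides being subsets of $st$.

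For completeness, let $A$ satisfy all the listed laws. The key observation is that the $(\circ,\backslash)$-reduct of $A$ satisfies the five laws of \cite{borlido}, and since those laws \emph{define} the class of all algebras of functions of signature $(\circ,\backslash)$, every sentence over $\{\circ,\backslash\}$ --- equational or quasiequational --- that holds on all partial functions holds in $A$. Equip $A$ with $s-t:=s\backslash(t\circ s)$ and $s\cap t:=s\backslash(s\backslash t)$ (the latter consistent with the standing abbreviation), the distinguished zero being $s\backslash s$. Unfolding $-$, the full axiom set for minus-algebras --- including its quasiequational law (that $s-x=t-x$ and $x\circ s=x\circ t$ imply $s=t$) --- is a list of $\{\circ,\backslash\}$-sentences valid on partial functions, hence true in $A$, so $(A,\circ,-)$ is a minus-algebra. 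The 1-stack-with-intersection laws --- which include the right-normal-band-with-intersection laws and associativity of $\cdot$, with $\cap$ the operation just defined --- are assumed outright, and $s(t-u)=st-su$ follows from $s(t-u)=s(t\backslash(u\circ t))=st\backslash s(u\circ t)=st\backslash(su\circ st)=st-su$ via $s(t\backslash v)=st\backslash sv$ and the 1-stack identity $s(b\circ c)=sb\circ sc$. Hence $(A,\cdot,\circ,-,\cap)$ is a minus-semigroup with intersection.

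By Theorem~\ref{corstack}(5) (applied to the signature $(\cdot,-)$), $(A,\cdot,-,\cap)$ is isomorphic to an algebra of functions in $PT(X)$ under composition, minus and intersection. The identities $f\circ g=g-(g-f)$ and $f\backslash g=f-(f\cap g)$ hold on all partial functions and, once $-$ and $\cap$ are replaced by their $\{\circ,\backslash\}$-definitions, hold in $A$ by the key observation; so that isomorphism respects $\circ$ and $\backslash$ as well, and $A$ is (isomorphic to) an algebra of functions of signature $(\cdot,\circ,\backslash)$. Together with soundness this shows the listed finite set of equations axiomatises exactly that class, which is thus a finitely based variety. The one delicate point is the device just used: the quasiequational law of minus-algebras, and the identities needed to carry the representation back to the $(\cdot,\circ,\backslash)$ signature, are not derived syntactically from \cite{borlido}'s five laws but are free consequences of the fact that those laws cut out precisely the functional $(\circ,\backslash)$-algebras; the remaining checks are routine equational bookkeeping.
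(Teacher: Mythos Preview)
Your proof is correct and follows essentially the same route as the paper's: define $-$ and $\cap$ from $\circ,\backslash$, invoke completeness of the \cite{borlido} axioms to get all functional $\{\circ,\backslash\}$-sentences (in particular the minus-algebra laws, including the quasiequation), verify the minus-semigroup law $s(t-u)=st-su$ by exactly your computation, apply Theorem~\ref{corstack}(5), and transfer the representation back to $\circ,\backslash$ via the identities $f\circ g=g-(g-f)$ and $f\backslash g=f-(f\cap g)$, again justified by \cite{borlido}-completeness. Your explicit flagging of the ``delicate point'' --- that the needed consequences are semantic rather than syntactic derivations from the five laws --- is exactly the mechanism the paper relies on as well.
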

\begin{proof}
The above laws are all sound for partial functions (for the final one, see \cite{scheindiff}).  Conversely, given an algebra $(S,\cdot,\circ,\backslash)$ satisfying these laws, define $s-t$ and $s\cap t$ in terms of $\circ,\backslash$ as in Section \ref{intro}.  Then all the laws for minus-algebras with intersection must hold, since we are assuming the complete laws involving $\circ,\backslash$ as in \cite{borlido}.  We verify the minus-semigroup law: for all $s,t,u$,
\bea
s(t-u)&=&s(t\backslash (u\circ t))\\
&=&st\backslash s(u\circ t)\\
&=&st\backslash (su\circ st)\\
&=&st-su,
\eea
as required.  So by (5) in Theorem \ref{corstack} applied to (2) there, $(S,\cdot,-,\cap)$ can be represented as an algebra of partial functions.  It only remains to check that $\backslash,\circ$ are correctly represented.  They will be providing $s\backslash t=s-(s\cap t)$ and $s\circ t-t-(t-s)$ (since as in Section \ref{intro}, this is how they are definable in terms of $-,\cap$ as operations on partial functions), with $s-t,s\cap t$ as defined in Section~\ref{intro}.  But this again follows from the completeness of the laws for $\backslash,\circ$ in \cite{borlido}.
\end{proof}

We also have the following new result.

\begin{pro}
The class of minus-algebras with override is a finitely based variety, obtained by replacing the quasiequational law for minus-algebras by the equational law $x=(y\circ x)\sqcup (x-y)$; hence so is the class of minus-semigroups with override.
\end{pro}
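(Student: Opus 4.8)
The plan is to show that the proposed equational law $x=(y\circ x)\sqcup(x-y)$ is equivalent, in the presence of all the other (equational) axioms for minus-algebras with override, to the single quasiequational law $s-x=t-x\And x\circ s=x\circ t\Rightarrow s=t$. Since everything else in the axiom set for minus-algebras with override is already equational, this establishes that the class is a finitely based variety; and the statement for minus-semigroups with override then follows immediately, since the extra composition axioms are equational and the passage between minus-algebras with override and minus-semigroups with override via Proposition~\ref{minusovernice} does not affect this.

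First I would check soundness: the law $x=(y\circ x)\sqcup(x-y)$ holds for partial functions, since $y\circ x$ is the restriction of $x$ to $\dom(y)$, $x-y$ is the restriction of $x$ to the complement of $\dom(y)$, and override glues these two disjoint pieces back together to recover $x$. So adding this law keeps the class sound, and every functional model satisfies it. Next, the easy direction of the equivalence: assuming the quasiequational law (and the rest), derive $x=(y\circ x)\sqcup(x-y)$. For this I would set $s=x$ and $t=(y\circ x)\sqcup(x-y)$ and verify the two hypotheses $x\circ s=x\circ t$ and $s-y'=t-y'$ for a suitable $y'$; actually one wants to verify $x\circ s = x\circ t$ and $s - y = t - y$ using the override axioms $(u\sqcup v)-u=v-u$ and $u\circ(u\sqcup v)=u$ together with the minus-algebra identities (\,$(x-y)\circ y=0$, $(x-y)\circ x=x-y$, $x\circ y=y-(y-x)$, the distributive law $(x-y)\circ z=(x\circ z)-y$, etc.). Concretely, $t-y = ((y\circ x)\sqcup(x-y))-y$; one computes $(y\circ x)-y = (x\circ y)- y$; but $(x-y)\circ y=0$ suggests $(y\circ x) - y$ should be $0$ hmm — I would track this carefully, using $(u\sqcup v)-u = v-u$ to peel off the first summand. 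This is a routine but slightly fiddly identity chase in the minus-algebra signature.

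The substantive direction is the converse: assuming all the equational axioms together with $x=(y\circ x)\sqcup(x-y)$, derive the quasiequational implication $s-x=t-x\And x\circ s=x\circ t\Rightarrow s=t$. Here I would instantiate the new law twice: $s=(x\circ s)\sqcup(s-x)$ and $t=(x\circ t)\sqcup(t-x)$. Under the hypotheses $x\circ s=x\circ t$ and $s-x=t-x$, the right-hand sides are literally equal (same override of the same two arguments), so $s=t$ falls out immediately. That is the key step, and it is almost trivial once the new law is written in the symmetric form $z=(w\circ z)\sqcup(z-w)$; the only care needed is that the free variables in the stated law are $x$ and $y$ and one must substitute $y\mapsto x$, $x\mapsto s$ (and then $x\mapsto t$), which is fine.

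The main obstacle, such as it is, is purely bookkeeping: verifying the easy direction requires a short sequence of equational manipulations in the minus-algebra-with-override signature to confirm that $(y\circ x)\sqcup(x-y)$ really does satisfy $x\circ(\cdot)=x\circ x = x$ and $(\cdot)-y$-type reductions, so that the quasiequation can be applied. I expect this to be a handful of lines using the listed minus-algebra laws plus the two override laws, with no conceptual difficulty. Once the two-way equivalence of the quasiequation with the proposed equation (modulo the remaining equational axioms) is in hand, the proposition — including its ``hence'' clause for minus-semigroups with override — follows at once, because replacing one axiom by a logically equivalent one does not change the axiomatised class, and for minus-semigroups with override the composition axioms are already equational.
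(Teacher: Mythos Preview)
Your proposal is correct and follows the paper's approach; the substantive direction (deriving the quasiequation from the new equation by instantiating $s=(x\circ s)\sqcup(s-x)$ and $t=(x\circ t)\sqcup(t-x)$) is exactly what the paper does. The one place you make extra work for yourself is the ``easy direction'': you do not need a direct equational derivation of $x=(y\circ x)\sqcup(x-y)$ from the old axioms, because by Corollary~\ref{coralgebra}(3) the old axioms are already complete for functional models, and you have just observed that the new law is sound there --- so it holds in every minus-algebra with override automatically. The paper simply says ``observe that the new equational law is sound'' and moves on; your fiddly identity chase (where you write ``hmm'') is unnecessary.
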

\begin{proof}
Observe that the new equational law is sound.  Conversely, if $s-x=t-x$ and $x\circ s=x\circ t$ then 
$$s=(x\circ s)\sqcup(s-x)=(x\circ t)\sqcup(t-x)=t,$$
as required.
\end{proof}

Right handed strongly distributive skew lattices as in Subsection \ref{overonly} provide an alternative route to the axiomatisation of minus-algebras with override, the class considered in \cite{minusover}.  This is because minus is abstractly definable from override and domain restriction in the sense used in \cite[\S3.1]{jacstoverup}.

\begin{pro}  \label{overplusminus}
The class of algebras of functions of signature $(\circ,\sqcup,-)$ is axiomatised as the class of right handed strongly distributive skew lattices equipped with an operation  $-$ satisfying the following:
\ben
\item $0\circ x=0$;
\item $(x-y)\circ y=0$;
\item $(y\circ x)\sqcup(x-y)=x$.
\een
\end{pro}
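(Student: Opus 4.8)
The plan is to prove the two directions separately. For \textbf{soundness}, the right handed strongly distributive skew lattice laws hold on the $(\circ,\sqcup)$-reduct of any algebra of functions by Proposition~\ref{Leechax}, so it suffices to verify laws (1)--(3) directly on $P(X,Y)$, reading $0$ as the empty function (equivalently, as $x-x$): law (1) is immediate; $(f-g)\circ g$ has domain $(\dom(f)\backslash\dom(g))\cap\dom(g)=\varnothing$, giving law (2); and $g\circ f$ is $f$ restricted to $\dom(g)$ while $f-g$ is $f$ restricted to $\dom(f)\backslash\dom(g)$, so these two partial functions have disjoint domains and their override is their union, namely $f$, giving law (3).

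For \textbf{completeness}, let $(A,\circ,\sqcup,-)$ satisfy the stated laws and read $0$ throughout as the element $x-x$. First, law (1) says $0\circ x=0$, i.e.\ $0\leq x$, so $0$ is the least element of the natural order on $(A,\circ)$. Next, apply Proposition~\ref{Leechax} to the reduct $(A,\circ,\sqcup)$ to obtain an embedding $\psi$ of it into $(P(X,Y),\circ,\sqcup)$ for some $X,Y$. Since $\psi$ respects $\circ$, the partial function $\psi(0)$ is contained in $\psi(a)$ for every $a\in A$; although $\psi(0)$ need not be the empty function, composing $\psi$ with the map sending each $f$ in its image to $f\backslash\psi(0)$ (the restriction of $f$ to the complement of $\dom(\psi(0))$) yields, by a short case analysis, another embedding of $(A,\circ,\sqcup)$ that sends $0$ to $\varnothing$. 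So we may assume $\psi(0)=\varnothing$.

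Now I would show $\psi$ automatically respects $-$ as well --- this is the heart of the argument and amounts to the abstract definability of $-$ from $\circ$ and $\sqcup$ in the sense of \cite[\S3.1]{jacstoverup}. Fix $s,t\in A$ and write $f=\psi(s)$, $g=\psi(t)$, $h=\psi(s-t)$. Transporting law (2) along $\psi$ gives $h\circ g=\psi(0)=\varnothing$, hence $\dom(h)\cap\dom(g)=\varnothing$; transporting law (3) gives $(g\circ f)\sqcup h=f$. As $g\circ f$ is $f$ restricted to $\dom(g)$, its domain is disjoint from $\dom(h)$, so $(g\circ f)\sqcup h$ is simply the union of $g\circ f$ and $h$; equating this with $f$ forces $h$ to be the restriction of $f$ to $\dom(f)\backslash\dom(g)$, that is, $h=f-g$. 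Thus $\psi(s-t)=\psi(s)-\psi(t)$ for all $s,t$, so $\psi(A)$ is closed under the genuine minus of partial functions and $\psi$ is an embedding of $(A,\circ,\sqcup,-)$ into $(P(X,Y),\circ,\sqcup,-)$, completing the proof.

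The step I expect to be the main obstacle is the handling of $0$: since the streamlined laws (1)--(3) do not postulate $x-x=0$ outright, one must confirm that $x-x$ is a genuine constant functioning as a bottom element and a two-sided $\circ$-zero, and then check carefully that the external representation of the $(\circ,\sqcup)$-reduct from Proposition~\ref{Leechax} can indeed be modified so as to send it to the empty function. Once that is in place, the rest is the short ``disjoint union'' computation above.
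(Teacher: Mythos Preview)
Your proposal is correct and follows essentially the same route as the paper: represent the $(\circ,\sqcup)$-reduct functionally via Proposition~\ref{Leechax}, normalise the representation so that $0$ is sent to the empty function, and then use laws~(2) and~(3) to force the abstract $-$ to coincide with functional minus (the abstract definability argument). Your extra caution about whether $x-x$ is a well-defined constant is a detail the paper glosses over by treating $0$ as a nullary constant, but otherwise the two arguments are identical.
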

\begin{proof} The three laws are easily seen to be sound, so it suffices to show that a right handed strongly distributive skew lattice of functions $(A,\circ,\sqcup)$ equipped with an operation $-$ satisfying the additional three laws for minus just given must have $-$ equal to minus.  Represent $(A,\circ,\sqcup)$ as an algebra of functions.  The law $0\circ x=0$ forces $0$ to be a subset of every function.  Modify the representation by removing this common part of every function: the result is easily seen to still be a faithful representation of $A$, but now $0$ is represented as the empty function.  The second law then guarantees that under this representation, the domain of $x-y$ lies outside that of $y$, and the third forces it to be minus applied to $x,y$.
\end{proof}

This provides a nice counterpoint to the previously noted fact that override is abstractly definable from minus.  In this case, the abstract definability of minus is from override, domain restriction and~$0$.

The axiomatisation of the signature $(\circ,\sqcup,-)$ obtained by Leech in \cite{LeechSBA} is very similar.   A  {\em right-handed  skew Boolean algebra} is an algebra $(A,\circ,\sqcup,-)$ satisfying the following laws:   
\bi
\item $(A,\circ,\sqcup)$ is a  right-handed strongly distributive skew lattice;
\item $x\circ 0=0\circ x=0$, $x\sqcup 0=0\sqcup x=x$;
\item $(x-y)\sqcup (y\circ x)=(y\circ x)\sqcup (x-y)=x$;
\item $(x-y)\circ y\circ x=y\circ x\circ (x-y)=0$.
\ei
In \cite{LeechSBA},  the author showed that the algebras of partial functions of signature $(\circ,\sqcup,-)$ are axiomatised as the class of right-handed skew Boolean algebras.  It follows that the axioms given in Proposition~\ref{overplusminus} for minus-algebras with override must be equivalent to the rather more complex axioms for right-handed skew Boolean algebras (something we have verified directly using {\em Prover9}).  The result in~\cite{LeechSBA} was published prior to the axiomatisation of the functional signature $(\circ,\sqcup)$ as right normal distributive symmetric skew lattices given in~\cite{LeechNSL} by the same author; we have just shown that one can easily obtain the richer axiomatisation involving minus from the one without it.

It follows from the third part of Corollary \ref{coralgebra} that the axioms just given are equivalent to those given earlier for minus-algebras with override, and so we may use the former in place of the latter to obtain an alternative axiomatisation for 1-stacks with minus and override.

Because adding intersection to the signature of minus-algebras with override requires only the addition of two equational laws as in Subsection \ref{cap}, we obtain the following.

\begin{cor}
The class of minus-algebras with intersection and override is a variety; hence so is the class of minus-semigroups with intersection and override.
\end{cor}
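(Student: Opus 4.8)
The statement to prove is that the class of minus-algebras with intersection and override is a variety, and hence so is the class of minus-semigroups with intersection and override. The key observation is that we have just established (in Proposition~\ref{overplusminus}, together with the remarks that followed it, and the third part of Corollary~\ref{coralgebra}) that the class of minus-algebras with override is \emph{already} a finitely based variety: indeed we have two routes to this, one via the equational axiomatisation of Leech's right-handed skew Boolean algebras (shown equivalent to our axioms by the discussion preceding this corollary), and one via the earlier-noted result that replacing the single quasiequational law of minus-algebras by the equational law $x=(y\circ x)\sqcup(x-y)$ yields an equivalent axiomatisation. Either way, minus-algebras with override form an equational class.

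The remaining ingredient is simply that, as noted at the end of Subsection~\ref{cap} and reiterated in the sentence immediately preceding this corollary, passing from minus-algebras with override to minus-algebras with intersection and override requires only \emph{adjoining finitely many more equations} --- namely the right normal band with intersection laws ($(x\cap y)\circ x=x\cap y$, $x\circ(y\cap z)=(x\circ y)\cap z$, and the semilattice laws for $\cap$), with $\circ$ here being the term $x\circ y=y-(y-x)$ already available in the signature. So the plan is: first invoke the equational axiomatisation of minus-algebras with override; second, observe that the additional ``with intersection'' laws are themselves equations; third, conclude that minus-algebras with intersection and override are axiomatised by the union of these two equational sets, hence form a variety. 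Finally, for minus-semigroups with intersection and override, one adds the composition laws --- associativity of $\cdot$, the 1-stack laws $a\circ(bc)=(a\circ b)c$ and $a(b\circ c)=ab\circ ac$, the distributivity law $s(t-u)=st-su$, and $s(t\cap u)=st\cap su$ --- all of which are equational, so this class too is defined by equations.

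\begin{proof}
By the third part of Corollary~\ref{coralgebra} together with the discussion above (in particular the equational axiomatisation of minus-algebras with override afforded by right-handed skew Boolean algebras, or equivalently by replacing the quasiequational law of minus-algebras with the law $x=(y\circ x)\sqcup(x-y)$), the class of minus-algebras with override is a finitely based variety. Adding intersection to the signature, as explained in Subsection~\ref{cap}, requires only adjoining the finitely many equational laws stating that $(A,\circ,\cap)$ is a right normal band with intersection (with $\circ$ the definable term $x\circ y=y-(y-x)$). Hence the class of minus-algebras with intersection and override is axiomatised by a finite set of equations, so is a variety. The class of minus-semigroups with intersection and override is obtained from it by further adjoining the (equational) 1-stack laws for $\cdot$ and $\circ$, the law $s(t-u)=st-su$, and the law $s(t\cap u)=st\cap su$; all of these are equations, so this class is a variety as well.
\end{proof}

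**Main obstacle.** There is essentially no obstacle here: the real content was done in the preceding results (the equational axiomatisability of minus-algebras with override, where the genuine work --- eliminating the quasiequational law --- took place), and all that is needed now is the bookkeeping observation that every law used to enrich the signature with $\cap$ (and with $\cdot$) is itself equational, so equationality is preserved under the enrichment.
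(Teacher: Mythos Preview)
Your proposal is correct and follows essentially the same route as the paper: the paper's argument is precisely the one-line observation preceding the corollary, namely that since minus-algebras with override form a finitely based variety (via the replacement of the quasiequation by $x=(y\circ x)\sqcup(x-y)$), and since adding intersection requires only the further equational laws of Subsection~\ref{cap}, the enriched class is still a variety. Your treatment of the semigroup case by adjoining the remaining equational laws is likewise exactly what the paper intends.
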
 




\begin{pro}   \label{minusupvar}
The class of minus-algebras with update is a finitely based variety; hence so is the class of minus-semigroups with update.
\end{pro}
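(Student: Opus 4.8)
The plan is to produce an equivalent purely equational axiomatisation, replacing the one quasi-equational law inherited from minus-algebras --- the cancellation law $s-x=t-x\wedge x\circ s=x\circ t\Rightarrow s=t$ --- by an identity, while keeping all the other (equational) minus-algebra laws and the update laws (2)--(4), exactly as was done one proposition earlier for minus-algebras with override. The proposed replacement is
$$x=(x\diamond y)\diamond(y\circ x),$$
which functionally first overwrites $x$ on $\dom y$ (producing $x\diamond y$) and then overwrites the result back, on $\dom x\cap\dom y$, by $x$ itself, the latter carried along as $y\circ x$; I would also adjoin the sound identity $x\diamond y=x\diamond(x\circ y)$. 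Soundness of both identities for partial functions is a routine case analysis on domains (it is also immediate from the representation in Corollary~\ref{coralgebra}), so every minus-algebra with update satisfies them.

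For the converse I would show that any algebra satisfying the equational minus-algebra laws, the update laws, and the two new identities satisfies the cancellation law. Given $s-x=t-x$ and $x\circ s=x\circ t$, the right normal band law yields $s\circ x=(x\circ s)\circ x=(x\circ t)\circ x=t\circ x$. Applying the first new identity gives $s=(s\diamond x)\diamond(x\circ s)$ and $t=(t\diamond x)\diamond(x\circ t)$, so since $x\circ s=x\circ t$ the problem reduces to proving $s\diamond x=t\diamond x$; via the second new identity this becomes $s\diamond(s\circ x)=t\diamond(t\circ x)$, and since $s\circ x=t\circ x$ it is an equality $s\diamond c=t\diamond c$ with $c:=s\circ x=t\circ x$. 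To close this I would exploit that $c\lesssim s$, $c\lesssim t$, $c\le x$, that $c\circ s=x\circ s=x\circ t=c\circ t$, and that $s-c=s-x=t-x=t-c$ (using the minus-algebra identity $u-(u\circ v)=u-v$), feeding these through the update laws and a further application of the first new identity.

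The main obstacle is precisely this last equality $s\diamond x=t\diamond x$, equivalently the cancellation law itself. There is no term of the signature $(\circ,-,\diamond)$ that reconstructs $s\diamond x$ from the pinned data $s-x$ and $x\circ s$: functionally that reconstruction is the preferential union of $x\circ s$ and $s-x$, two functions whose domains partition $\dom s$, whereas the domain of every term of this signature is contained in the domain of one of its variables, so no term in $s-x$, $x\circ s$ (and $x$) can have domain $\dom s$ in general. Hence the cancellation law cannot be trivialised by substitution and must be derived as a genuine quasi-equational consequence of the equational theory; I expect this to require a short but delicate chain of rewrites using the update laws and the new identities, possibly after adjoining one or two more sound identities. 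Once the equational axiomatisation of minus-algebras with update is in place, the minus-semigroup version is immediate, since all the composition laws are equational.
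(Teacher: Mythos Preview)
Your overall strategy --- replace the single quasi\-equation by finitely many sound identities --- is exactly what the paper does, and your soundness checks are fine. But the specific identities you have chosen do not close the argument, and your own analysis shows why: after applying $x=(x\diamond y)\diamond(y\circ x)$ with $y=x$ you reduce to $s\diamond x=t\diamond x$, and then you observe that $(s\diamond x)-x=s-x=t-x=(t\diamond x)-x$ and $x\circ(s\diamond x)=s\circ x=t\circ x=x\circ(t\diamond x)$. That is precisely the original cancellation hypothesis again, now for the pair $(s\diamond x,\,t\diamond x)$. So your two-variable identity only replaces one instance of the quasiequation by another, and no amount of iteration escapes this loop. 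The missing idea is not ``one or two more identities'' of the same shape; it is a genuinely different manoeuvre.

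The paper's proof uses two ingredients you do not have. First, working only with the minus laws it shows from the hypotheses that $c-a=0$ and $a-c=0$ (via the sound law $x-y=(x-(y-z))-y$), whence $c\diamond a=c\circ a=a$; this collapses one of the unknowns inside an update. Second, it introduces the \emph{three-variable} sound identity
\[
((y\diamond x)\diamond(y-z))\diamond(z\circ y)=y,
\]
together with $x\diamond(x-y)=x$. Instantiating the first at $(x,y,z)=(a,c,b)$ expresses $c$ in terms of $c\diamond a$, $c-b$ and $b\circ c$, each of which has already been shown equal to the corresponding expression in $a$; the second identity then finishes. The point is that this identity reassembles $y$ from three pieces whose equality with the $a$-pieces is either a hypothesis or the consequence $c\diamond a=a$ obtained in the first step --- something your two-variable identity cannot do because it never separates the ``outside $\dom z$'' part from the ``inside $\dom z$'' part with an independent third variable.
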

\begin{proof}
It suffices to show that the one quasiequation in the definition of minus-algebras follows from finitely many sound equations (which could therefore be added to that axiomatisation in place of it to give a finite equational axiomatisation).

Suppose $(A,\circ,-,\diamond)$ is a minus-algebra with update; hence it is functionally representable.  Suppose $a,b,c\in A$ are such that 
$c-b=a-b$ and $b\circ c=b\circ a$.  Our goal is to show that $a=c$.

Then using the sound law $x-y=(x-(y-z))-y$, we obtain 
$$c-a=(c-(a-b))-a=(c-(c-b))-a=(b\circ c)-a=b\circ a-a=0,$$
on using the sound law $x\circ y-y=0$.
So using the sound laws $x\circ y=(x\diamond y)-(x-y)$ and $x-0=x$, we obtain 
$$c\circ a=(c\diamond a)-(c-a)=(c\diamond a)-0=c\diamond a.$$

Now using the law $(x-y)\circ z=(x\circ z)-y$ for minus-algebras and again using the law $(x\circ y)-y=0$, we have 
$$(b-c)\circ a=(b\circ a)-c=(b\circ c)-c=0.$$  Hence $a-(b-c)=a-((b-c)\circ a))=a-0=a,$
upon using the further sound law $x-y=x-(y\circ x)$.
So again using the law $x-y=(x-(y-z))-y$, we see that  
$$a-c=(a-(c-b))-c=(a-(a-b))-c=(b\circ a)-c=0$$
from earlier.  So $c\diamond a=c\circ a=a-(a-c)=a-0=a$.

Now we note the following laws are sound:
$$((y\diamond x)\diamond(y-z))\diamond(z\circ y)=y,\ x\diamond (x-y)=x.$$
(The first is, because $(y\diamond x)\diamond(y-z)$ has domain the same as $y$, and agrees with $y$ outside of the domain of $z$, so updating with $y$ on the domain of $z$ forces it to equal $y$.  The second is obvious.)
Hence, $$c=((c\diamond a)\diamond (a-b))\diamond (b\circ a)=(a\diamond (a-b))\diamond (b\circ a)=a\diamond (b\circ a)
=a\diamond (a-(a-b))=a,$$
as required.
\end{proof}

\begin{cor}
The class of minus-algebras with intersection and update is a finitely based variety.
\end{cor}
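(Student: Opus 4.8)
The plan is to reduce this corollary to Proposition~\ref{minusupvar}. By definition (see the remark at the end of Subsection~\ref{cap}, together with Subsection~\ref{subsec:minusup}), the class of minus-algebras with intersection and update is the class of algebras $(A,\circ,-,\diamond,\cap)$ satisfying the minus-algebra with update axioms of Subsection~\ref{subsec:minusup} together with the right normal band with intersection laws of Subsection~\ref{cap}. Every law in the latter group is an equation, and the only non-equational axiom in the former group is the single quasiequation inherited from the definition of minus-algebra.

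First I would invoke Proposition~\ref{minusupvar}: its proof shows that, in the presence of the remaining (equational) minus-algebra-with-update axioms, that quasiequation is a consequence of finitely many sound equations, so the class of minus-algebras with update admits a finite \emph{equational} axiomatisation. Replacing the quasiequation by those equations and then adjoining the finitely many right normal band with intersection laws yields a finite equational axiomatisation of the class of minus-algebras with intersection and update. Hence it is a finitely based variety.

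There is essentially nothing hard here: the content lies entirely in Proposition~\ref{minusupvar}, and the only thing to verify is the bookkeeping observation that passing to ``with intersection'' introduces no new quasiequation, which is clear from the way that enrichment was defined. Alternatively, one can simply re-run the computation in the proof of Proposition~\ref{minusupvar} inside an arbitrary minus-algebra with intersection and update: each step there is an equational manipulation in $\circ$, $-$ and $\diamond$ that does not involve $\cap$, so it goes through verbatim, again witnessing that the quasiequation is equationally derivable and leaving a finite equational axiomatisation for the enriched class.
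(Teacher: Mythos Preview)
Your proposal is correct and matches the paper's approach exactly: the corollary is stated without proof because it follows immediately from Proposition~\ref{minusupvar} together with the observation that the ``with intersection'' enrichment adds only finitely many equational axioms (Subsection~\ref{cap}), so no new quasiequation appears.
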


For the signature consisting of minus, override and intersection, we saw earlier that it was trivial to add $\cap$ to the signature of minus-algebras with override using  finitely many more equations as in Subsection \ref{cap}, yielding a finitely based variety with three binary operations.  Alternatively, we can use Proposition \ref{nicemeet} and simply add the law 
$$x=x\circ y \And x=x\circ z \Leftrightarrow x=x\circ(y\circ z).$$
Now recalling that the class of minus-algebras with override is the same as the class of right handed skew Boolean algebras, this last observation leads us to infer that the class of minus-algebras with intersection and override is nothing but the class of {\em right handed skew Boolean intersection algebras} in the sense of \cite{bigleech}, which as noted in \cite{CLS} is also the same as the class of associative Boolean NLOs as defined by Cirulis in \cite{cirulis}.

From the remarks made in the first section, any of these axioms for the signature $(-,\sqcup,\cap)$ can then give a finite axiomatisation of the algebras of signature $(\backslash,\sqcup)$, which as noted above is a natural enrichment of the signature $(-,\sqcup)$ considered in \cite{minusover} and is the richest composition-free signature considered here.  This richest signature may usefully be thought of in yet another equivalent way.  

The operation of {\em generalised comparison} for partial functions was defined in \cite{compsemi}, (at least for the case $Y=X$ but the definition is identical in the general case).  Its definition is as follows: for $f,g,h,k\in P(X,Y)$ and $x\in X$,
\[
(f,g)[h,k](x):=\begin{cases} h(x)&\mbox{ if }f(x)=g(x)\mbox{ or neither is defined}\\
k(x)&\mbox{ otherwise.}
\end{cases}
\]
In short, $(f,g)[h,k]$ is $h$ when $f,g$ do not disagree and $k$ otherwise.  As follows easily from what is observed there for the special case in which $Y=X$, it is easy enough to see that $$f\backslash g=(f,f\cap g)[0,f]\mbox{ and }f\sqcup g=(f,g)[g,f].$$
Indeed we may explicitly write
\bi
\item $f\cap g=(f,g)[f,0]$
\item $f\circ g=(f,0)[0,g]$
\item $g-f=(f,0)[g,0]$.
\ei
Conversely, it is straightforward to verify that 
$$(f,g)[h,k]=(f\cap g)\circ h\sqcup ((h-f)-g)\sqcup k\mbox{ and }0=f-f,$$
so the signature consisting of generalised comparison and zero is also equivalent to $(\backslash,\sqcup)$ and hence to $(-,\cap,\sqcup)$.  A functional representation of generalised comparison semigroups with zero was given in \cite{compsemi}, which therefore provides a prior proof of the special case case of \eqref{minusmore} in Theorem \ref{main2} applied to \eqref{overmain}, and similarly for the corresponding parts of Corollary \ref{coralgebra}.

On this topic, it is shown in \cite{bigleech} that the variety $PD_0$ generated by the so-called pointed discriminator algebras is equivalent to the variety of skew Boolean intersection algebras.  A special case of this gives that the class of right handed skew Boolean intersection algebras (hence also of these various other classes) is term equivalent to the variety of generalised comparison algebras with zero as in \cite{compsemi}, something which follows immediately from the above remarks (since both are term equivalent to the variety of minus-algebras with intersection and override).
  
We turn to the remaining signatures, which do not include minus.  

As we have seen, the class of minus-semigroups with intersection and override axiomatises the algebras of functions under the signature $(-,\sqcup,\cap)$, and in fact these axioms are equivalent to those of right handed skew Boolean intersection algebras.  This latter class consists of those right handed skew Boolean algebras in which every two elements have a meet $\cap$ under the natural order.  This follows from the fact that the class of right normal bands with intersection is nothing but the class of right normal bands such that every two elements have a meet under the natural order.  Since partial functions under signature $(\circ,\sqcup)$ are axiomatisable as the class of right handed strongly distributive skew lattices, one might imagine by analogy that the class of right normal bands with intersection and override may be axiomatisable as those right handed strongly distributive skew lattices in which every two elements have a meet under the natural order.  But this is not the case.


\begin{eg}\label{eg:droi}
Consider the set of partial functions $A=\{1,i,e,f,0\}$ on $X=\{a,b\}$ defined as follows: 
\[
1=\{(a,a),(b,b)\},\ i=\{(a,b),(b,b)\},\ e=\{(a,a)\},\ f=\{(a,b)\},\ 0=\varnothing.
\]
It is easily checked that $A$ is closed under domain restriction and override, so is a right handed strongly distributive skew lattice. Further, any two elements $x,y\in A$ have a least upper bound $x\wedge y$ under the natural order.  
However, 
$$((e\sqcup i)\wedge i)\sqcup e=(1\wedge i)\sqcup e=0\sqcup e=e\neq 1=e\sqcup i,$$
so the axiom $((x\sqcup y)\cap y)\sqcup x=x\sqcup y$ for right normal bands with intersection and override does not hold.
\end{eg}

%

We remark that this example is a right normal band having meet with respect to its natural order, so by Proposition \ref{nicemeet}, it must be faithfully representable in terms of partial functions as such.  This may be achieved by adjusting the definition of its elements so that $i$ becomes $\{(a,b),(b,a)\}$ with all others unchanged.  However, once this is done, $\sqcup$ is no longer correctly represented since $e\sqcup i$ no longer equals $1$.

One remedy is to add to the laws for right-handed strongly distributive skew lattices all of the laws for right normal bands with intersection and override as in Subsection \ref{sub:intover} (which by the sixth part of Corollary \ref{coralgebra} are complete for this signature), although the result contains repetitions and probably further redundancy.  These redundancies are explored in the supplementary material in Section~\ref{sec:supp}.

The class of right normal bands with intersection and update includes a quasiequation in its axiomatic definition.  This cannot be replaced by equations.

\begin{pro}  \label{updatecapqv}
The class of right normal bands with intersection and update is properly quasi-equational, and so is the class of 1-stacks with intersection and update.
\end{pro}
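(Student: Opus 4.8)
The plan is to follow the template of the proof of Proposition~\ref{properqe}: exhibit a right normal band with intersection and update $A$ together with a congruence $\theta$ of $(A,\circ,\cap,\diamond)$ such that the quotient $A/\theta$ fails the implication law (axiom~(5) in the definition of the class). Since axioms (1)--(4) are purely equational they pass to $A/\theta$, so $A/\theta$ satisfies everything \emph{except} (5); this shows (5) is not derivable from (1)--(4) within the equational theory, i.e.\ the class is not closed under homomorphic images and hence is not a variety. It is convenient to take $A$ to be an algebra of partial functions, since then it is a right normal band with intersection and update automatically, by soundness. Granting such an $A$ and $\theta$, the statement for 1-stacks with intersection and update follows by the device used throughout the paper: set $st:=0$ for all $s,t$ in the example; as in Propositions~\ref{minusnice} and~\ref{updatenicembed} this produces a 1-stack with intersection and update, no new elements are introduced so $\theta$ stays a congruence, and the quotient still fails (5) (a law not mentioning $\cdot$). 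Equivalently, were 1-stacks with intersection and update a variety, restricting to the members in which all products are $0$ would make right normal bands with intersection and update a variety.

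The real work is the construction of the example, and here one cannot simply recycle the $7$-element minus-algebra used in the proof of Proposition~\ref{properqe}: that algebra is not closed under $\cap$, and once one closes it under $\cap$ and $\diamond$ the congruence identifying $e$ and $f$ is forced --- via the new elements $e\cap c$ and $f\cap c$, and then via applications of $\diamond$ --- to collapse the whole algebra. The underlying reason is that, for a functional algebra, the hypotheses of (5) force its conclusion as soon as the functions involved have suitably nested domains; so the example must use functions $x,y,a,b$ with deliberately non-nested domains. What one wants is an instance of (5) in which the ``disagreement'' of $a$ and $b$ over $\dom(x)$ --- which must survive in $x\circ a$ versus $x\circ b$, so that (5) can fail in the quotient --- is nonetheless removed by the domain restriction to $r:=x\cap(x\diamond y)$, so that $r\circ a$ and $r\circ b$, and likewise $y\circ a$ and $y\circ b$, get merged by $\theta$ while $x\circ a$ and $x\circ b$ do not. (Such a configuration is small enough to be sought by machine, as with the other fine-tuning in this paper.)

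With the example in hand, the verification splits into three parts: (i) closure of the chosen set of partial functions under $\circ$, $\cap$ and $\diamond$ --- routine, but needing care because $\circ$ and $\diamond$ readily generate new functions; (ii) checking that the chosen partition $\theta$ is a congruence --- the clauses for $\circ$ run exactly as in Proposition~\ref{properqe}, but stability under $\cap$ and under $\diamond$ is the new ingredient, and (keeping the nontrivial block small) it reduces to finitely many cases $e'\cap z$, $z\cap e'$, $e'\diamond z$, $z\diamond e'$ with $e'$ in a nontrivial block; and (iii) exhibiting the explicit tuple $(x,y,a,b)$ and checking in $A$ that $y\circ a=y\circ b$, that $(x\cap(x\diamond y))\circ a$ and $(x\cap(x\diamond y))\circ b$ lie in a common $\theta$-block, while $x\circ a$ and $x\circ b$ lie in different $\theta$-blocks, so that axiom~(5) fails in $A/\theta$. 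I expect step (ii), the compatibility of $\theta$ with $\cap$ and $\diamond$, to be the main obstacle: because $\cap$ is rigid (the meet of two functions with empty agreement-set is the empty function), naive small examples have their candidate congruences collapse, so the example must be engineered precisely so that its nontrivial block interacts gently with both $\cap$ and $\diamond$.
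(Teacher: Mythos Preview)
Your plan is precisely the paper's: exhibit a functional $(\circ,\cap,\diamond)$-algebra $A$ and a congruence $\theta$ such that axiom~(5) fails in $A/\theta$, then obtain the 1-stack statement by declaring all products zero. Your analysis of the required configuration---the domains of $x\cap(x\diamond y)$ and of $y$ together covering $\dom(x)$, with the $a$/$b$ disagreement supported over the first piece and collapsed by $\theta$---is also on target.

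The gap is simply that you have not produced the example, and your closing remarks suggest you expect it to be delicate. It is not. The paper uses just six partial functions from $\{p,q\}$ to $\{p',q'\}$: take $a=\{(p,p'),(q,q')\}$, $b=\{(q,p')\}$, $c=\{(q,q')\}$, $d=\{(p,p'),(q,p')\}$, $e=\{(p,p')\}$, $0=\varnothing$, and let $\theta$ have sole nontrivial block $\{e,0\}$. Instantiating axiom~(5) at $(x,y,u,v)=(a,b,a,c)$: one has $a\diamond b=d$ and $a\cap d=e$, so $e\circ a=e\mathrel{\theta}0=e\circ c$ and $b\circ a=c=b\circ c$, yet $a\circ a=a\not\mathrel{\theta}c=a\circ c$. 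Your worry about step~(ii) largely evaporates with this choice of block: since $0$ is absorbing for $\circ$ and $\cap$ and satisfies $z\diamond 0=z$, $0\diamond z=0$, and since $e$ has singleton domain $\{p\}$ on which it agrees with every element of $A$ defined there, replacing $e$ by $0$ in any of $e\circ z$, $z\circ e$, $e\cap z$, $e\diamond z$, $z\diamond e$ either leaves the value unchanged or moves it within $\{e,0\}$. So the ``engineering'' you anticipate amounts to choosing the nontrivial block to contain $0$ and a function with one-point domain.
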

\begin{proof}
In $P(X_1,X_2)$, where $X_1=\{x,y\}$ and $X_2=\{x',y'\}$, with all four of $x,y,x',y'$ distinct, let $I$ denote the function $\{(x,x'),(y,y')\}$, $b=\{(y,x')\}$, $c=\{(y,y')\}$, $d=\{(x,x'),(y,x')\}$, $e=\{(x,x')\}$ and let $0$ denote the empty function.  Let $A=\{1,b,c,d,e,0\}$.  It is routine to check that $A$ is closed under $\circ,\diamond,\cap$ and is therefore a right normal band with intersection and update.  It is even closed under composition, with all composites equalling $0$, and so is a 1-stack with intersection and update.    The various Cayley tables for the operations (other than composition) are as follows.

\begin{center}
\begin{tabular}{c|cccccc}
$\circ$&$1$&$b$&$c$&$d$&$e$&$0$\\
\hline
$1$&$1$&$b$&$c$&$d$&$e$&$0$\\
$b$&$c$&$b$&$c$&$b$&$0$&$0$\\
$c$&$c$&$b$&$c$&$b$&$0$&$0$\\
$d$&$a$&$b$&$c$&$d$&$e$&$0$\\
$e$&$e$&$0$&$0$&$e$&$e$&$0$\\
$0$&$0$&$0$&$0$&$0$&$0$&$0$
\end{tabular}
\hspace{3cm}
\begin{tabular}{c|cccccc}
$\diamond$&$1$&$b$&$c$&$d$&$e$&$0$\\
\hline
$1$&$1$&$d$&$1$&$d$&$1$&$1$\\
$b$&$c$&$b$&$c$&$b$&$b$&$b$\\
$c$&$c$&$b$&$c$&$b$&$c$&$c$\\
$d$&$a$&$d$&$a$&$d$&$d$&$d$\\
$e$&$e$&$e$&$e$&$e$&$e$&$e$\\
$0$&$0$&$0$&$0$&$0$&$0$&$0$
\end{tabular}
\end{center}
\medskip

\begin{center}
\begin{tabular}{c|cccccc}
$\cap$&$1$&$b$&$c$&$d$&$e$&$0$\\
\hline
$1$&$1$&$0$&$c$&$e$&$e$&$0$\\
$b$&$0$&$b$&$0$&$b$&$0$&$0$\\
$c$&$c$&$0$&$c$&$0$&$0$&$0$\\
$d$&$e$&$b$&$0$&$d$&$e$&$0$\\
$e$&$e$&$0$&$0$&$e$&$e$&$0$\\
$0$&$0$&$0$&$0$&$0$&$0$&$0$
\end{tabular}
\end{center}
\medskip

\noindent
From these, it is easy to see that the equivalence relation $\theta$ in which $(e,0)\in \theta$, with all other elements of $A$ in their own $\theta$-classes, is a congruence with respect to all of these three operations, and trivially with respect to composition as well.  However, in the quotient algebra $A/\theta$ equipped with the induced operations, the one quasiequation in the definition of right normal bands with update fails.  This is because 
$$(a\cap(a\diamond b))\circ a=(a\cap d)\circ a=e\circ a=e$$
and
$$(a\cap(a\diamond b)))\circ c=e\circ c=0\ \theta\ e,$$
and moreover $b\circ a=c=b\circ c$.  However, 
$a\circ a=a$ while $b\circ a=c$ and $(a,c)\not\in \theta$.
\end{proof}

Here is a table summarizing what is known.  In it, if the status of a case is known, there is an  axiomatisation witnessing that status that is finitely based. The cases shown here for the first time (as far as we know) are indicated with a ``$*$".
\bigskip

\begin{tabular}{|c|c|c|}
\hline
signature&composition-free&with composition\\
\hline
$(\circ)$&variety (right normal bands)&variety (1-stacks)\\
$(\circ,\cap)$&variety $*$&variety $*$\\
$(-)$&proper quasivariety $*$&proper quasivariety $*$\\
$(-,\cap)$&variety  (\cite{borlido})& variety $*$\\
$(-,\sqcup)$&variety (RH SBAs)&variety $*$\\
$(-,\sqcup,\cap)$&variety (RH SBIAs)&variety (comparison semis with zero)\\
$(-,\diamond)$&variety $*$&variety $*$\\
$(-,\diamond,\cap)$&variety $*$&variety $*$\\
$(\circ,\sqcup)$&variety (RH strongly dist. skew lattices)&unknown\\
$(\circ,\diamond)$&unknown&unknown\\
$(\circ,\sqcup,\cap)$&variety $*$&variety $*$\\
$(\circ,\diamond,\cap)$&proper quasivariety $*$&proper quasivariety $*$\\
\hline
\end{tabular}
\bigskip

\section{Left restriction semigroups and their enrichments}\label{sec:restriction}

In this section we connect the classes of algebras considered here with left restriction semigroups and their enrichments.  This material only applies to the cases in which composition is being modelled, so at least 1-stacks.

A {\em left restriction semigroup} is a unary semigroup with unary operation $D$ satisfying the following laws:
\bi
\item $D(x)x=x$;
\item $D(x) D(y)=D(y)D(x)$;
\item $D(D(x)y)=D(D(x)D(y))$;
\item  $xD(y)=D(xy)x$.
\ei
It is well-known that the laws for left restriction semigroups axiomatise the subalgebras of $PT(X)$ under composition and domain, defined as follows:
$$D(f)=\{(x,x)\in X\times X\mid x\in dom(f)\}.$$
This was first shown by \cite{trok}, albeit using slightly different axioms and terminology.

The following consequences of these laws are easily shown and in any case well-known.

\begin{lem}\label{lem:extralaws}
If $(A,\cdot,D)$ is a left restriction semigroup, then for $x,y\in A$,
\ben
\item $D(x)^2=D(x)$,
\item $D(x)D(y)=D(y)D(x)=D(D(x)y)$,
\item $D(xy)D(x)=D(xy)$,
\item $D(xy)=D(xD(y))$.
\een 
\end{lem}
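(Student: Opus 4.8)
The plan is to verify each of the four identities in turn, using only the four defining laws of a left restriction semigroup together with the identities already proved as we go (so that later parts may lean on earlier ones). The whole proof is a sequence of short equational manipulations; the main subtlety is getting idempotence of $D(x)$ first, since everything else implicitly uses it.

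For (1), I would apply the law $xD(y)=D(xy)x$ with $x$ replaced by $D(x)$ and $y$ by $x$, obtaining $D(x)D(x)=D(D(x)x)D(x)$, and then simplify $D(D(x)x)$ to $D(x)$ using $D(x)x=x$. For (2), commutativity $D(x)D(y)=D(y)D(x)$ is immediate from the second defining law; for the equality with $D(D(x)y)$, I would start from the third law $D(D(x)y)=D(D(x)D(y))$ and then show $D(D(x)D(y))=D(x)D(y)$, which follows by applying $D(z)z=z$ with $z=D(x)D(y)$ after noting (via part (1) and the first two laws) that $D(x)D(y)$ is itself of the form $D(w)$ — more directly, $D(D(x)D(y))D(x)D(y)=D(x)D(y)$ and one also checks $D(D(x)D(y))=D(x)D(y)$ by observing $D(x)D(y)$ is idempotent and equals its own $D$-image; alternatively one can just cite that in a left restriction semigroup $D$ restricted to the image of $D$ is the identity. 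I expect the cleanest route is: $D(x)D(y)$ is idempotent (square it, use commutativity and part (1)), and an idempotent $e$ in the image of $D$ satisfies $D(e)=e$ because $D(e)e=e$ and, writing $e=D(a)$, $D(e)=D(D(a))=D(D(a)D(a))$ reduces back. I would present whichever of these is shortest once the details are in front of me.

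For (3), I would use the fourth law in the form $xD(y)=D(xy)x$ and hit it with $D$: since $D(xy)$ is an idempotent of the image of $D$, left-multiplying the equation $D(xy)x = xD(y)$ appropriately and applying $D$ yields $D(D(xy)x)=D(xD(y))=D(D(xy)\cdot\text{something})$; more straightforwardly, $D(xy)D(x) = D(xy \cdot \text{?})$ — the slick argument is to write $D(xy)=D(x D(y))$ first (that is part (4)) and then observe $D(xD(y))D(x) = D(D(x)\cdot xD(y)\cdot\ldots)$. To avoid circularity I would actually prove (4) before (3): from $xD(y)=D(xy)x$ apply $D$ to both sides to get $D(xD(y))=D(D(xy)x)$, then simplify the right side using $D(D(z)w)=D(z)D(w)$ (part (2)) with $z=xy$, $w=x$, giving $D(xy)D(x)$, and finally note $D(xy)D(x)=D(xy)$ because $D(x)$ absorbs into $D(xy)$ — this last absorption, $D(xy)D(x)=D(xy)$, is exactly (3). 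So the honest dependency order is (1), then (2), then (3) (via: $D(xy)D(x)=D(xy)D(D(x)x)=\ldots$ using $D(x)x=x$ to see $D(xy)=D(xy)D(x)$ after rewriting $xy=D(x)\cdot xy$? no — rather use $xy = D(xy)xy$ and $D(x)\cdot xy = xy$ since $D(x)x=x$, hence $D(D(x)\cdot xy)=D(xy)$, and the left side is $D(x)D(xy)=D(xy)D(x)$ by (2)), and then (4) follows from (3) plus the $D$-image of the fourth defining law as above.

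The main obstacle, such as it is, will be purely bookkeeping: making sure the chain of rewrites is genuinely acyclic and that each step cites only a law or a part of the lemma already established. There is no conceptual difficulty — every step is a one-line application of a defining identity plus idempotence — so I would keep the write-up to a compact displayed computation for each of (1)--(4), in the order (1), (2), (3), (4).
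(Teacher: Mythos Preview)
The paper itself gives no proof of this lemma, declaring the identities ``easily shown and in any case well-known'', so there is nothing to compare against. I will therefore just assess your argument.

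Your proof of (1) is circular. Substituting $x\mapsto D(x)$, $y\mapsto x$ into the law $xD(y)=D(xy)x$ yields
\[
D(x)D(x)=D(D(x)x)\,D(x)=D(x)\,D(x),
\]
since $D(x)x=x$; this is the tautology $D(x)^2=D(x)^2$, not $D(x)^2=D(x)$. A route that does work is to first establish $D(D(x))=D(x)$: from $D(x)x=x$ and the third axiom one gets $D(x)=D(D(x)D(x))$, and applying the third axiom once more with $y=D(x)$ gives $D(x)=D\bigl(D(x)D(D(x))\bigr)$; but $D(x)D(D(x))=D(D(x))D(x)=D(x)$ by the second and first axioms, so $D(x)=D(D(x))$. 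Then (1) is immediate from $D(D(x))D(x)=D(x)$.

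Your treatment of (2) is too loose to count as a proof: you offer several alternatives, none carried through, and phrases like ``an idempotent $e$ in the image of $D$ satisfies $D(e)=e$'' or ``$D$ restricted to the image of $D$ is the identity'' are exactly what needs proving here. Once you have $D(D(z))=D(z)$ and (1), a clean line is: from $D(x)\cdot D(x)y=D(x)y$ and the third axiom, $D(D(x)y)=D\bigl(D(D(x)y)D(x)\bigr)$; now the fourth axiom with $D(D(x)y)$ in the first slot, together with (1), gives $D(D(x)y)D(x)=D(D(x)y)$, and then the fourth axiom (with $x\mapsto D(x)$) gives $D(x)D(y)=D(D(x)y)D(x)=D(D(x)y)$.

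Your eventual plan for (3) and (4) --- derive (3) from (2) via $D(x)\cdot xy=xy$, then get (4) by applying $D$ to $xD(y)=D(xy)x$ and using (2) and (3) --- is correct and is the natural argument. The issue is only upstream, in (1) and (2).
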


It is well-known, and in any case follows from the well-known representation theorem for left restriction semigroups, that every left restriction semigroup $(A,\cdot, D)$ gives a 1-stack $(A,\cdot,\circ)$ if we define $s\circ t=D(s)t$; call this the {\em derived 1-stack} of the left restriction semigroup.  However, in general 1-stacks are not equivalent to left restriction semigroups, since there are 1-stacks of functions containing no restrictions of the identity map.  

One situation in which 1-stacks and left restriction semigroups do correspond arises when the operation modelling composition in a 1-stack has an identity element $1$, for then we may define $D(s)=s\circ 1$ and obtain a left restriction semigroup from which the original 1-stack may be derived.  In fact, if an identity is present, every type of enriched 1-stack so far considered can be viewed as being a type of enriched left restriction semigroup with identity.  By modifying the representation~$\Phi$ used in Proposition \ref{main} by not adjoining a new identity element $e$ to $A$ and making other small tweaks, it is possible to ensure the resulting $\Phi$ is still a faithful representation, but one in which~$1$ is represented as the identity function.  We could then go on to obtain results for enriched left restriction monoids corresponding to our earlier results for enriched 1-stacks.

We note that each of the operations considered here can be expressed in the language of the modal restriction semigroups considered in \cite{modrest}.  There, algebras of partial functions in $PT(X)$ equipped with at least composition and ``antidomain" $A$, where $A(t)$ is the identity map on $X$ restricted to the complement of $\dom(t)$, $t\in PT(X)$ together with some or all of intersection and override (there called ``preferential union") in all possible combinations, were axiomatised.   Thus:
$$D(s):=A(A(s)),\ s\circ t:=D(s)t,\ s-t:=A(t)s,\ s\diamond t:=D(s)(t\sqcup s).$$
Conversely, the signature $(\cdot ,1,A)$, where $1$ is the identity function, is easily seen to be equivalent to $(\cdot,1,-)$, since $A(s)=1-s$ for all $s\in PT(X)$, so modal restriction semigroups are nothing but minus-semigroups with identity.  This then extends to the various other signatures.   The above fairly straightforward adjustment to the definition of $\Phi$ in Proposition \ref{main} does represent $1\in A$ correctly, and all else still works, so we could thereby recover the axiomatisations obtained in \cite{modrest}.

However, the signatures considered in this article do not include the identity function, so axiomatiations for algebras different to those considered in \cite{modrest} may be obtained.  Instead, we start with an arbitrary left restriction semigroup $(A,\cdot, D)$, possibly without identity, and represent its derived 1-stack using $\Phi$ as in Proposition \ref{main}.

\begin{pro}  \label{lrs}
Let $(A,\cdot,D)$ be a left restriction semigroup with $(A,\cdot,\circ)$ the derived 1-stack, and suppose ${\mc F}$ is any separating set of filters of $A$.  Then $\Phi$ respects $D$.
\end{pro}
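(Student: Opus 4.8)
The plan is to show that the partial function $\Phi(D(a))$ on $X$ equals the restriction of the identity map on $X$ to $\dom(\Phi(a))$, which is the functional meaning of ``respecting $D$'' once we recall that $D$ on partial functions produces exactly such an identity restriction. Since $\Phi$ is built patchwise over the separating family ${\mc F}$, it suffices to work on a single patch $A_F$ for $F\in{\mc F}$ and show, for each $\overline{x}^F\in A_F$, that $\phi^F_{D(a)}$ is defined at $\overline{x}^F$ exactly when $\phi^F_a$ is, and that in that case $\phi^F_{D(a)}(\overline{x}^F)=\overline{x}^F$.

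First I would handle the domain. By definition $\phi^F_{D(a)}(\overline{x}^F)$ is defined iff $x\cdot D(a)\in F$, and $\phi^F_a(\overline{x}^F)$ is defined iff $xa\in F$. Using Lemma \ref{lem:extralaws}(4) we have $D(xa)=D(x\cdot D(a))$, and since in a left restriction semigroup $D(s)\lesssim s$ and $s\lesssim D(s)$ for the relevant quasiorder structure (more precisely $D(s)\circ s = s$ and $s\circ D(s)=\dots$; concretely $D(xa)\sim xa$ in the derived $1$-stack because $D(xa)(xa)=xa$ gives $xa\lesssim D(xa)$, and $D(xa)=D(xa)D(xa)=D(D(xa)\cdot xa)$-type manipulations give the reverse), one gets $x\cdot D(a)\sim xa$, hence $x\cdot D(a)\in F\iff xa\in F$ because $F$ is an up-set under $\lesssim$ and a union of $\epsilon_F$-classes. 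I should be slightly careful with the case $x=1$: there $1\cdot D(a)=D(a)$ and $1\cdot a=a$, and $D(a)\sim a$ holds in the derived $1$-stack by the same argument, so the equivalence of definedness persists.

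Next, for $\overline{x}^F$ in this common domain, I must show $\phi^F_{D(a)}(\overline{x}^F)=\overline{x\cdot D(a)}^F=\overline{x}^F$, i.e.\ $(x,\,x\cdot D(a))\in\epsilon_F$. The natural witness is an element of $F$ that collapses $x$ and $x\cdot D(a)$ under $\circ$-premultiplication. Since $xa\in F$ and $D(xa)=D(x\cdot D(a))$ with $xa\sim D(xa)$, we have $D(xa)\in F$; and because $D(xa)\circ x = D(xa)x$ in the derived $1$-stack, using $xD(a)=D(xa)x$ (Lemma \ref{lem:extralaws} / the left restriction law $xD(y)=D(xy)x$) we get $D(xa)\circ x = x\cdot D(a) = D(xa)\circ(x\cdot D(a))$ (the last equality because $D(xa)=D(x\cdot D(a))$, so $D(xa)\circ(xD(a)) = D(xD(a))(xD(a)) = xD(a)$). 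Hence $D(xa)\in F$ witnesses $(x, x\cdot D(a))\in\epsilon_F$, giving $\phi^F_{D(a)}(\overline{x}^F)=\overline{x}^F$ as required; when $x=1$ the same computation goes through with $1$ formally removed.

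Assembling: on each patch $A_F$, $\Phi(D(a))=\phi^F_{D(a)}$ is the identity on its domain, which equals $\dom(\phi^F_a)=\dom(\Phi(a)|_{A_F})$; taking the disjoint union over $F\in{\mc F}$ shows $\Phi(D(a))$ is the restriction of $\mathrm{id}_X$ to $\dom(\Phi(a))$, which is precisely $D(\Phi(a))$ computed in $PT(X)$. I expect the main obstacle to be bookkeeping rather than anything deep: keeping straight the interplay between $D$, the derived operation $\circ$, the two quasiorders $\lesssim$ and $\leq$, and the adjoined identity $1$, and in particular verifying cleanly that $D(s)\sim s$ in the derived $1$-stack (so that membership in filters is insensitive to passing between $s$ and $D(s)$). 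Once that single fact and the law $xD(y)=D(xy)x$ are in hand, everything else is a short computation.
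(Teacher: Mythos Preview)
Your proposal is correct and follows essentially the same route as the paper: equality of domains via $xD(a)=D(xa)x=(xa)\circ x$ together with $(xa)\circ x\sim xa$, and then $(x,\,xD(a))\in\epsilon_F$ via an explicit witness in $F$. The paper takes $xa$ itself as the witness whereas you take $D(xa)$; since $D(xa)\sim xa$ and $s\circ t=D(s)t$ in the derived $1$-stack, these choices are interchangeable, and the $x=1$ case is treated with the same light touch in both.
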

\begin{proof}  
Now $\ol{x}_F\in \dom(\Phi(D(a)))$ if and only if $xD(a)\in F$, that is, $D(xa)x\in F$, or $(xa)\circ x\in F$, that is, $xa\in F$, or $\ol{x}^F\in \dom\Phi(a)$.  For such $\ol{x}^F$,  note that $(xa)\circ x=D(xa)x=xD(a)=D(xD(a))xD(a)=D(xa)xD(a)=(xa)\circ (xD(a))$, so $(x,xD(a))\in \epsilon_F$.  Hence $\Phi(D(a))(\ol{x}^F)=\ol{xD(a)}^F=\ol{x}^F$, and so $\Phi(D(a))$ is the identity function on $\dom(\Phi(a))$, that is,  $\Phi(D(a))=D(\Phi(a))$.
\end{proof}

It now follows that we can obtain further axiomatisations of algebras of partial functions, one for each of those considered so far, in which $D$ is part of the signature.  To obtain the new axioms, simply include the left restriction semigroup axioms and interpret every occurrence of ``$s\circ t$" in an axiom  as ``$D(s)t$".  In this way, we immediately recover the familiar fact that left restriction semigroups axiomatise algebras of partial functions under composition and domain, and can go on to recover some previously known axiomatisations as well as to uncover some new ones.

Here is a summary of the results applied to enriched left restriction semigroups in the way just described.  In each case, axioms follow via the axioms for the relevant class of enriched 1-stacks given in Section \ref{sec:axioms}, modified to accommodate $D$ as above, and using the relevant part of Theorem~\ref{corstack} as well as Proposition \ref{lrs} above.

\ben
\item Signature $(\cdot,D,\cap)$.  The resulting variety of enriched left restriction semigroups was first axiomatised in \cite{agreeable} and \cite{dudtro}.  
\item Signature $(\cdot,D,-)$.  These have not previously been considered.  Note that the class of minus-semigroups is a proper quasivariety, as noted earlier.  When we introduce left restriction semigroup structure to induce the right normal band structure, then adding an identity element $1$ with respect to the composition operation, we obtain the class of modal restriction semigroups in which $A(x)=1-x$, as in \cite{modrest}.  These form a proper quasivariety, as shown there, so the current class must also.
\item Signature $(\cdot,D,-,\cap)$.  Not previously considered, but a variety, following Proposition \ref{minusintvar}.  With an identity element under composition added, we obtain modal restriction semigroups with intersection as in \cite{modrest}.
\item Signature $(\cdot,D,-,\sqcup)$.  Again, not previously considered, but yields a finitely based variety, as did $(\cdot,\circ,-,\sqcup)$.  With an identity element under composition added, we obtain modal restriction semigroups with preferential union as in \cite{modrest}.
\item Signature $(\cdot,D,-,\diamond)$.  This is a new signature not previously considered, even with an identity element present, and is a finitely based variety due to Proposition \ref{minusupvar}.  With a composition identity added, the resulting class of modal restriction semigroups is a new one enriched with the addition of update. 
\item Signature $(\cdot,D,\cap,\sqcup)$.  This is a signature not previously considered, again a finitely based variety.  Since minus is not present, adding a composition identity does not give a class of enriched modal restriction semigroups.  
\item Signature $(\cdot,D,\cap,\diamond)$.  Again, a new signature.  We are uncertain whether it is a variety.
\een

\section{Open questions}  \label{sec:open}

This work considers signatures that can express domain restriction of functions $\circ$.  As already noted, Leech showed in \cite{LeechNSL} that the functional algebras of signature $(\circ,\sqcup)$, or equivalently $(\circ,\sqcup,\diamond)$, are finitely axiomatised as the variety of right handed strongly distributive skew lattices.  However, despite the fact that the concept of a prime filter as in Subsection \ref{intover} makes sense in this setting, our representation method as in Proposition \ref{main} does not seem to represent $\sqcup$ correctly when ${\mc F}$ is chosen to be the prime filters.  Hence the extended version of this signature in which composition is present remains unaxiomatised.  

We showed in Proposition \ref{updatecapqv} that the signature $(\circ,\diamond,\cap)$, with and without composition added, gives a properly quasiequational axiomatisation.  However, although it seems likely, we are yet to confirm that the class of functional algebras of signature  $(\cdot,D,-,\diamond)$ is a proper quasivariety.  (The example in the proof of Proposition \ref{updatecapqv} does not readily adapt to this setting.)


The set $P(X,Y)$ has a relational generalisation $R(X,Y)$, consisting of all binary relations that are subsets of the power set of $X\times Y$.  The various operations described in this work all generalise to $R(X,Y)$ (within which $P(X,Y)$ is a subalgebra).  There is interest in axiomatising algebras of binary relations under these operations.  It is noted in Remark $3.5$ of \cite{jacstoverup} that for signatures chosen from the operations $(\circ,\sqcup,-,\diamond)$, up to isomorphism, the relational models are precisely the same as the functional ones.  Hence the algebras of binary relations under such signatures have the same axiomatisations as the algebras of functions of the same signature.  However, when adding in either intersection or composition to these signatures, this is no longer the case in general.  There is interest in whether finite axiomatisations exist for such signatures, and if so what they are.

\newpage
\section{Supplementary material}\label{sec:supp}
There are many more simplifications to axioms that can be obtained, mostly with the aid of {\em Prover9/Mace4}, but as these distract somewhat from the flow of the main results of the article, and are often mechanical in nature, we collect them here as supplementary material.  Proofs obtained mechanically are omitted, but can be easily verified using {\em Prover9/Mace4} by entering the given laws and testing for derivation of the relevant axioms obtained in the main body of the paper.

In Proposition \ref{nicemeet} we gave a simplification of the axioms in Subsection \ref{cap} for right normal bands with intersection.  Using {\em Prover9} and {\em Mace4}, it is straightforward to verify the following significant trimming of the eight equations implicit in the definition of right normal bands with intersection.
\begin{pro}  \label{capslick}
An irredundant equational axiomatisation for the class of right normal bands with intersection is the following.
\bi
\item $(x\circ y)\circ z=(y\circ x)\circ z$
\item $x\cap x=x$
\item $x\cap y=y\cap x$
\item $(x\cap y)\circ x=x\cap y$ 
\item $x\circ (y\cap z)=(x\circ y)\cap z$
\ei
\end{pro}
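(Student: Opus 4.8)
The plan is to combine one short hand calculation with mechanical verification, since the claim is exactly the sort of equational statement that {\em Prover9} and {\em Mace4} dispose of quickly. The first observation is that the five displayed laws are all \emph{among} the eight laws implicit in the original definition of a right normal band with intersection (namely the right normal law for $\circ$, idempotence and commutativity of $\cap$, the law $(x\cap y)\circ x=x\cap y$, and the law $x\circ(y\cap z)=(x\circ y)\cap z$). Hence soundness is free: any right normal band with intersection satisfies them. What has to be shown is the converse, that the five laws entail the three missing laws --- idempotence of $\circ$, associativity of $\circ$, and associativity of $\cap$ --- together with irredundancy.

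Idempotence of $\circ$ is immediate: substituting $y:=x$ into $(x\cap y)\circ x=x\cap y$ and applying $x\cap x=x$ yields $x\circ x=x$. From this and the two linking laws one derives a handful of convenient consequences --- for example $x\circ y=x\circ(y\cap y)=(x\circ y)\cap y$, and then $x\circ(y\circ z)=x\circ\bigl(z\cap(y\circ z)\bigr)=(x\circ z)\cap(y\circ z)$ --- from which associativity of $\circ$ follows after a few more rewriting steps. Once associativity of $\circ$ is available (together with idempotence and the right normal law), the relation $a\le b\iff a=a\circ b$ is a partial order, and the two linking laws show that $x\cap y$ is the greatest lower bound of $x$ and $y$ in this order; associativity of $\cap$ is then automatic, since a binary operation computing meets in a poset is necessarily associative. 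In practice I would let {\em Prover9} supply the intermediate derivation of associativity of $\circ$; a reader can check the whole implication by entering the five laws and asking {\em Prover9} to derive the three remaining ones.

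For irredundancy I would, for each of the five laws separately, produce (with {\em Mace4}) a small finite algebra $(A,\circ,\cap)$, expected to have only two to four elements, that satisfies the other four laws but fails the chosen one. Exhibiting these five models shows that no proper subset of the five laws axiomatises the class.

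The main obstacle is the symbolic derivation of associativity of $\circ$ from the five laws. The tempting shortcut --- a ``right distributivity'' identity $(x\cap y)\circ z=(x\circ z)\cap(y\circ z)$ --- is \emph{not} sound for partial functions, because the left-hand side additionally requires $x$ and $y$ to agree where both are defined, so it is unavailable as a lemma. The derivation must instead route through the genuine intersection operation, via identities such as $x\circ y=(x\circ y)\cap y$ and $x\circ(y\circ z)=(x\circ z)\cap(y\circ z)$, and it is this step that I would delegate to {\em Prover9} rather than carry out by hand.
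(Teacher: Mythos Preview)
Your proposal is correct and follows essentially the same approach as the paper: the paper simply states that the claim is verified mechanically by {\em Prover9} (for the derivability of the missing laws) and {\em Mace4} (for irredundancy), without giving any hand derivations at all. Your write-up goes slightly beyond this by sketching the easy derivation of $x\circ x=x$ and the identity $x\circ(y\circ z)=(x\circ z)\cap(y\circ z)$, and by outlining the meet argument for associativity of $\cap$ (which is the content of Proposition~\ref{nicemeet} in the paper), but the overall strategy---delegate the nontrivial associativity-of-$\circ$ step to {\em Prover9} and the five irredundancy models to {\em Mace4}---is exactly what the paper does.
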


Use of {\em Prover9} establishes that many of the laws  for minus-algebras given in Subsection \ref{subsec:minus} are redundant.  We omit the rather long human proof, and instead present a sufficient irredundant set of axioms in the next result. 

\begin{pro}  \label{slickminus}
The axioms for minus-algebras are equivalent to the following ones.
\bi
\item $(x\circ y)\circ z=(y\circ x)\circ z$
\item $x\circ y=y-(y-x)$
\item $0\circ x=0$
\item $(x-y)\circ z=(x\circ z)-y$
\item $s-x=t-x\And x\circ s =x\circ t\Rightarrow s=t$.
\ei
\end{pro}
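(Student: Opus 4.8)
The plan is to prove the two implications separately, with essentially all the work in one of them. For the easy direction, note that each of the five listed laws is either one of the eight defining laws of a minus-algebra verbatim --- the swap law $(x\circ y)\circ z=(y\circ x)\circ z$ (a conjunct of the right normal band axioms), the definition $x\circ y=y-(y-x)$, the distribution law $(x-y)\circ z=(x\circ z)-y$, and the cancellation quasiequation --- or an immediate weakening of one of them (the law $0\circ x=0$ being half of $x\circ 0=0\circ x=0$). Hence every minus-algebra satisfies the five laws, and the real content is the converse.

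So suppose $(A,\circ,-,0)$ satisfies the five laws; I must recover idempotency and associativity of $\circ$ (the remaining right normal band axioms), $x-x=0$, $x\circ 0=0$, $(x-y)\circ x=x-y$, and $(x-y)\circ y=0$. My first move would be to manufacture a workhorse identity by composing the definition of $\circ$ with the distribution law,
\[
(x\circ y)\circ z=(y-(y-x))\circ z=(y\circ z)-(y-x),
\]
and then to feed this into the swap law to obtain the symmetric exchange identity
\[
(y\circ z)-(y-x)=(x\circ z)-(x-y)\qquad\text{for all }x,y,z.
\]
This identity, together with the distribution law, is expected to carry most of the subsequent equational manipulation.

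The crux is the bootstrapping. With neither idempotency nor associativity available at the start, the obvious rewrites stall, and the only non-equational tool --- the cancellation quasiequation ``$s-x=t-x$ and $x\circ s=x\circ t$ imply $s=t$'' --- must be applied to precisely chosen instances whose two hypotheses can be produced from the few laws already in hand. I would target idempotency $x\circ x=x$ first, instantiating the exchange and distribution laws so that the quasiequation forces it. Once idempotency is available, the workhorse identity with $y=x$ reads $x\circ z=(x\circ z)-(x-x)$, which isolates the behaviour of $0$; combined with the distribution law (applied with $x-x$ in the subtrahend slot) this should deliver $x-x=0$ and $(x\circ z)-x=0$, the latter being $(x-y)\circ y=0$ after renaming. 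Associativity, $x\circ 0=0$, and $(x-y)\circ x=x-y$ would then follow from the exchange identity plus one or two further appeals to the quasiequation.

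I expect this bootstrapping step to be the genuine obstacle: it is the point at which the argument is non-modular, depends on guessing the right substitutions for a quasiequational law, and is precisely what the paper delegates to \emph{Prover9}. In practice I would first verify the converse mechanically --- entering the five laws and checking that each of the six missing consequences is derivable --- and reconstruct the human-readable chain of lemmas in the order above only if a by-hand proof is wanted; since, unlike Proposition~\ref{capslick}, no irredundancy is asserted here, no \emph{Mace4} models are required.
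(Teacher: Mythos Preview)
Your proposal is correct and aligns with the paper's own treatment: the paper offers no human proof at all, explicitly stating that the derivations are obtained mechanically with \emph{Prover9/Mace4} and omitted, so your plan to check the easy direction by inspection and the hard direction by mechanical verification is exactly what the paper does. Your additional sketch of a possible human-readable bootstrapping argument (the workhorse/exchange identities, then idempotency via the quasiequation) goes beyond what the paper provides; one small caveat is that the text immediately preceding the proposition does describe the axiom set as \emph{irredundant}, so \emph{Mace4} models would in fact be needed to fully match the paper's claim, even though the proposition statement itself asserts only equivalence.
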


We note that if one uses the streamlined axioms for minus-algebras given in Proposition \ref{slickminus}, just adding the two for override as in Subsection \ref{sub:intover}, and replacing the quasiequation for minus-algebras by the equational law $x=(y\circ x)\sqcup (x-y)$, we find (at length) that the final right normal band law is redundant.   This gives the following.

\begin{pro}  \label{minusovertrick} 
The class of minus-algebras with override may be axiomatised as follows.
\bi
\item $x\circ y=y-(y-x)$ 
\item $0\circ x=0$  
\item $(x-y)\circ z=(x\circ z)-y$
\item $x=(y\circ x)\sqcup(x-y)$
\item $(x\sqcup y)-x=y-x$
\item $x\circ (x\sqcup y)=x$
\ei
\end{pro}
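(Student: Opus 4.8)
The plan has two parts, soundness and completeness, with essentially all of the work in the latter. Soundness is bookkeeping: $x\circ y=y-(y-x)$, $0\circ x=0$ and $(x-y)\circ z=(x\circ z)-y$ are among the minus-algebra laws of Subsection~\ref{subsec:minus}; $(x\sqcup y)-x=y-x$ and $x\circ(x\sqcup y)=x$ are the two override laws of Subsection~\ref{sub:intover}; and $x=(y\circ x)\sqcup(x-y)$ was shown sound in the earlier proposition asserting that minus-algebras with override form a finitely based variety. Hence all six laws hold on $P(X,Y)$. For completeness I would first reduce to a single equational derivation. By Proposition~\ref{slickminus} a minus-algebra is precisely an algebra satisfying $(x\circ y)\circ z=(y\circ x)\circ z$, $x\circ y=y-(y-x)$, $0\circ x=0$, $(x-y)\circ z=(x\circ z)-y$ and the minus-algebra quasiequation; adjoining the two override laws then gives minus-algebras with override; and the quasiequation is itself a consequence of $x=(y\circ x)\sqcup(x-y)$ alone, exactly via the computation $s=(x\circ s)\sqcup(s-x)=(x\circ t)\sqcup(t-x)=t$ from the proof of that earlier proposition. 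Consequently the class of minus-algebras with override is axiomatised by the six displayed laws \emph{together with} the right normal band law $(x\circ y)\circ z=(y\circ x)\circ z$, so it suffices to prove that this last law is already a consequence of the six.

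For the derivation itself I would proceed by a bootstrap followed by one main calculation. In the bootstrap, working purely equationally with $x=(y\circ x)\sqcup(x-y)$ (equivalently the quasiequation it yields) as the only cancellation tool, I would build up the usual stock of minus-algebra identities: $x-x=0$, $x-0=x$, $x\circ 0=0$, the behaviour of $0$ under $\sqcup$, the ``restriction is a subobject'' facts $(x-y)\circ x=x-y$, $(x-y)\circ y=0$, $(x\circ y)\circ x=x\circ y$, idempotency $x\circ x=x$ (from $x=(x\circ x)\sqcup(x-x)$ by collapsing the override using $x\circ(x\sqcup y)=x$ and the preceding facts), $x\sqcup x=x$, and the rewriting laws $(a-b)-c=(a-c)-b$ and $a-b=a-(b\circ a)$; the precise order in which these become available is delicate, with several near-circular dependencies, and this is where most of the length sits. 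For the main calculation, I would use $x\circ y=y-(y-x)$ to rewrite the inner product and $(x-y)\circ z=(x\circ z)-y$ to push the outer restriction inwards, reducing $(x\circ y)\circ z$ to $(y\circ z)-(y-x)$ and $(y\circ x)\circ z$ to $(x\circ z)-(x-y)$; it then remains only to prove $(y\circ z)-(y-x)=(x\circ z)-(x-y)$, which I would obtain by a further equational reduction bringing both sides to a common form via the bootstrapped minus-algebra identities.

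The hard part is precisely this derivation, and it is hard for a structural reason: the six laws contain \emph{no} associativity and \emph{no} commutativity of $\circ$ at all, so the entire right normal band structure of $(A,\circ)$ must be manufactured out of the interaction of $(x-y)\circ z=(x\circ z)-y$ with the two override laws, with $x=(y\circ x)\sqcup(x-y)$ supplying cancellation. The two points where this bites are (i) getting $x-x=0$ and the behaviour of $0$ under $\sqcup$ off the ground, where the naive manipulations loop back on themselves, and (ii) the closing identity $(y\circ z)-(y-x)=(x\circ z)-(x-y)$, which is exactly where commutativity of domain intersection is concealed. For this reason I expect a fully human-readable derivation to be long and not especially illuminating; I would present the six-law list together with the soundness check and the reduction above, and otherwise defer the remaining equational bookkeeping to mechanical verification with \emph{Prover9}, in keeping with the treatment elsewhere in this supplementary section.
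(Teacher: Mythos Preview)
Your proposal is correct and follows essentially the same route as the paper: reduce to the streamlined axioms of Proposition~\ref{slickminus}, replace the quasiequation by $x=(y\circ x)\sqcup(x-y)$ using the earlier variety result, and then observe that the remaining right normal band law $(x\circ y)\circ z=(y\circ x)\circ z$ is derivable from the six displayed laws, with the actual derivation deferred to \emph{Prover9}. The paper does exactly this (indeed, it says only ``we find (at length) that the final right normal band law is redundant'' and gives no human proof), so your added sketch of a possible hand bootstrap goes slightly beyond what the paper provides; one minor slip is that the two override laws come from Subsection~\ref{sub:minusover}, not~\ref{sub:intover}.
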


By eliminating $\circ$ using the first law, we obtain five laws, the same number given in \cite{minusover}, the latter an axiomatisation shown equationally complete there and subsequently shown complete in~\cite{CLS}.  
Introducing an operation modelling composition as in Subsection \ref{sub:minusover}, {\em Prover9} shows that associativity of composition is redundant, so only the law $x(y-z)=(xy)-(xz)$ need be added.  

The axioms of Proposition \ref{overplusminus} for minus-algebras with override, built from those for right handed strongly distributive skew lattices, also contain redundancies.  With the aid of {\em Prover9} we obtain the following simplification in which 14 axioms are reduced to five.

\begin{pro}  \label{minusovertrick2} 
The class of minus-algebras with override may be irredudantly axiomatised as follows.
\bi
\item $x\circ (y\sqcup z)=(x\circ y)\sqcup (x\circ z)$
\item $(x\sqcup y)\circ z=(x\circ z)\sqcup (y\circ z)$
\item $x\circ (x\sqcup y)=x$
\item $(x-y)\circ y=0$
\item $(y\circ x)\sqcup(x-y)=x$
\ei
\end{pro}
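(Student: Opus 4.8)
The plan is to reduce the statement to an axiomatisation of minus-algebras with override that has already been established in the excerpt, and then to dispatch irredundancy with one small counter-model per law. For the easy (forward) direction there is essentially nothing to do: by Proposition~\ref{overplusminus} together with Corollary~\ref{coralgebra}(3), the class of minus-algebras with override is precisely the class of right-handed strongly distributive skew lattices carrying an operation $-$ that satisfies $0\circ x=0$, $(x-y)\circ y=0$ and $(y\circ x)\sqcup(x-y)=x$; among the fourteen laws packaged in that description one finds both distributive laws (our first two), the absorption law $x\circ(x\sqcup y)=x$ (our third), and the two minus laws $(x-y)\circ y=0$ and $(y\circ x)\sqcup(x-y)=x$ (our fourth and fifth). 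Hence every minus-algebra with override satisfies the five displayed laws.

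The substance is the converse. I would fix an algebra $(A,\circ,\sqcup,-,0)$ satisfying the five laws and show it satisfies all fourteen of the laws above, for then Proposition~\ref{overplusminus} together with Corollary~\ref{coralgebra}(3) identifies it as a minus-algebra with override and we are done. Concretely, the nine laws still to be deduced are: associativity and idempotence of $\circ$; the right-normal-band law $x\circ y\circ x=y\circ x$; associativity and idempotence of $\sqcup$; the three remaining absorption laws $(y\sqcup x)\circ x=x$, $x\sqcup(x\circ y)=x$ and $(y\circ x)\sqcup x=x$; and $0\circ x=0$. My first move would be to mine the fifth law: specialising $y:=x$ gives $x=(x\circ x)\sqcup(x-x)$, and playing this off against $(x-x)\circ x=0$ (the fourth law) and the two distributive laws should force $\circ$ to be idempotent and exhibit $x-x$, hence $0$, as a two-sided zero for $\circ$. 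Once idempotence of $\circ$ is available, the standard absorption/distributivity chase familiar from the theory of strongly distributive skew lattices delivers the remaining band, absorption and right-normal-band laws. In keeping with the conventions of this supplementary section I would discharge the chase with \emph{Prover9}: feeding it the five laws and requesting each of the fourteen in turn produces the derivations, which are then easy to re-verify.

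For irredundancy, the plan is to produce, for each of the five laws, a finite algebra in the signature $(\circ,\sqcup,-,0)$ that satisfies the other four laws but violates the chosen one; \emph{Mace4} locates such models automatically, and the smallest are expected to have only a handful of elements. These five counter-models together show that no law is a consequence of the remaining four, so the list is irredundant.

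The hard part will be the converse derivation, and within it the very first step: none of the five laws asserts idempotence of either operation, and $0$ is pinned down only through the fourth law, so one must bootstrap the entire strongly-distributive-skew-lattice-with-zero structure out of absorption, distributivity and the two minus laws by a non-obvious sequence of substitutions --- which is exactly the situation in which the work is handed to \emph{Prover9} rather than ground out by hand.
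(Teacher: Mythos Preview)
Your proposal is correct and matches the paper's own approach: the paper also obtains this result mechanically, stating that the fourteen-law axiomatisation of Proposition~\ref{overplusminus} is reduced to five ``with the aid of \emph{Prover9}'', with irredundancy checked by \emph{Mace4}, and (as noted at the start of Section~\ref{sec:supp}) omitting the machine-generated derivations. Your identification of which nine laws remain to be derived, and of the bootstrapping of idempotence and the zero as the delicate step, is a reasonable gloss on what the automated prover must accomplish.
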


For the signature of domain restriction and override, starting with Leech's axioms for right handed strongly distributive skew lattices, {\em Prover9/Mace4} showed that the following somewhat less impressive paring back of axioms is possible.

\begin{pro}  \label{slickskew}
For the algebras of functions of signature $(\circ,\sqcup)$, the following axiomatisation is irredundant.
\bi
\item $(x\circ y)\circ x=y\circ x$
\item $(x\sqcup y)\sqcup z=x\sqcup (y\sqcup z)$
\item $x\circ (x\sqcup y)=x$
\item $(y\sqcup x)\circ x=x$
\item $x\circ(y\sqcup z)=(x\circ y)\sqcup (x\circ z)$
\item $(x\sqcup y)\circ z=(x\circ z)\sqcup (y\circ z)$
\ei
\end{pro}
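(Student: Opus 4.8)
The plan is built around Proposition~\ref{Leechax}, which identifies the algebras of functions of signature $(\circ,\sqcup)$ with the right handed strongly distributive skew lattices of Subsection~\ref{overonly}. So the proof splits into three tasks: verify that each of the six displayed laws is sound for partial functions; verify that the six jointly imply Leech's defining laws (completeness); and verify that deleting any one of the six destroys completeness (irredundancy). The first and third tasks are, respectively, routine and mechanical; the second is where the content lies.

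\emph{Soundness.} Recall that on $P(X,Y)$ under domain restriction, $f\circ g$ is $g$ cut down to $\dom(f)$. Then (1) holds because $(f\circ g)\circ f$ and $g\circ f$ are both $f$ cut down to $\dom(f)\cap\dom(g)$; (2) is associativity of preferential union; (3) holds because $f\sqcup g$ agrees with $f$ on $\dom(f)$; (4) holds because $(g\sqcup f)\circ f$ is $f$ cut down to a domain containing $\dom(f)$, hence equals $f$; and (5),(6) are the two distributive laws already checked in Subsection~\ref{overonly}. In particular every right handed strongly distributive skew lattice satisfies all six laws, so the reverse implication needed for completeness is free.

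\emph{Completeness.} Laws (3),(5),(6) are verbatim among Leech's axioms, and (2) is half of ``$(A,\sqcup)$ is a band'', so what must still be derived from the six is idempotency of $\circ$ and of $\sqcup$, associativity of $\circ$, the swap law $(x\circ y)\circ z=(y\circ x)\circ z$, and the second absorption law $x\sqcup(x\circ y)=x=(y\circ x)\sqcup x$. The idempotency facts come out quickly. Put $a=x\circ x$. Law~(1) with $y:=x$ gives $a\circ x=a$. Laws~(3) and~(5) give $a\sqcup(x\circ y)=(x\circ x)\sqcup(x\circ y)=x\circ(x\sqcup y)=x$ for every $y$, so taking $y:=x$ yields $a\sqcup a=x$. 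Then by law~(6), $x\circ x=(a\sqcup a)\circ(a\sqcup a)=(a\circ(a\sqcup a))\sqcup(a\circ(a\sqcup a))=(a\circ x)\sqcup(a\circ x)=a\sqcup a=x$; since $x\circ x=a$ by definition this forces $a=x$, i.e.\ $x\circ x=x$, and therefore also $x\sqcup x=a\sqcup a=x$. The second absorption law is then immediate: $x\sqcup(x\circ y)=(x\circ x)\sqcup(x\circ y)=x\circ(x\sqcup y)=x$ by (5) and (3), and dually $(y\circ x)\sqcup x=(y\circ x)\sqcup(x\circ x)=(y\sqcup x)\circ x=x$ by (6) and (4). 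What remains is the purely $\circ$-theoretic statement that, in the presence of the other laws, law~(1) forces $\circ$ to be an associative right normal operation; this I would confirm with \emph{Prover9}, and I expect it to be the main obstacle to a fully hand-written proof, since law~(1) as written contains no associativity to begin with, so the argument must manufacture the needed reassociations from the distributive and absorption laws.

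\emph{Irredundancy.} For each of the six laws I would produce, with the help of \emph{Mace4}, a finite $(\circ,\sqcup)$-algebra satisfying the other five but violating the chosen one. For (1) and (2) the remaining five laws do not force the relevant idempotency (the derivation above used (1), and (2) does not itself encode any idempotency), so these witnesses will typically be genuinely non-idempotent, respectively non-associative, semigroups; for (3)--(6) the witnesses may be built on idempotent carriers. Assembling these six mutually independent small models is mechanical but is the only step beyond the completeness argument that needs any care, and once it is done the proposition follows.
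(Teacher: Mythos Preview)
Your proposal is correct and takes essentially the same approach as the paper, which simply defers the entire verification to \emph{Prover9/Mace4} without any hand derivations. Your explicit arguments for idempotency of $\circ$ and $\sqcup$ and for the second pair of absorption laws are a nice addition beyond what the paper provides; the remaining deferrals (associativity of $\circ$ and the six irredundancy witnesses) match the paper's stated reliance on the software.
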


In Example \ref{eg:droi} of the main text, we showed that a possibly simpler set of laws for the signature $\{\circ,\sqcup,\cap\}$ was not complete, and subsequently observed the required extra laws: add to the laws for right-handed strongly distributive skew lattices all of the laws for right normal bands with intersection and override as in Subsection \ref{sub:intover}.  It was noted that these laws would contain considerable redundancy including repetitions, and indeed {\em Prover9/Mace4} shows that of these additional laws added to those for right-handed strongly distributive skew lattices, it is sufficient to add only the law shown to fail in Example \ref{eg:droi}; moreover the laws for right-handed strongly distributive skew lattice given in Proposition \ref{slickskew} also simplify.

\begin{pro}
The class of right normal bands with intersection and override may be axiomatised as those algebras $(A,\circ,\sqcup,\cap)$ such that:
\bi
\item $x\circ (x\sqcup y)=x$
\item $(y\sqcup x)\circ x=x$
\item $x\circ(y\sqcup z)=(x\circ y)\sqcup (x\circ z)$
\item $(x\sqcup y)\circ z=(x\circ z)\sqcup (y\circ z)$
\item $x=x\circ y\And x=x\circ z \Leftrightarrow x=x\circ (y\cap z)$
\item $((x\sqcup y)\cap y)\sqcup x=x\sqcup y$
\ei
\end{pro}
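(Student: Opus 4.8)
The plan is to show that the six displayed laws are equivalent to the defining laws of right normal bands with intersection and override from Subsection~\ref{sub:intover}; combined with part~(6) of Corollary~\ref{coralgebra}, which identifies that class with the algebras of partial functions of signature $(\circ,\cap,\sqcup)$, this gives the proposition.

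One direction is the assertion that every right normal band with intersection and override satisfies the six laws. Laws~1, 4 and~6 are, respectively, the second, fourth and third axioms of Subsection~\ref{sub:intover} verbatim; law~5 is precisely the biconditional characterisation of $\cap$ as the meet under the natural order supplied by Proposition~\ref{nicemeet}; and law~3 (left distributivity of $\circ$ over $\sqcup$) together with law~2, namely $(y\sqcup x)\circ x=x$, are not listed there but are both sound for partial functions --- for law~2 because $\dom(x)\subseteq\dom(y\sqcup x)$ --- hence hold throughout the class by Corollary~\ref{coralgebra}(6). So this direction I would dispatch by a short soundness check.

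The substantive direction is to derive every axiom of Subsection~\ref{sub:intover} from the six laws. The three override axioms (laws~1, 4 and~6) and the content of Proposition~\ref{nicemeet} (law~5) are already in hand, so the real task is to recover the underlying right normal band structure --- idempotence and associativity of $\circ$ together with $(x\circ y)\circ z=(y\circ x)\circ z$ --- from the absorption and distributivity laws~1--4 and~6 alone, and then to invoke Proposition~\ref{nicemeet} to upgrade this to a right normal band with intersection. Bootstrapping the band axioms in this way is the step I expect to be the main obstacle: none of idempotence, $(x\circ y)\circ z=(y\circ x)\circ z$, or associativity of $\circ$ is visibly present among laws~1--6, and the derivations must chain through $\sqcup$ in a way that is awkward to carry out by hand.

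This is exactly the kind of equational search that {\em Prover9} handles well, so following the convention of this supplementary section I would run {\em Prover9} to obtain the derivation of the Subsection~\ref{sub:intover} axioms from the six laws and {\em Mace4} to confirm that the six-law set is irredundant, recording only the resulting axiomatisation and leaving the routine (if lengthy) human transcription of the machine proof to the interested reader.
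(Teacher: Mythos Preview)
Your proposal is correct and matches the paper's approach: the paper likewise obtains this axiomatisation via {\em Prover9/Mace4}, starting from the combined laws for right-handed strongly distributive skew lattices and for right normal bands with intersection and override, then paring down to the six listed laws, with the soundness direction handled exactly as you describe via the functional representation (Corollary~\ref{coralgebra}(6) and Proposition~\ref{nicemeet}). One small note: the proposition as stated does not claim irredundancy, so the {\em Mace4} check you mention is not required here (it is the following Proposition~\ref{bestoverint} that records an irredundant set).
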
  

Starting instead with the axioms given in Subsection \ref{sub:intover}, {\em Prover9/Mace4} revealed that the following set of axioms is minimally complete; note that it contains none of the right normal band laws.  

\begin{pro}  \label{bestoverint}
For the class of right normal bands with intersection and override, the following axiomatisation is irredundant.
\bi
\item $x\cap y=y\cap x$
\item $x\circ (y\cap z)=(x\circ y)\cap z$
\item $x\circ(x\sqcup y)=x$
\item $((x\sqcup y)\cap y)\sqcup x=x\sqcup y$
\item $(x\sqcup y)\circ z=(x\circ z)\sqcup (y\circ z)$
\ei
\end{pro}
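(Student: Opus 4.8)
The plan has three parts: soundness, completeness, and irredundancy.

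For soundness, I would unpack the definition of a right normal band with intersection and override from Subsection~\ref{sub:intover} into its primitive laws: the band and right normal laws for $\circ$, the semilattice laws for $\cap$, the two mixed laws $(x\cap y)\circ x=x\cap y$ and $x\circ(y\cap z)=(x\circ y)\cap z$, and laws (2)--(4) of the definition. Each of the five laws in the statement occurs verbatim among these: $x\cap y=y\cap x$ is commutativity of the semilattice $(A,\cap)$; $x\circ(y\cap z)=(x\circ y)\cap z$ is one of the right normal band with intersection laws; and $x\circ(x\sqcup y)=x$, $((x\sqcup y)\cap y)\sqcup x=x\sqcup y$, $(x\sqcup y)\circ z=(x\circ z)\sqcup(y\circ z)$ are precisely (2), (3) and (4). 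Hence every right normal band with intersection and override satisfies the five laws.

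For completeness I would argue as follows. By the sixth part of Corollary~\ref{coralgebra}, the eleven primitive laws just listed axiomatise the algebras of partial functions of signature $(\circ,\cap,\sqcup)$, and the five laws are among them; so it suffices to derive all eleven primitive laws from the five, for then the class defined by the five laws is contained in, hence (using soundness) equal to, the class of right normal bands with intersection and override. Since (2)--(4) and the two laws I have retained are already present, the task reduces to recovering idempotence of $\circ$, associativity of $\circ$, the right normal law, idempotence and associativity of $\cap$, and the absorption law $(x\cap y)\circ x=x\cap y$, and the plan is to bootstrap these in roughly that order. The delicate point is that neither $x\circ x=x$ nor $x\cap x=x$ is a hypothesis, so the idempotent and absorptive content of both operations must be squeezed out of the mixed laws; the natural device is to substitute terms of the form $x\sqcup y$ --- which $x\circ(x\sqcup y)=x$ renders ``$\circ$-absorbing on the left'' --- into the distributive law $(x\sqcup y)\circ z=(x\circ z)\sqcup(y\circ z)$ and into $((x\sqcup y)\cap y)\sqcup x=x\sqcup y$, and then to propagate the resulting identities. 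This chain of substitutions is routine but long, and I would verify it with \emph{Prover9} \cite{P9M4}, omitting the mechanical derivation in keeping with the convention of this supplementary section.

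For irredundancy I would exhibit, for each of the five laws in turn, a small finite algebra $(A,\circ,\cap,\sqcup)$ satisfying the other four laws but failing the fifth; such models are produced by \emph{Mace4} \cite{P9M4} and are easily checked directly, and their existence shows that no four of the laws imply the remaining one. I expect the completeness step to be the only real obstacle: soundness and irredundancy are essentially bookkeeping, whereas manufacturing the band, right normal and semilattice laws from a hypothesis set in which idempotence of neither operation is assumed requires exactly the right sequence of substitutions, which is where the automated search is indispensable.
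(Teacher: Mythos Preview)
Your proposal is correct and matches the paper's approach exactly: the paper likewise defers all of soundness, completeness, and irredundancy to \emph{Prover9/Mace4}, stating only that the software ``revealed that the following set of axioms is minimally complete'' when started from the axioms of Subsection~\ref{sub:intover}. Your decomposition into soundness (each law already occurs among the defining laws), completeness (derive the remaining primitive laws mechanically), and irredundancy (countermodels via \emph{Mace4}) is precisely the implicit structure behind the paper's one-line justification, and is in fact more explicit than what the paper records.
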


As discussed in the introduction, it is possible to entirely eliminate $\circ$ from the signature, instead replacing it by update, using the facts that $x\circ y=(x\diamond y)\cap y$ and $x\diamond y=x\circ (y\sqcup x)$, thereby giving an equational axiomatisation for the algebras of partial functions of signature $(\sqcup,\diamond,\cap)$.  We could do this directly for the above axioms.  However, instead it seemed of more interest to start from the axioms for override and update found in \cite{jacstoverup} and to attempt to augment them with enough of the above axioms to incorporate intersection into that axiomatisation.  Here are those axioms from \cite{jacstoverup}, proved complete there.

\bi
\item $x\sqcup (y\sqcup z)=(x\sqcup y)\sqcup z$
\item $x\sqcup x=x$
\item $x=x\diamond (x\sqcup y)$
\item $x\sqcup y=(y\diamond x)\sqcup x$
\item $(x\diamond y)\diamond z=x\diamond (z\sqcup y)$
\item $(x\sqcup y)\diamond  z=(x\diamond z)\sqcup (y\diamond z)$
\ei

We then added translated versions of the laws in Proposition \ref{bestoverint} to these, eliminating as many as possible relative to those in \cite{jacstoverup}, and only then turning to elimination of any redundant laws in the above axioms for override and update, all done using {\em Prover9/Mace4}.  The result of this was that we were only able to eliminate one of our translated laws.  We also realised that, on translation, the third law above was equivalent to the combination of the two laws $x\diamond(x\sqcup y)=x$ and $x\cap(x\sqcup y)=x$, the former of which proved redundant.  With this noted, of the above laws for override and update, most then proved redundant, leaving us with the following.

\begin{pro}
An irredundant axiomatisation for the algebras of functions of signature $(\sqcup,\diamond,\cap)$ is as follows:
\bi
\item $x=x\diamond(x\sqcup y)$
\item $(x\diamond y)\diamond z=x\diamond(z\sqcup y)$
\item $x\cap y=y\cap x$
\item $(x\diamond (y\cap z))\cap(y\cap z)=((x\diamond y)\cap y)\cap z$ 
\item $x\cap (x\sqcup y)=x$
\item $((x\sqcup y)\cap y)\sqcup x=x\sqcup y$
\item $(x\sqcup y)\circ z=(x\circ z)\sqcup (y\circ z)$
\ei
\end{pro}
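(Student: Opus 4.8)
Soundness of the seven laws is routine; for completeness I would reduce to the already-established axiomatisation of right normal bands with intersection and override, and irredundancy would be witnessed by seven small \emph{Mace4} models. Throughout, $x\circ y$ is to be read as the derived term $(x\diamond y)\cap y$, which on $P(X,Y)$ is exactly domain restriction: $x\diamond y$ is defined on $\dom(x)$ and there agrees with $y$ wherever $y$ is defined, so $(x\diamond y)\cap y$ restricts $y$ to $\dom(x)$. With this reading a pointwise check shows that each of the seven laws holds on every algebra of partial functions of signature $(\sqcup,\diamond,\cap)$, which is soundness.

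\textbf{Completeness.} Let $(A,\sqcup,\diamond,\cap)$ satisfy the seven laws and again put $x\circ y:=(x\diamond y)\cap y$. After expanding $\circ$, laws (iii), (iv), (vi) and (vii) are precisely four of the five identities of Proposition \ref{bestoverint}; the remaining one follows at once, since $x\circ(x\sqcup y)=(x\diamond(x\sqcup y))\cap(x\sqcup y)=x\cap(x\sqcup y)=x$ using laws (i) and (v). Hence $(A,\circ,\cap,\sqcup)$ is a right normal band with intersection and override, so by the sixth part of Corollary \ref{coralgebra} it embeds in some $P(X,Y)$ with $\circ$, $\cap$, $\sqcup$ represented by domain restriction, intersection and override respectively. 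It then suffices to see that $\diamond$ is also correctly represented; and since the identity $x\diamond y=x\circ(y\sqcup x)$ is sound for partial functions, with its right-hand side built only from the already-correctly-represented $\circ$ and $\sqcup$, I need only derive $x\diamond y=x\circ(y\sqcup x)$ in $A$ from the seven laws. That is a short deduction using (i), (ii) and the right normal band with intersection and override structure just obtained, and I would carry it out with \emph{Prover9}. Granting it, the embedding above faithfully represents $(A,\sqcup,\diamond,\cap)$, giving completeness.

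\textbf{Irredundancy and the main obstacle.} For each of the seven laws I would produce, via \emph{Mace4}, a finite algebra $(A,\sqcup,\diamond,\cap)$ satisfying the other six but not it. The genuinely delicate point is not any of the bookkeeping above but the choice of the seven laws themselves: Example \ref{eg:droi} already shows that the most natural candidate list is incomplete, so the real content lies in the machine confirmation that this particular lean list — which contains none of the right normal band laws outright — recovers the full right normal band with intersection and override structure (through Proposition \ref{bestoverint}) together with the defining identity $x\diamond y=x\circ(y\sqcup x)$ for update. That is where I expect the work, and the risk of a wrong guess, to lie; everything after that step is routine, and being mechanical is left to \emph{Prover9/Mace4} in the style of this section.
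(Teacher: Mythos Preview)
Your proposal is correct and follows essentially the same approach as the paper: reduce completeness to the already-established axiomatisation of right normal bands with intersection and override (Proposition~\ref{bestoverint}) via the defining term $x\circ y=(x\diamond y)\cap y$, recover $\diamond$ as $x\circ(y\sqcup x)$, and discharge the residual derivations and all irredundancy checks with \emph{Prover9/Mace4}. If anything, you articulate the logical structure more explicitly than the paper's terse description of the machine-assisted trimming process.
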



For the class of 1-stacks with intersection and override, one further axiom may be omitted, namely $x(y\circ z)=xy\circ xz$.

Next, for update and intersection, the software showed the following.  (Proving that the axiom  $(x\circ y)\circ z=(y\circ x)\circ z$ is redundant was very slow.) 

\begin{pro}
For the class of right normal bands with intersection and update, the following axiomatisation is irredundant.
\bi
\item $x\cap y=y\cap x$  
\item $(x\cap y)\circ x=x\cap y$
\item $x\circ (y\cap z)=(x\circ y)\cap z$
\item $(x\diamond y)\circ x=x$  
\item $x\diamond (x\diamond y)=x\diamond y$
\item $y\circ (x\diamond y)=x\circ y$
\item $(x\cap (x\diamond y))\circ u=(x\cap (x\diamond y))\circ v \And y\circ u=y\circ v \Rightarrow x\circ u=x\circ v$
\ei
\end{pro}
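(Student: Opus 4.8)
The plan is to prove the statement in two halves: first that the seven displayed laws axiomatise the class (i.e.\ that they are interderivable with the defining axioms of a right normal band with intersection and update from Subsection~\ref{upint}), and second that the set is irredundant. For soundness I would simply note that laws 1--3 concern intersection and its interaction with domain restriction on $P(X,Y)$, laws 4--6 are the defining properties of update recorded in Subsection~\ref{upint}, and law 7 is the implication law there already observed to be sound; hence every algebra of partial functions of signature $(\circ,\cap,\diamond)$ satisfies all seven. For the ``easy'' direction of the equivalence I would take the right-normal-band-with-intersection part of the definition in the streamlined form of Proposition~\ref{capslick}: then laws 1, 2, 3 are literally three of the five laws there, and laws 4, 5, 6, 7 are literally laws (2), (3), (4), (5) in the definition of Subsection~\ref{upint}, so nothing is left to check in this direction.

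The substantive direction is to recover, from the seven laws, the two laws of Proposition~\ref{capslick} that are not among them: idempotency of $\cap$ and the right-normal identity $(x\circ y)\circ z=(y\circ x)\circ z$ (idempotency and associativity of $\circ$, and associativity of $\cap$, then follow in the usual way---note that law 2 with $y:=x$ turns idempotency of $\cap$ into idempotency of $\circ$). First I would dispose of idempotency, commutativity and associativity of $\cap$ by short substitutions. Then, to reach the right-normal law, I would pass to the orders $a\le b\Leftrightarrow a=a\circ b$ and $a\lesssim b\Leftrightarrow b\circ a=a$, observe from law 4 that $x\lesssim x\diamond y$ and from laws 5 and 6 that $x\diamond y\lesssim x$ (so $x\diamond y$ is $\sim$-equivalent to $x$), together with $x\circ y\le y$ and $x\cap y\le x,y$, and then use a Proposition~\ref{leqcap}-style translation ($a\le b\Leftrightarrow a=a\cap b$) with law 3 to reduce the target identity to identities purely about $\circ$ and $\cap$ that hold in every right normal band with intersection. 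In practice, following the paper's methodology, one simply feeds both axiom sets to \emph{Prover9} and asks for mutual derivability.

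For irredundancy I would, for each of the seven laws in turn, exhibit a small algebra---produced by \emph{Mace4}---that satisfies the other six laws but fails the chosen one; for the quasi-identity (law 7) the algebra from the proof of Proposition~\ref{updatecapqv} already does the job, and the other six independence models are tiny.

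The hard part will be the derivation of $(x\circ y)\circ z=(y\circ x)\circ z$ from the seven laws: the displayed laws nowhere mention associativity or the right-normal identity for $\circ$, yet the full right-normal-band structure of $\circ$ must be reconstructed from them, and the authors explicitly record that this is the \emph{Prover9} run which runs very slowly. Everything else---the remaining derivations and all seven independence models---is routine bookkeeping.
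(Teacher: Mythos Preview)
Your proposal is correct and takes essentially the same approach as the paper: the paper's proof is entirely mechanical, stating only that \emph{Prover9/Mace4} established both equivalence and irredundancy (with the remark that deriving $(x\circ y)\circ z=(y\circ x)\circ z$ was very slow), and you likewise defer the actual work to \emph{Prover9} for the derivations and to \emph{Mace4} for the seven independence models. Your additional hand-sketch of how the missing right-normal-band laws might be recovered is helpful intuition but not part of the paper's argument, and your observation that the quotient in Proposition~\ref{updatecapqv} already witnesses independence of law~7 is a nice shortcut the paper does not make explicit.
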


Again, it is possible to eliminate $\circ$ entirely, giving axioms involving $\diamond,\cap$ that axiomatise algebras of partial functions of this signature. 

Finally we turn our attention to the axioms of Section \ref{sec:restriction} in the main text.  \emph{Prover9/Mace4} showed the following two results.

\begin{pro}  
The following axiomatisation is irredundant for the functional signature $(\cdot,D,\cap)$:
\bi
\item $x(yz)=(xy)z$
\item $xD(y)=D(xy)x$
\item $x\cap x=x$
\item $x\cap y=y\cap x$
\item $D(x\cap y)x=x\cap y$
\item $D(x)(y\cap z)=(D(x)y)\cap z$
\item $x(y\cap z)=(xy)\cap (xz)$
\ei 
\end{pro}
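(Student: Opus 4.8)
The plan is to establish the result in three stages — soundness, completeness, and irredundancy — leaning on the axiomatisation of the functional signature $(\cdot,D,\cap)$ already recovered in Section~\ref{sec:restriction}, item~(1): the left restriction semigroup laws together with the right normal band with intersection laws in which every occurrence of $s\circ t$ is read as $D(s)t$, which axiomatise these algebras by Theorem~\ref{corstack}(1) and Proposition~\ref{lrs} (and were originally obtained in \cite{agreeable} and \cite{dudtro}).

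Soundness is the routine part. In $PT(X)$ with $D(f)=\{(x,x)\mid x\in\dom f\}$, associativity and $xD(y)=D(xy)x$ are the familiar left restriction identities (see \cite{trok}); $\cap$ is a meet-semilattice operation, so it is idempotent and commutative; $D(x\cap y)x=x\cap y$ merely says that restricting $x$ to the smaller domain of $x\cap y$ returns $x\cap y$; $D(x)(y\cap z)=(D(x)y)\cap z$ is exactly the translation under $s\circ t\mapsto D(s)t$ of the right normal band with intersection law $x\circ(y\cap z)=(x\circ y)\cap z$; and $x(y\cap z)=xy\cap xz$ is the left distributivity of composition over intersection established in \cite{garv}. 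Hence every functional model satisfies the seven laws.

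For completeness it suffices to prove that the seven laws are equivalent to the reference axiomatisation just described; since both sides are finite and purely equational, this reduces to two finite equational deductions — deriving the left restriction semigroup laws and the translated right normal band with intersection laws from the seven, and conversely deriving the seven (most of which are instances or easy consequences) from the reference set — after which the representation recorded in Section~\ref{sec:restriction} applies verbatim. Both deductions were carried out with \emph{Prover9}; as with the other results in this section we omit the mechanical proofs, which are readily reproduced. The substantive content — the step a hand proof would find longest — is recovering $D(x)x=x$, $D(x)D(y)=D(y)D(x)$ and $D(D(x)y)=D(D(x)D(y))$ from laws in which $D$ only ever appears applied to a single variable multiplied on the left.

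Irredundancy is then established by exhibiting, for each of the seven laws, a finite algebra satisfying the other six but not that one; such separating models were produced with \emph{Mace4} and are likewise omitted. The main obstacle I expect here is irredundancy of associativity $x(yz)=(xy)z$: one must find a small non-associative magma carrying a unary $D$ and a meet $\cap$ for which $xD(y)=D(xy)x$, $D(x\cap y)x=x\cap y$, $D(x)(y\cap z)=(D(x)y)\cap z$ and $x(y\cap z)=xy\cap xz$ all hold, and — as the analogous search for the redundancy of $(x\circ y)\circ z=(y\circ x)\circ z$ elsewhere in this section suggests — such searches can be delicate and slow.
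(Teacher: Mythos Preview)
Your proposal is correct and follows essentially the same approach as the paper: the proposition sits in the supplementary section where the authors explicitly state that such results were obtained mechanically via \emph{Prover9/Mace4} with proofs omitted, and your plan of checking soundness directly, then verifying equivalence with the reference axiomatisation of Section~\ref{sec:restriction} by \emph{Prover9}, and establishing irredundancy by \emph{Mace4} separating models, is exactly what the paper does. Your added remarks about which derivations or model searches might be the slow ones are reasonable commentary but go beyond what the paper records.
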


\begin{pro}  
The following axiomatisation is irredundant for the signature $(\cdot,D,\cap,\sqcup)$:
\bi
\item $x(yz)=(xy)z$
\item $xD(y)=D(xy)x$
\item $x\cap y=y\cap x$
\item $D(x)(y\cap z)=(D(x)y)\cap z$
\item $x(y\cap z)=(xy)\cap (xz)$
\item $D(x)(x\sqcup y)=x$
\item $((x\sqcup y)\cap y)\sqcup x=x\sqcup y$
\item $D(x\sqcup y)z=(D(x)z)\sqcup D(y)z$
\ei 
\end{pro}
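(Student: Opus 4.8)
The plan is to reduce the statement to results already in hand. By the prescription at the end of Section~\ref{sec:restriction}, the algebras of partial functions of signature $(\cdot,D,\cap,\sqcup)$ are axiomatised by the union $\Sigma$ of the left restriction semigroup laws with the laws for $1$-stacks with intersection and override from Subsection~\ref{sub:intover}, every occurrence of $s\circ t$ read as $D(s)t$; completeness of $\Sigma$ follows from part~(6) of Theorem~\ref{corstack} together with Proposition~\ref{lrs}. So it suffices to prove two things: that the eight displayed laws are \emph{equivalent} to $\Sigma$, and that no one of them follows from the other seven.

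For the equivalence, the easy direction is soundness: each of the eight laws holds in $PT(X)$ under the given operations, so each holds in every functional model and hence is a consequence of $\Sigma$. For the reverse direction --- deriving all of $\Sigma$ from the eight laws --- observe that the eight laws are precisely the five laws of Proposition~\ref{bestoverint}, with $\circ$ rewritten through $x\circ y=D(x)y$ (so that $(x\sqcup y)\circ z$ becomes $D(x\sqcup y)z$), augmented by associativity of composition, by $xD(y)=D(xy)x$, and by the distributive law $x(y\cap z)=xy\cap xz$. From these one recovers $\Sigma$ by a finite (quasi)equational deduction, carried out with \emph{Prover9}: first the remaining left restriction semigroup laws $D(x)x=x$, $D(x)D(y)=D(y)D(x)$ and $D(D(x)y)=D(D(x)D(y))$; then, since the derived operation $x\circ y:=D(x)y$ is now available, the full set of right normal band with intersection and override laws for it, using that Proposition~\ref{bestoverint} is already known complete for that fragment; and finally the outstanding $1$-stack laws that mention composition.

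For irredundancy, for each of the eight laws I would use \emph{Mace4} to produce a finite algebra in the signature $(\cdot,D,\cap,\sqcup)$ that satisfies the other seven laws while failing the one in question; exhibiting all eight such algebras is exactly the content of the irredundancy assertion, and they are the small models returned by the search. That these are small is plausible because the composition-free kernel, Proposition~\ref{bestoverint}, already has small independence models, and the three extra laws --- associativity, $xD(y)=D(xy)x$, and $x(y\cap z)=xy\cap xz$ --- are each easy to violate in isolation.

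The main obstacle is the \emph{Prover9} step: extracting the entire left restriction semigroup package, and with it the correctness of the derived domain restriction $\circ=D(\cdot)\cdot$, from the comparatively terse interaction of $D$ with $\cap$ and $\sqcup$ in the eight laws is the delicate part of the argument, and is the reason this verification is best entrusted to the machine. Everything else --- soundness, and the eight independence models --- is routine.
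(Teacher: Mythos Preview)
Your proposal is correct and follows essentially the same approach as the paper: the paper explicitly defers all of these supplementary simplifications to \emph{Prover9/Mace4}, stating at the start of Section~\ref{sec:supp} that such proofs are omitted and can be verified mechanically. Your added structural observation --- that the eight laws are exactly the five of Proposition~\ref{bestoverint} translated via $x\circ y=D(x)y$, plus associativity, $xD(y)=D(xy)x$, and $x(y\cap z)=xy\cap xz$ --- is a helpful way to organise the Prover9 search, but the paper does not spell this out and simply reports the machine result.
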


\end{document}